\let\csname ver@amsthm.sty\endcsname\relax
\numberwithin{equation}{section}
\newtheorem{thm}{Theorem}[section]
\newtheorem{lemma}[thm]{Lemma}
\newtheorem{cor}[thm]{Corollary}
\newtheorem{prop}[thm]{Proposition}
\newtheorem{Definition}[thm]{Definition}
\newtheorem{Example}[thm]{Example}
\newenvironment{example}
  {\begin{Example}\rm}{\end{Example}}
\newtheorem{Remark}[thm]{Remark}
\newenvironment{remark}
  {\begin{Remark}\rm}{\end{Remark}}
  \newtheorem{Question}[thm]{Question}
\newenvironment{question}
  {\begin{Question}\rm}{\end{Question}}
    \newtheorem{Problem}[thm]{Problem}
\newenvironment{problem}
  {\begin{Problem}\rm}{\end{Problem}}
\crefname{thm}{Theorem}{Theorems}
\crefname{lemma}{Lemma}{Lemmas}
\crefname{cor}{Corollary}{Corollaries}
\crefname{prop}{Proposition}{Propositions}
\crefname{definition}{Definition}{Definitions}
\crefname{example}{Example}{Examples}
\crefname{remark}{Remark}{Remarks}
\crefname{question}{Question}{Questions}
\crefname{problem}{Problem}{Problems}
\newcommand{\emailhref}[1]{\email{\href{#1}{#1}}}
\newcommand{\dfn}[1]{\textcolor{blue}{\emph{#1}}}
\DeclareRobustCommand{\qbinom}{\genfrac{\lbrack}{\rbrack}{0pt}{}}
\title[Upho lattices I]{Upho lattices I: \\ examples and non-examples of cores}
\author{Sam Hopkins}\emailhref{samuelfhopkins@gmail.com}
\address{Department of Mathematics, Howard University, Washington, DC 20059}
\begin{document}

\dedicatory{Dedicated to Richard Stanley on the occasion of his 80th birthday}

\begin{abstract}
A poset is called upper homogeneous, or ``upho,'' if every principal order filter of the poset is isomorphic to the whole poset. We study (finite type $\mathbb{N}$-graded) upho lattices, with an eye towards their classification. Any upho lattice has associated to it a finite graded lattice called its core, which determines its rank generating function. We investigate which finite graded lattices arise as cores of upho lattices, providing both positive and negative results. On the one hand, we show that many well-studied finite lattices do arise as cores, and we present combinatorial and algebraic constructions of the upho lattices into which they embed.  On the other hand, we show there are obstructions which prevent many finite lattices from being cores.
\end{abstract}

\maketitle

\section{Introduction} \label{sec:intro}

Symmetry is a fundamental theme in mathematics. A close cousin of symmetry is self-similarity, where a part resembles the whole. In this paper, we study certain partially ordered sets that are self-similar in a precise sense. Namely, a poset is called \dfn{upper homogeneous}, or ``\dfn{upho},'' if every principal order filter of the poset is isomorphic to the whole poset. In other words, a poset $\mathcal{P}$ is upho if, looking up from each element $p \in \mathcal{P}$, we see another copy of $\mathcal{P}$. Upho posets were introduced recently by Stanley~\cite{stanley2020upho, stanley2024theorems}. We believe they are a natural and rich class of posets which deserve further attention.

Upho posets are infinite. In order to be able to apply the tools of enumerative and algebraic combinatorics, we need to impose some finiteness condition on the posets we consider. Thus, we restrict our attention to finite type $\mathbb{N}$-graded posets. These infinite posets $\mathcal{P}$ possess a rank function $\rho\colon \mathcal{P}\to\mathbb{N}$ for which we can form the rank generating function
\[ F(\mathcal{P}; x) \coloneqq \sum_{p \in \mathcal{P}} x^{\rho(p)}.\] 
Henceforth, upho posets are assumed finite type $\mathbb{N}$-graded unless otherwise specified.

The big problem concerning upho posets that we are interested in is the following.

\begin{problem} \label{prob:classify}
Classify upho lattices.
\end{problem}

\Cref{prob:classify} is likely a hard problem, perhaps even impossible. But let us explain why there is some hope of making progress on this problem. It was shown by Gao, Guo, Seetharaman, and Seidel~\cite{gao2020upho} that there are uncountably many different rank generating functions of (finite type $\mathbb{N}$-graded) upho posets. This prevents us from being able to say much about upho posets in general. However, the situation is different for \emph{lattices}: in~\cite{hopkins2022note} we showed that the rank generating function of an upho lattice is the multiplicative inverse of a polynomial with integer coefficients.

More precisely, we made the following observation about the rank generating function of an upho lattice. Let $\mathcal{L}$ be an upho lattice, and let $L \coloneqq [\hat{0},s_1 \vee \cdots \vee s_r] \subseteq \mathcal{L}$ denote the interval in $\mathcal{L}$ from its minimum $\hat{0}$ to the join of its atoms $s_1, \ldots, s_r$. We refer to the finite graded lattice $L$ as the \dfn{core} of the upho lattice $\mathcal{L}$. We showed  in~\cite{hopkins2022note} that 
\begin{equation} \label{eqn:rank_char}
F(\mathcal{L}; x) = \chi^*(L; x)^{-1}, 
\end{equation}
where $\chi^*(L; x) = \sum_{p \in L} \mu(\hat{0},p) x^{\rho(p)}$ is the (reciprocal) characteristic polynomial of~$L$. In this way, the core of an upho lattice determines its rank generating function.\footnote{In fact, since the flag $f$-vector of any upho poset is determined by its rank generating function (see~\cite[\S3]{stanley2024theorems}),  the core of an upho lattice determines its entire flag $f$-vector.}

The core does not determine the upho lattice completely. That is, there are different upho lattices with the same core. Nevertheless, to resolve \cref{prob:classify} we would certainly need to answer the following question.

\begin{question} \label{question:main}
Which finite graded lattices are cores of upho lattices?
\end{question}

\Cref{question:main} can be thought of as a kind of tiling problem: our goal is to tile an infinite, fractal lattice $\mathcal{L}$ using copies of some fixed finite lattice $L$, or show that no such tiling is possible. \Cref{question:main} is the main problem we pursue in this paper. In addressing \cref{question:main} here, we provide both positive and negative results. 

On the positive side, we show that many well-studied families of finite graded lattices are cores of upho lattices. Our first major result is the following.

\begin{thm} \label{thm:super_intro}
Any member of a uniform sequence of supersolvable geometric lattices is the core of some upho lattice.
\end{thm}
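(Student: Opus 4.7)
The plan is to construct, for each $L = L_n$ in a uniform sequence $(L_m)_{m \ge 0}$ of supersolvable geometric lattices, an explicit upho lattice $\mathcal{L}$ with core $L$. The guiding template is the Boolean case: for $L = B_n$, one takes $\mathcal{L} = \mathbb{N}^n$, whose $n$ atoms $e_1, \ldots, e_n$ join to $(1,\ldots,1)$ with $[\vec{0},\vec{1}] \cong B_n$, and whose principal filter above any point is isomorphic to $\mathbb{N}^n$ by translation. The uniformity hypothesis---that every rank-$k$ upper interval in $L_m$ is canonically isomorphic to $L_{m-k}$---together with the modular coordinate framing provided by supersolvability is precisely what should allow us to extend the $\mathbb{N}^n$ construction to general uniform sequences.

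I would build $\mathcal{L}$ as an infinite iteration of $L$ along its modular flag, parametrizing its elements by finite sequences of ``local'' data that reproduce $L$ near rank $0$ and look the same after translating up by any fixed element. For the classical uniform sequences, this should specialize to natural combinatorial objects: for the partition lattice $\Pi_{n+1}$, a lattice built from partitions of $\mathbb{N} \times [n+1]$ (or from parking-function-like data) whose $\binom{n+1}{2}$ atoms recover the basic transpositions; for the subspace lattice $\mathrm{L}_n(\mathbb{F}_q)$, the lattice of full-rank $\mathbb{F}_q[[t]]$-lattices inside $\mathbb{F}_q((t))^n$; and analogously for Dowling lattices. In each case the rank generating function of the constructed $\mathcal{L}$ should match $\chi^*(L;x)^{-1}$, consistent with~\eqref{eqn:rank_char}, which provides a sanity check on the construction.

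Verification then reduces to: (i) checking that the proposed object is a finite-type $\mathbb{N}$-graded lattice, (ii) identifying its atoms and showing that the interval below their join is $L$, and (iii) exhibiting an isomorphism between every principal filter and the whole lattice. The main obstacle is producing a single \emph{uniform} construction that handles every uniform sequence at once, rather than settling for a case-by-case treatment of the classical families; even case-by-case, the partition and Dowling cases---where atoms do not commute ``freely'' as they do in the Boolean case---require more care in identifying the right combinatorial model and checking that the generic principal filter really is isomorphic to the whole lattice, not merely to some upper piece. The key leverage throughout is that supersolvability supplies the distinguished directions along which to extend $L$, while uniformity supplies the self-similarity of cross-sections needed to make the extension upho.
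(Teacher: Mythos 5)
Your proposal correctly identifies the two ingredients (uniformity for self-similarity of upper intervals, supersolvability for a distinguished direction in which to grow), and your sanity checks are sound, but it stops at the point where the actual work begins: you name ``producing a single uniform construction'' as the main obstacle and then do not produce it. The paper's construction, which your sketch is missing, has three essential steps. First, one must \emph{fix} auxiliary data rather than merely invoke the existence of isomorphisms: an isomorphism $\theta_s\colon [s,\hat{1}_{L_n}]\to L_{n-1}$ for each atom $s$, and a rank-preserving embedding $\iota_n\colon L_n\to L_{n+1}$ onto $[\hat{0},t_n]$ for a modular coatom $t_n$, subject to the compatibility $\iota_{n-1}\circ\theta_s=\theta_{\iota_n(s)}\circ\iota_n$. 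This compatibility is what lets the local isomorphisms $\eta_n\colon[[s],[t_n]]\to[\hat{0},[t_{n-1}]]$ glue into a single isomorphism $V_{[s]}\to\mathcal{L}_\infty$; without fixing it, step (iii) of your verification plan cannot be carried out. Second, the object is not ``an iteration of $L$ along its modular flag'' built from $L$ alone, but the direct limit $\mathcal{L}_\infty=\varinjlim L_m$ of the \emph{entire tail} of the sequence; $L_n$ is realized as the core of a lattice that uses all the $L_m$ for $m\geq n$.

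Third, and most seriously, the naive limit $\mathcal{L}_\infty$ is \emph{not} finite type $\mathbb{N}$-graded: it has infinitely many atoms (e.g.\ for the partition sequence it is the lattice of almost-everywhere-singleton partitions of $\{1,2,\ldots\}$), so it is not an upho lattice in the sense required. The paper's fix is a trimming procedure: one sets $\nu(x)=\min\{i: x\leq x_i\}$ relative to the modular flag and keeps only $\{x:\nu(x)-\rho(x)<k\}$, proving (Lemma~\ref{lem:super_trim}, via Stanley's result that the sublattice generated by a modular flag and a chain is distributive) that each trimmed $L_m^{(k)}$ is again a supersolvable semimodular lattice, and that the trimmed limit $\mathcal{L}^{(k)}_\infty$ is a finite type upho lattice with core $L_k$. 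Your proposal never confronts the finite-type issue, and your proposed concrete models do not resolve it either: the lattice of full-rank $\mathbb{F}_q[[t]]$-lattices in $\mathbb{F}_q((t))^n$ is a genuinely different upho lattice (the modular one from the introduction), not the one this construction yields, and it has no analogue for the partition or Dowling cases. So the gap is not a missing routine verification but the central construction itself.
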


Supersolvable lattices were introduced by Stanley in~\cite{stanley1972supersolvable}, and uniform sequences of lattices were introduced by Dowling in~\cite{dowling1973class}. These two notions represent two different ways that a finite lattice can have a recursive structure. Examples of uniform sequences of supersolvable geometric lattices include:
\begin{itemize}
\item the finite Boolean lattices $B_n$, i.e., the lattices of subsets of $\{1,2,\ldots,n\}$;
\item the $q$-analogues $B_n(q)$ of $B_n$, i.e., the lattices of $\mathbb{F}_q$-subspaces of $\mathbb{F}_q^n$;
\item the partition lattices $\Pi_{n}$, i.e., the lattices of set partitions of $\{1,2,\ldots,n\}$;
\item the Type B partition lattices $\Pi^B_{n}$, i.e., the intersection lattices of the Type~$B_n$ Coxeter hyperplane arrangements;
\item (generalizing the previous two items) the Dowling lattices~\cite{dowling1973qanalog, dowling1973class} $Q_n(G)$ associated to any finite group $G$.
\end{itemize}
Hence, these are all cores of upho lattices. We discuss these examples in detail, providing explicit descriptions of the upho lattices for which they are cores. \Cref{fig:partitions} depicts one upho lattice produced via our construction.

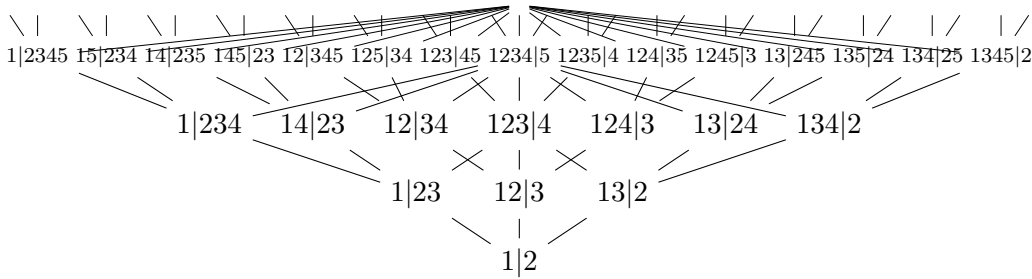
\begin{figure}

\begin{tikzpicture}[scale=0.915]
\node (A) at (0,0) {$1|2$};

\node (B) at (-1.5,1){$1|23$};
\node (C) at (0,1){$12|3$};
\node (D) at (1.5,1){$13|2$};

\node (E) at (-4.5,2){$1|234$};
\node (F) at (-3,2){$14|23$};
\node (G) at (-1.5,2){$12|34$};
\node (H) at (0,2){$123|4$};
\node (I) at (1.5,2){$124|3$};
\node (J) at (3,2){$13|24$};
\node (K) at (4.5,2){$134|2$};

\node (L) at (-7,3){\scriptsize $1|2345$};
\node (M) at (-6,3){\scriptsize $15|234$};
\node (N) at (-5,3){\scriptsize $14|235$};
\node (O) at (-4,3){\scriptsize $145|23$};
\node (P) at (-3,3){\scriptsize$12|345$};
\node (Q) at (-2,3){\scriptsize $125|34$};
\node (R) at (-1,3){\scriptsize $123|45$};
\node (S) at (0,3){\scriptsize $1234|5$};
\node (T) at (1,3){\scriptsize $1235|4$};
\node (U) at (2,3){\scriptsize $124|35$};
\node (V) at (3,3){\scriptsize $1245|3$};
\node (W) at (4,3){\scriptsize $13|245$};
\node (X) at (5,3){\scriptsize $135|24$};
\node (Y) at (6,3){\scriptsize $134|25$};
\node (Z) at (7,3){\scriptsize $1345|2$};

\node (1) at (-7.5,3.75){};
\node (2) at (-7,3.75){};
\node (3) at (-6.5,3.75){};
\node (4) at (-6,3.75){};
\node (5) at (-5.5,3.75){};
\node (6) at (-5,3.75){};
\node (7) at (-4.5,3.75){};
\node (8) at (-4,3.75){};
\node (9) at (-3.5,3.75){};
\node (10) at (-3,3.75){};
\node (11) at (-2.5,3.75){};
\node (12) at (-2,3.75){};
\node (13) at (-1.5,3.75){};
\node (14) at (-1,3.75){};
\node (15) at (-0.5,3.75){};
\node (16) at (0,3.75){};
\node (17) at (0.5,3.75){};
\node (18) at (1,3.75){};
\node (19) at (1.5,3.75){};
\node (20) at (2,3.75){};
\node (21) at (2.5,3.75){};
\node (22) at (3,3.75){};
\node (23) at (3.5,3.75){};
\node (24) at (4,3.75){};
\node (25) at (4.5,3.75){};
\node (26) at (5,3.75){};
\node (27) at (5.5,3.75){};
\node (28) at (6,3.75){};
\node (29) at (6.5,3.75){};
\node (30) at (7,3.75){};
\node (31) at (7.5,3.75){};

\draw (A) -- (B);
\draw (A) -- (C);
\draw (A) -- (D);

\draw (B) -- (E);
\draw (B) -- (F);
\draw (B) -- (H);
\draw (C) -- (G);
\draw (C) -- (H);
\draw (C) -- (I);
\draw (D) -- (H);
\draw (D) -- (J);
\draw (D) -- (K);

\draw (E) -- (L);
\draw (E) -- (M);
\draw (E) -- (S);
\draw (F) -- (N);
\draw (F) -- (O);
\draw (F) -- (S);
\draw (G) -- (P);
\draw (G) -- (Q);
\draw (G) -- (S);
\draw (H) -- (R);
\draw (H) -- (S);
\draw (H) -- (T);
\draw (I) -- (S);
\draw (I) -- (U);
\draw (I) -- (V);
\draw (J) -- (S);
\draw (J) -- (W);
\draw (J) -- (X);
\draw (K) -- (S);
\draw (K) -- (Y);
\draw (K) -- (Z);

\draw[ultra thin] (L) -- (1);
\draw[ultra thin] (L) -- (2);
\draw[ultra thin] (L) -- (16);
\draw[ultra thin] (M) -- (3);
\draw[ultra thin] (M) -- (4);
\draw[ultra thin] (M) -- (16);
\draw[ultra thin] (N) -- (5);
\draw[ultra thin] (N) -- (6);
\draw[ultra thin] (N) -- (16);
\draw[ultra thin] (O) -- (7);
\draw[ultra thin] (O) -- (8);
\draw[ultra thin] (O) -- (16);
\draw[ultra thin] (P) -- (9);
\draw[ultra thin] (P) -- (10);
\draw[ultra thin] (P) -- (16);
\draw[ultra thin] (Q) -- (11);
\draw[ultra thin] (Q) -- (12);
\draw[ultra thin] (Q) -- (16);
\draw[ultra thin] (R) -- (13);
\draw[ultra thin] (R) -- (14);
\draw[ultra thin] (R) -- (16);
\draw[ultra thin] (S) -- (15);
\draw[ultra thin] (S) -- (16);
\draw[ultra thin] (S) -- (17);
\draw[ultra thin] (T) -- (16);
\draw[ultra thin] (T) -- (18);
\draw[ultra thin] (T) -- (19);
\draw[ultra thin] (U) -- (16);
\draw[ultra thin] (U) -- (20);
\draw[ultra thin] (U) -- (21);
\draw[ultra thin] (V) -- (16);
\draw[ultra thin] (V) -- (22);
\draw[ultra thin] (V) -- (23);
\draw[ultra thin] (W) -- (16);
\draw[ultra thin] (W) -- (24);
\draw[ultra thin] (W) -- (25);
\draw[ultra thin] (X) -- (16);
\draw[ultra thin] (X) -- (26);
\draw[ultra thin] (X) -- (27);
\draw[ultra thin] (Y) -- (16);
\draw[ultra thin] (Y) -- (28);
\draw[ultra thin] (Y) -- (29);
\draw[ultra thin] (Z) -- (16);
\draw[ultra thin] (Z) -- (30);
\draw[ultra thin] (Z) -- (31);
\end{tikzpicture}
\caption{Partitions of sets of the form $\{1,2,\ldots,n\}$ into $2$ blocks, ordered by refinement. This is an upho lattice with core $\Pi_3$.} \label{fig:partitions}
\end{figure}

In addition to combinatorial constructions, we also explore algebraic constructions of upho lattices. Monoids provide one algebraic source of upho lattices, as the following lemma explains.

\begin{lemma}{(c.f.~\cite[Lemma~5.1]{gao2020upho})} \label{lem:monoid_intro}
Let $M$ be a finitely generated monoid whose defining relations are homogeneous. If $M$ is left cancellative and every pair of elements in $M$ have a least common right multiple, then $(M,\leq_L)$ is an upho lattice, where $\leq_L$ denotes the partial order of left divisibility.
\end{lemma}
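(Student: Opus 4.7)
The plan is to verify, in order: (i)~$M$ carries a natural $\mathbb{N}$-grading coming from the homogeneity of the defining relations; (ii)~$(M,\leq_L)$ is a finite type $\mathbb{N}$-graded poset with minimum $\hat{0}=1$; (iii)~it is a lattice; and (iv)~it is upho. Homogeneity means every relation equates two words of equal length in the generators, so any two words representing a common element of $M$ have the same length, giving a rank function $\rho\colon M \to \mathbb{N}$ with $\rho(xy)=\rho(x)+\rho(y)$ and $1$ as the unique element of rank $0$. Reflexivity and transitivity of $\leq_L$ are immediate. For antisymmetry, if $b=ax$ and $a=by$, then $a\cdot 1 = a\cdot xy$, so left cancellativity gives $xy=1$, and the grading forces $x=y=1$. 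Finite type holds because $M$ is finitely generated and rank equals word length, so each rank level is finite.

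The join $x \vee_L y$ exists by the LCRM hypothesis. To get meets, I would observe that every principal order ideal $[1,a]$ is finite (its elements have rank at most $\rho(a)$) and closed under joins computed in $M$, since $x,y \leq_L a$ forces $x \vee_L y \leq_L a$ by the very definition of a least common right multiple. Hence $[1,a]$ is itself a finite lattice. For arbitrary $a,b \in M$, the set $S = \{x \in M : x \leq_L a,\; x \leq_L b\}$ is a nonempty finite subset of $[1,a]$ closed under pairwise joins, so it has a maximum, which serves as the greatest common left divisor $a \wedge_L b$.

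For the upho property, I would associate to each $m\in M$ the left-multiplication map $\phi_m\colon M \to \{n \in M : m \leq_L n\}$ defined by $\phi_m(x) = mx$. It is surjective by the definition of $\leq_L$ and injective by left cancellativity. If $x \leq_L y$, say $y = xz$, then $\phi_m(y) = mxz = \phi_m(x)\cdot z$, so $\phi_m(x) \leq_L \phi_m(y)$; conversely, if $mx \leq_L my$, then $my = mxz$ for some $z$, whence $y = xz$ by left cancellativity, i.e., $x \leq_L y$. Thus $\phi_m$ is an order isomorphism from $M$ onto the principal filter above $m$, which is exactly the upho condition.

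I expect the main obstacle to be the existence of meets: the hypothesis supplies only joins, so one must first exploit the $\mathbb{N}$-grading to show that each interval $[1,a]$ is finite, and then invoke the standard fact that a finite join-semilattice with $\hat{0}$ is automatically a lattice. Everything else is a direct unwinding of the cancellativity and divisibility axioms.
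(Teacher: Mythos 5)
Your proposal is correct and follows essentially the same route as the paper: joins come from the least-common-right-multiple hypothesis, meets are recovered via the standard fact that a finite join-semilattice with $\hat{0}$ is a lattice (you apply it to the set of common left divisors, the paper to the interval $[\hat{0}, x\vee y]$ — the same argument), and the upho property comes from the left-multiplication bijection onto each principal filter, whose well-definedness/injectivity is exactly left cancellativity. The only difference is that you spell out the grading, antisymmetry, and finite-type verifications that the paper delegates to its preliminaries section.
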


A class of monoids satisfying the conditions of \cref{lem:monoid_intro} are the (homogeneous) Garside monoids~\cite{dehornoy1999gaussian, dehornoy2015foundations}. The core of a Garside monoid consists of its simple elements. Examples of lattices of simple elements in Garside monoids include:
\begin{itemize}
\item the weak order of a finite Coxeter group $W$;
\item the noncrossing partition lattice of a finite Coxeter group $W$.
\end{itemize}
Hence, these are also cores of upho lattices. \Cref{fig:monoid} depicts an upho lattice of this form. We review these examples coming from Garside and Coxeter theory in detail.

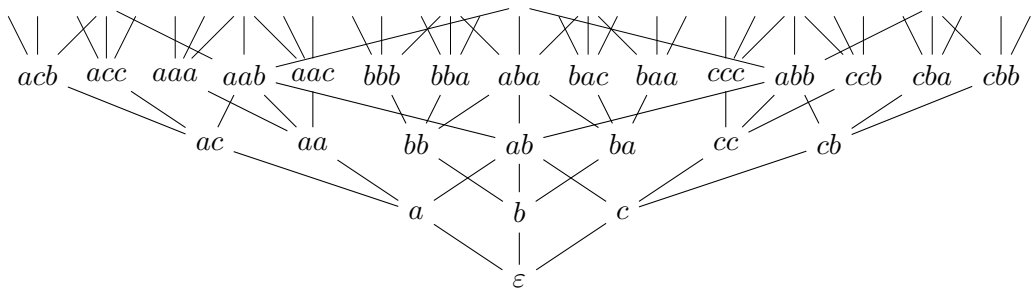
\begin{figure}
\begin{center}
\begin{tikzpicture}[scale=0.915]

\node (A) at (0,0) {$\varepsilon$};

\node (B) at (-1.5,1){$a$};
\node (C) at (0,1){$b$};
\node (D) at (1.5,1){$c$};

\node (E) at (-4.5,2){$ac$};
\node (F) at (-3,2){$aa$};
\node (G) at (-1.5,2){$bb$};
\node (H) at (0,2){$ab$};
\node (I) at (1.5,2){$ba$};
\node (J) at (3,2){$cc$};
\node (K) at (4.5,2){$cb$};

\node (L) at (-7,3){$acb$};
\node (M) at (-6,3){$acc$};
\node (N) at (-5,3){$aaa$};
\node (O) at (-4,3){$aab$};
\node (P) at (-3,3){$aac$};
\node (Q) at (-2,3){$bbb$};
\node (R) at (-1,3){$bba$};
\node (S) at (0,3){$aba$};
\node (T) at (1,3){$bac$};
\node (U) at (2,3){$baa$};
\node (V) at (3,3){$ccc$};
\node (W) at (4,3){$abb$};
\node (X) at (5,3){$ccb$};
\node (Y) at (6,3){$cba$};
\node (Z) at (7,3){$cbb$};

\node (1) at (-7.5,4){};
\node (2) at (-7,4){};
\node (3) at (-6.5,4){};
\node (4) at (-6,4){};
\node (5) at (-5.5,4){};
\node (6) at (-5,4){};
\node (7) at (-4.5,4){};
\node (8) at (-4,4){};
\node (9) at (-3.5,4){};
\node (10) at (-3,4){};
\node (11) at (-2.5,4){};
\node (12) at (-2,4){};
\node (13) at (-1.5,4){};
\node (14) at (-1,4){};
\node (15) at (-0.5,4){};
\node (16) at (0,4){};
\node (17) at (0.5,4){};
\node (18) at (1,4){};
\node (19) at (1.5,4){};
\node (20) at (2,4){};
\node (21) at (2.5,4){};
\node (22) at (3,4){};
\node (23) at (3.5,4){};
\node (24) at (4,4){};
\node (25) at (4.5,4){};
\node (26) at (5,4){};
\node (27) at (5.5,4){};
\node (28) at (6,4){};
\node (29) at (6.5,4){};
\node (30) at (7,4){};
\node (31) at (7.5,4){};

\draw (A) -- (B);
\draw (A) -- (C);
\draw (A) -- (D);

\draw (B) -- (E);
\draw (B) -- (F);
\draw (B) -- (H);
\draw (C) -- (G);
\draw (C) -- (H);
\draw (C) -- (I);
\draw (D) -- (H);
\draw (D) -- (J);
\draw (D) -- (K);

\draw (E) -- (L);
\draw (E) -- (M);
\draw (E) -- (O);
\draw (F) -- (N);
\draw (F) -- (O);
\draw (F) -- (P);
\draw (G) -- (Q);
\draw (G) -- (R);
\draw (G) -- (S);
\draw (H) -- (O);
\draw (H) -- (S);
\draw (H) -- (W);
\draw (I) -- (S);
\draw (I) -- (T);
\draw (I) -- (U);
\draw (J) -- (V);
\draw (J) -- (W);
\draw (J) -- (X);
\draw (K) -- (W);
\draw (K) -- (Y);
\draw (K) -- (Z);

\draw[ultra thin] (L) -- (1);
\draw[ultra thin] (L) -- (2);
\draw[ultra thin] (M) -- (3);
\draw[ultra thin] (M) -- (5);
\draw[ultra thin] (N) -- (6);
\draw[ultra thin] (N) -- (7);
\draw[ultra thin] (P) -- (9);
\draw[ultra thin] (P) -- (10);
\draw[ultra thin] (Q) -- (11);
\draw[ultra thin] (Q) -- (12);
\draw[ultra thin] (R) -- (13);
\draw[ultra thin] (R) -- (15);
\draw[ultra thin] (T) -- (17);
\draw[ultra thin] (T) -- (19);
\draw[ultra thin] (U) -- (20);
\draw[ultra thin] (U) -- (21);
\draw[ultra thin] (V) -- (22);
\draw[ultra thin] (V) -- (23);
\draw[ultra thin] (X) -- (25);
\draw[ultra thin] (X) -- (26);
\draw[ultra thin] (Y) -- (27);
\draw[ultra thin] (Y) -- (29);
\draw[ultra thin] (Z) -- (30);
\draw[ultra thin] (Z) -- (31);

\draw[ultra thin] (L) -- (4);
\draw[ultra thin] (M) -- (4);
\draw[ultra thin] (O) -- (4);
\draw[ultra thin] (N) -- (8);
\draw[ultra thin] (O) -- (8);
\draw[ultra thin] (P) -- (8);
\draw[ultra thin] (Q) -- (14);
\draw[ultra thin] (R) -- (14);
\draw[ultra thin] (S) -- (14);
\draw[ultra thin] (O) -- (16);
\draw[ultra thin] (S) -- (16);
\draw[ultra thin] (W) -- (16);
\draw[ultra thin] (S) -- (18);
\draw[ultra thin] (T) -- (18);
\draw[ultra thin] (U) -- (18);
\draw[ultra thin] (V) -- (24);
\draw[ultra thin] (W) -- (24);
\draw[ultra thin] (X) -- (24);
\draw[ultra thin] (W) -- (28);
\draw[ultra thin] (Y) -- (28);
\draw[ultra thin] (Z) -- (28);
\end{tikzpicture}
\end{center}
\caption{The dual braid monoid $\langle a,b,c\mid ab=bc=ca\rangle$ associated to the symmetric group $S_3$. This is an upho lattice with core the noncrossing partition lattice of $S_3$.} \label{fig:monoid}
\end{figure}

On the negative side, we show that there are various obstructions which prevent arbitrary finite graded lattices from being realized as cores of upho lattices. There are restrictions on the characteristic polynomial of the lattice coming from the equation~\eqref{eqn:rank_char}. There are also some structural obstructions, requiring the lattice to be partly self-similar. These obstructions allow us to show, for instance, that the following plausible candidates cannot in fact be realized as cores:
\begin{itemize}
\item the face lattice of the $n$-dimensional cross polytope, and the face lattice of the $n$-dimensional hypercube, for $n \geq 3$;
\item the bond lattice of the cycle graph $C_n$, for $n \geq 4$;
\item (generalizing the previous item) the lattice of flats of the uniform matroid $U(k,n)$, for $2 < k < n$.
\end{itemize}

The upshot is that \cref{question:main} is quite subtle: it can be difficult to recognize when a given finite graded lattice is the core of an upho lattice. Many well-behaved finite lattices are cores of upho lattices, but many too are not.

To conclude this introduction, let us also discuss future directions we are pursuing. A question naturally suggested by our work here is the following.

\begin{question} \label{question:counting}
For a finite graded lattice $L$, let $\kappa(L)$ denote the cardinality of the collection of upho lattices $\mathcal{L}$ with core $L$. How does this function $\kappa(L)$ behave?
\end{question}

Notice that \cref{question:counting} essentially asks how big the difference between answering \cref{question:main} and resolving \cref{prob:classify} is. In work in progress joint with Joel Lewis~\cite{hopkins2024upho2}, we address \cref{question:counting}. On the one hand, we will show that $\kappa(L)$ is finite if $L$ has no automorphisms, suggesting that possibly $\kappa(L)$ is finite for all finite lattices $L$. On the other hand, we will show that $\kappa(L)$ is unbounded even when we restrict to lattices $L$ of rank two.

Finally, if completely resolving \cref{prob:classify} is too difficult, we might instead hope to classify some subvarieties of upho lattices. Two of the most important subvarieties of lattices are the distributive lattices and the modular lattices. In planned future work, we will explore distributive and modular upho lattices. 

It is easy to see from the representation theorem for locally finite distributive lattices that the only upho distributive lattices are $\mathbb{N}^d$. Upho modular lattices are more interesting. Stanley observed that if $R$ is a discrete valuation ring with finite residue field, then the lattice of full rank submodules of the free module $R^d$ gives an upho modular lattice (see~\cite{stanley2019MO} and~\cite[Conjecture~1.1]{gao2020upho}). Slightly extending this observation, we can show any (sufficiently symmetric) affine building gives rise to an upho modular lattice. Conjecturally, all upho modular lattices are of this form.

The rest of the paper is structured as follows. In \cref{sec:prelim}, we go over some definitions and preliminary results. In \cref{sec:super}, we construct upho lattices from uniform sequences of supersolvable lattices. In \cref{sec:monoid}, we explain how monoids give rise to upho lattices. Finally, in \cref{sec:obstruct}, we discuss obstructions to realizing a finite graded lattice as the core of an upho lattice.

\subsection*{Acknowledgments}

I thank the following people for useful comments related to this work: Yibo Gao, Joel Lewis, Vic Reiner, David Speyer, Richard Stanley, Benjamin Steinberg, Nathan Williams, and Gjergji Zaimi. SageMath~\cite{Sage} was an important computational aid for this research. Finally, I thank the anonymous referees for their careful reading of the paper and their helpful comments.

\section{Preliminaries} \label{sec:prelim}

In this section, we review some basics regarding posets and upho posets. We generally stick to standard notation for posets, as laid out of instance in~\cite[\S 3]{stanley2012ec1}. We use $\mathbb{N}\coloneqq \{0,1,\ldots\}$ to denote the natural numbers, $\mathbb{Z}$ to denote the integers, $\mathbb{Q}$ the rationals, and $\mathbb{R}$ the reals.

\subsection{Poset basics}

Let $P=(P,\leq)$ be a poset. We use standard conventions like writing $y \geq x$ to mean $x \leq y$, writing $x < y$ to mean $x \leq y$ and $x \neq y$, and so on. We also routinely identify any subset $S \subseteq P$ with the corresponding induced subposet $S=(S,\leq)$.

\subsubsection{Basic terminology}

An \dfn{interval} of $P$ is a subset $[x,y] \coloneqq \{z \in P\colon x \leq z \leq y\}$ for $x \leq y\in P$. The poset $P$ is \dfn{locally finite} if every interval of $P$ is finite. 

For~$x, y \in P$, we say $x$ is \dfn{covered} by $y$, written $x \lessdot y$, if $x < y$ and there is no~$z \in P$ with $x < z < y$. If $P$ is locally finite, then the partial order $\leq$ is the reflexive, transitive closure of the cover relation $\lessdot$.

The \dfn{Hasse diagram} of~$P$ is the directed graph whose vertices are the elements of~$P$ with an edge from $x$ to $y$ when $x \lessdot y$. We draw the Hasse diagram of~$P$ in the plane, with $x$ below $y$ if there is an edge from~$x$ to $y$ (and therefore we do not draw the arrows on the edges). Since every locally finite poset is determined by its Hasse diagram, and the posets we study will be locally finite, we will represent posets by their Hasse diagrams.

A \dfn{chain} of $P$ is a totally ordered subset, i.e., a subset $C \subseteq P$ for which any two elements in $C$ are comparable. An \dfn{antichain} of $P$ is a subset $A \subseteq P$ for which any two elements in $A$ are incomparable. We say a chain is maximal if it is maximal by inclusion among chains, and similarly for antichains. 

An \dfn{order filter} of $P$ is an upwards-closed subset, i.e., a subset $F\subseteq P$ such that if $x \in F$ and $x \leq y$ then $y \in F$. Dually, an \dfn{order ideal} of $P$ is a downwards-closed subset, i.e., a subset $I\subseteq P$ such that if $y \in I$ and $x \leq y$ then $x\in I$. An order filter (respectively, order ideal) is principal if it is of the form $V_p\coloneqq \{q \in P\colon p \leq q\}$ (resp., of the form $\Lambda_p \coloneqq \{q \in P\colon q\leq p\}$) for some $p\in P$.

A \dfn{minimum} of $P$, which we always denote by $\hat{0}\in P$, is an element with $\hat{0}\leq x$ for all $x \in P$. Dually, a \dfn{maximum}, denoted $\hat{1}\in P$, is an element with $x \leq \hat{1}$ for all $x\in P$. Clearly, minima and maxima are unique if they exist. If $P$ has a minimum $\hat{0}$, then we call $s\in P$ an \dfn{atom} if $\hat{0} \lessdot s$. Dually, if $P$ has a maximum $\hat{1}$, then we call $t\in P$ a \dfn{coatom} if $t \lessdot \hat{1}$. 

\subsubsection{New posets from old} 

The \dfn{dual poset}  $P^{*}$ of $P$ is the poset with the same set of elements but with the opposite order, i.e., $x \leq_{P^*} y$ if and only if $y \leq_{P} x$.

Now let $Q$ be another poset. The \dfn{direct sum} $P+Q$ of $P$ and~$Q$ is the poset whose set of elements is the (disjoint) union $P\sqcup Q$, with $x \leq_{P + Q} y$ if either $x,y \in P$ and $x \leq_{P} y$, or $x,y\in Q$ and $x \leq_{Q} y$. For a positive integer $n\geq 1$, we denote the direct sum of $n$ copies of $P$ by $n\cdot P$. 

Meanwhile, the \dfn{ordinal sum} $P \oplus Q$ of $P$ and $Q$  is the poset whose elements are the (disjoint) union $P\sqcup Q$, with $x \leq_{P \oplus Q} y$ if either $x,y \in P$ and $x \leq_{P} y$, or $x,y\in Q$ and $x \leq_{Q} y$, or $x \in P$ and $y \in Q$. In other words, $P+Q$ is obtained by placing $P$ and $Q$ ``side-by-side,'' while $P\oplus Q$ is obtained by placing $P$ ``below'' $Q$.

The \dfn{direct product} $P\times Q$ of $P$ and $Q$ is the poset whose set of elements is the (Cartesian) product $P\times Q$, with $(p_1,q_1) \leq_{P\times Q} (p_2,q_2)$ if $p_1 \leq_{P} p_2$ and $q_1 \leq_{Q} q_2$. For a positive integer $n\geq 1$, we denote the direct product of $n$ copies of $P$ by $P^n$.

\subsubsection{M\"{o}bius functions}

Suppose for the moment that $P$ is locally finite, and let $\mathrm{Int}(P)$ denote the set of intervals of $P$. The \dfn{M\"{o}bius function} $\mu\colon \mathrm{Int}(P)\to \mathbb{Z}$ of $P$ is defined recursively by
\[ \mu(x,x) = 1 \textrm{ for all $x \in P$}; \qquad \mu(x,y) = -\sum_{x \leq z < y} \mu(x,z) \textrm{ for all $x<y \in P$},\]
where we use the standard notational shorthand $\mu(x,y) = \mu([x,y])$. The most important application of M\"{o}bius functions is the M\"{o}bius inversion formula (see~\cite[\S 3.7]{stanley2012ec1}), a kind of generalization of the principle of inclusion-exclusion to any poset.

The M\"{o}bius function of a product of posets decomposes as a product of M\"{o}bius functions. In other words, we have $\mu_{P\times Q}( (p_1,q_1), (p_2,q_2)) = \mu_P(p_1,p_2) \cdot \mu_Q(q_1,q_2)$ for all $(p_1,q_1) \leq (p_2,q_2) \in P\times Q$ (see \cite[Proposition~3.8.2]{stanley2012ec1}).

\subsubsection{Lattices}

For $x,y \in P$, an \dfn{upper bound} of $x$ and $y$ is a $z\in P$ with $x \leq z$ and $y \leq z$, and the \dfn{join} (or \dfn{least upper bound}) of $x$ and $y$, denoted $x \vee y$, is the minimum among all upper bounds of $x$ and $y$, if such a minimum exists. Dually, a \dfn{lower bound} of $x$ and $y$ is a $z\in P$ with $z \leq x$ and $z \leq y$, and the \dfn{meet} (or \dfn{greatest lower bound}) of $x$ and $y$, denoted $x \wedge y$, is the maximum among all lower bounds of $x$ and $y$, if such a maximum exists. If $x \vee y$ exists for every $x,y \in P$, then $P$ is a \dfn{join semilattice}. Dually, if~$x \wedge y$ exists for every $x,y \in P$, then $P$ is a \dfn{meet semilattice}. The poset $P$ is a \dfn{lattice} if it is both a join and meet semilattice. 

Now let $L$ be a lattice. The operations of $\vee$ and $\wedge$ are associative and commutative, and therefore for any finite, nonempty subset $S = \{x_1,\ldots,x_n\}\subseteq L$ we can set~$\bigvee S \coloneqq x_1 \vee \cdots \vee x_n $ and $\bigwedge S \coloneqq x_1 \wedge \cdots \wedge x_n$. If $L$ has a minimum $\hat{0}$ then by convention we set~$\bigvee \emptyset \coloneqq \hat{0}$, and dually if $L$ has a maximum $\hat{1}$ we set~$\bigwedge \emptyset \coloneqq \hat{1}$. A \emph{finite} lattice $L$ always has a minimum $\hat{0}=\bigwedge L$ and a maximum $\hat{1}=\bigvee L$.

If $L$ has a minimum $\hat{0}$ and a maximum $\hat{1}$, then a \dfn{complement} of an element $x \in L$ is a $y\in L$ with $x \wedge y = \hat{0}$ and $x \vee y = \hat{1}$.

The lattice $L$ is \dfn{distributive} if the operation of meet distributes over that of join, i.e., $x \wedge (y \vee z) = (x \wedge y) \vee (x \wedge z)$ for all $x,y,z \in L$. It is well-known that this is also equivalent to join distributing over meet, i.e., $x \vee (y \wedge z) = (x \vee y) \wedge (x \vee z)$ for all~$x,y,z \in L$. 

\begin{example}
For any poset $P$, we use $J(P)$ to denote the poset of order ideals of~$P$, ordered by inclusion. Then, $J(P)$ is always a distributive lattice, where the operations of join and meet are union and intersection, respectively. As a variant of this construction, we use $J_{\mathrm{fin}}(P)$ to denote the finite order ideals of $P$, which again always gives a distributive lattice. We note that $J(P+Q) = J(P)\times J(Q)$ and similarly $J_{\mathrm{fin}}(P+Q) = J_{\mathrm{fin}}(P)\times J_{\mathrm{fin}}(Q)$.
\end{example}

The lattice $L$ is \dfn{modular} if whenever $x\leq y \in L$, we have $x \vee (z \wedge y) = (x \vee z) \wedge y$ for all $z\in L$. Observe that modularity is a weaker condition than distributivity: all distributive lattices are modular, but most modular lattices are not distributive. 

We note that the product $L_1 \times L_2$ of two lattices $L_1$ and $L_2$ remains a lattice, and similarly if we append the adjectives ``distributive'' or ``modular.'' Also, any interval in a lattice is a lattice, and similarly if we append the adjectives ``distributive'' or ``modular.'' These properties follow from the fact that lattices, distributive lattices, and modular lattices, are varieties in the sense of universal algebra.

\subsubsection{Convention for finite versus infinite posets}

We will routinely work with both finite and infinite posets, although the posets will always be at least locally finite. For clarity, we now use the following convention: normal script letters (like $P$ or $L$) denote finite posets, while caligraphic letters (like $\mathcal{P}$ or~$\mathcal{L}$) denote infinite posets.

\subsection{Finite graded posets}

Let $P$ be a finite poset. For a nonnegative integer $n \geq 0$, we say that $P$ is \dfn{$n$-graded} if $P$ has a minimum $\hat{0}$, a maximum $\hat{1}$, and we can write $P=\bigsqcup_{i=0}^{n} P_i$ as a disjoint union such that every maximal chain of $P$ is of the form $\hat{0} = x_0 \lessdot x_1 \lessdot \cdots \lessdot x_n = \hat{1}$ with $x_i \in P_i$. In this case, the \dfn{rank function}~$\rho \colon P \to \mathbb{N}$ is defined by setting $\rho(x) \coloneqq i$ for $x \in P_i$. Equivalently, $\rho(\hat{0}) = 0$ and $\rho(y) = \rho(x)+1$ whenever $x \lessdot y \in P$.

\begin{example}
For any positive integer $n\geq 1$, we let $[n]\coloneqq \{1,2,\ldots,n\}$. We view~$[n]$ as a poset, with the usual total order. This chain poset~$[n]$ is the most basic example of a finite $(n-1)$-graded poset.
\end{example}

We say that the finite poset $P$ is graded if it is $n$-graded for some $n$. In this case, we say that the rank of $P$ is $n$ and, slightly abusing notation, write $\rho(P) \coloneqq n$. If~$P$ and $Q$ are two finite graded posets, then their product $P\times Q$ is also graded of rank $\rho(P\times Q) = \rho(P) + \rho(Q)$. Also, any interval in a finite graded poset is graded.

\subsubsection{Generating polynomials for finite graded posets}

Now assume that $P$ is graded. The \dfn{rank generating polynomial} of $P$ is 
\[F(P; x) \coloneqq \sum_{p \in P} x^{\rho(p)}.\]
The \dfn{characteristic polynomial} of $P$ is 
\[ \chi(P; x) \coloneqq \sum_{p \in P} \mu(\hat{0},p) \, x^{\rho(P)-\rho(p)}.\]
The exponent of $x$ in each term of the characteristic polynomial $\chi(P; x)$ records the corank $\rho(P)-\rho(p)$ of the element $p \in P$. Using the corank in the characteristic polynomial is very standard, but, for reasons that will become clear soon, we need a version of the characteristic polynomial where the exponent records the usual rank instead. Hence, we define the \dfn{reciprocal characteristic polynomial} of $P$ to be
\[ \chi^*(P; x) \coloneqq \sum_{p \in P} \mu(\hat{0},p) \, x^{\rho(p)}.\]
Observe that $\chi^*(P; x) = x^{\rho(P)} \cdot \chi(P;x^{-1})$.

These invariants of finite graded posets all play nicely with products. Namely, $F(P\times Q; x) = F(P; x) \cdot F(Q; x)$ and $\chi^*(P\times Q; x) = \chi^*(P; x) \cdot \chi^*(Q; x)$.

\subsubsection{Finite graded lattices}

Here we are most interested in finite graded lattices.

\begin{example} \label{ex:boolean}
The rank $n$ \dfn{Boolean lattice} $B_n$ is the poset of subsets of~$[n]$, ordered by inclusion. $B_n$ is a finite graded lattice, with $\rho(S) = \#S$ for all $S\in B_n$. Its M\"{o}bius function is given by $\mu(S,T) = (-1)^{\#T\setminus S}$ for all $S\leq T \in B_n$. Hence, $F(B_n; x) = \sum_{k=0}^{n}\binom{n}{k}x^k= (1+x)^n$ and $\chi^*(B_n; x) = \sum_{k=0}^{n} (-1)^k\binom{n}{k} x^k= (1-x)^n$. These formulas can also be seen from the fact that $B_n=J(n \cdot [1]) = [2]^n$.
\end{example}

\begin{example} \label{ex:distributive}
Birkhoff's representation theorem for finite distributive lattices says that every finite distributive lattice $L$ has the form $L = J(P)$ for a unique finite poset~$P$, the subposet of join-irreducible elements of $L$; see \cite[\S 3.3]{birkhoff1967lattice} or~\cite[\S3.4]{stanley2012ec1}. So let~$L=J(P)$ be a finite distributive lattice. Then $L$ is graded, with $\rho(I) = \#I$ for~$I \in J(P)$. Its M\"{o}bius function is given by 
\[\mu(I,I') = \begin{cases} (-1)^{\#I' \setminus I} &\textrm{if $I'\setminus I$ is an antichain of $P$}; \\
0 &\textrm{otherwise},\end{cases}\]
for $I\leq I' \in J(P)$ (see~\cite[Example 3.9.6]{stanley2012ec1}). Hence,  $F(L; x) = \sum_{I \in J(P)} x^{\#I}$ and $\chi^*(L; x) = \sum_{I \subseteq \min(P)}(-x)^{\#I} = (1-x)^{\#\min(P)}$, where $\min(P)$ is the set of minimal elements of $P$. Observe how this example generalizes \cref{ex:boolean}.
\end{example}

\Cref{ex:distributive} explains that all finite distributive lattices are graded. More generally, all finite modular lattices are graded. In fact, a finite lattice $L$ is modular if and only if $L$ is graded and $\rho(p) + \rho(q) = \rho(p \vee q) + \rho(p \wedge q)$ for all $p, q\in L$ (see, e.g., \cite[\S 2.8]{birkhoff1967lattice} or~\cite[\S3.3]{stanley2012ec1}). 

\begin{example} \label{ex:q-boolean}
Let $q$ be a prime power, and $\mathbb{F}_q$ the finite field with $q$ elements. We denote by $B_n(q)$ the poset of $\mathbb{F}_q$-subspaces of the vector space $\mathbb{F}_q^n$, ordered by inclusion. This \dfn{subspace lattice} $B_n(q)$, also known as the $q$-analogue of~$B_n$, is a finite modular lattice. Its rank function is $\rho(U) = \dim(U)$ for $U \in B_n(q)$, and its M\"{o}bius function is $\mu(U,V) = (-1)^{k}q^{\binom{k}{2}}$, where $k=\dim(V)-\dim(U)$, for $U \leq V \in B_n(q)$ (see \cite[Example~3.10.2]{stanley2012ec1}). Hence, $F(B_n(q); x) = \sum_{k=0}^{n}\qbinom{n}{k}_q \, x^k$ and $\chi^*(B_n(q); x) = \sum_{k=0}^{n}(-1)^k q^{\binom{k}{2}} \qbinom{n}{k}_q \, x^k = (1-x)(1-qx)(1-q^2x)\cdots (1-q^{n-1}x)$. Here we used the standard notation for the $q$-binomial coefficient $\qbinom{n}{k}_q \coloneqq \frac{[n]_q!}{[k]_q![n-k]_q!}$, where $[n]_q \coloneqq \frac{(1-q^n)}{(1-q)} = 1+q +q^2+\cdots+q^{n-1}$ and $[n]_q! \coloneqq [n]_q \cdot [n-1]_q \cdots [1]_q$.
\end{example}

There are variants of the modular property for finite lattices that are very interesting from a combinatorial point of view. A finite lattice $L$ is \dfn{(upper) semimodular} if it is graded and satisfies $\rho(p) + \rho(q) \geq \rho(p \vee q) + \rho(p \wedge q)$ for all~$p, q\in L$. The lattice~$L$ is \dfn{atomic} if every element is a join of atoms. Finally, $L$ is \dfn{geometric} if it is both semimodular and atomic. For example, the modular lattices $B_n$ and $B_n(q)$ are geometric, since they are atomic. However, most geometric lattices are not modular. Geometric lattices are intensely studied, because they are precisely the lattices of flats of matroids (see~\cite[\S3.3]{stanley2012ec1}). Any interval in a semimodular lattice is semimodular, and, less obviously, any interval in a geometric lattice is geometric (see, e.g., \cite[\S8.9, Exercise 7]{birkhoff1967lattice} and \cite[Proposition~3.3.3]{stanley2012ec1}).

\begin{example} \label{ex:partition}
Recall that a partition of a set $X$ is a collection $\pi=\{B_1,\ldots,B_k\}$ of nonempty subsets of $X$ ($\emptyset \subsetneq B_1,\ldots,B_k \subseteq X$) that are pairwise disjoint ($B_i \cap B_j = \emptyset$ for $i \neq j$) and whose union is $X$ ($\cup_{i=1}^{k}B_i = X$). The sets $B_i \in \pi$ are called the blocks of $\pi$. The \dfn{partition lattice} $\Pi_n$ is the poset of partitions of $[n]$, ordered by refinement. In other words, for two partitions~$\pi$ and $\pi'$ of $[n]$, we have $\pi \leq \pi'$ if every block $B \in \pi$ satisfies $B\subseteq B'$ for some block $B'\in \pi'$. The partition lattice is a geometric lattice of rank $\rho(\Pi_n)=n-1$, with $\rho(\pi) = n-\#\pi$ for $\pi \in \Pi_n$. Hence, $F(\Pi_n; x) = \sum_{k=0}^{n}S(n,n-k) x^k$, where $S(n,k)$ are the Stirling number of the second kind. Furthermore, it is well-known that $\chi^*(\Pi_n; x)= \sum_{k=0}^{n}s(n,n-k) x^k = (1-x)(1-2x)\cdots (1-(n-1)x)$, where $s(n,k)$ are the (signed) Stirling number of the first kind (see \cite[Example 3.10.4]{stanley2012ec1}). 
\end{example}

\subsection{Infinite graded posets}

Let $\mathcal{P}$ be an infinite poset. We say $\mathcal{P}$ is \dfn{$\mathbb{N}$-graded} if~$\mathcal{P}$ has a minimum $\hat{0}$ and we can write $\mathcal{P}=\bigsqcup_{i=0}^{\infty}P_i$ as a disjoint union such that every maximal chain of $P$ is of the form $\hat{0} = x_0 \lessdot x_1 \lessdot x_2 \lessdot \cdots$ with $x_i \in P_i$. In this case, the \dfn{rank function} $\rho \colon \mathcal{P} \to \mathbb{N}$ is defined by setting $\rho(x) \coloneqq i$ for $x \in P_i$. Equivalently, $\rho(\hat{0}) = 0$ and~$\rho(y) = \rho(x)+1$ whenever~$x \lessdot y \in \mathcal{P}$.

 If $\mathcal{P}$ and $\mathcal{Q}$ are $\mathbb{N}$-graded, then $\mathcal{P}\times \mathcal{Q}$ is $\mathbb{N}$-graded. Also, any interval in a locally finite $\mathbb{N}$-graded poset is a finite graded poset.

\subsubsection{Generating functions for infinite graded posets}

Let $\mathcal{P}$ be an $\mathbb{N}$-graded poset. In order to define sensible analogues of the rank generating and characteristic polynomials for $\mathcal{P}$, we need to make a further finiteness assumption. So let us say that $\mathcal{P}$ is \dfn{finite type} if $\{p\in \mathcal{P}\colon \rho(p)=i\}$ is finite for each $i \in \mathbb{N}$. Observe that a finite type $\mathbb{N}$-graded poset is locally finite.

So now assume that $\mathcal{P}$ is a finite type $\mathbb{N}$-graded poset. Then we define the \dfn{rank generating function} of $\mathcal{P}$ to be 
\[F(\mathcal{P}; x) = \sum_{p \in \mathcal{P}} x^{\rho(p)}, \]
a formal power series in the variable $x$. And we define the \dfn{characteristic generating function} of $\mathcal{P}$ to be
\[\chi^*(\mathcal{P}; x) =\sum_{p\in \mathcal{P}} \mu(\hat{0},p) \, x^{\rho(p)}, \]
again, a formal power series. We write $\chi^*(\mathcal{P}; x)$ with an asterisk to emphasize that the characteristic generating function of an infinite poset $\mathcal{P}$ uses rank in the exponent, like the \emph{reciprocal} characteristic polynomial $\chi^*(P; x)$ of a finite poset $P$.

Again, these invariants play nicely with products: $F(\mathcal{P}\times\mathcal{Q}; x)=F(\mathcal{P}; x) \cdot F(\mathcal{Q}; x)$ and $\chi^*(\mathcal{P\times Q}; x) = \chi^*(\mathcal{P}; x) \cdot \chi^*(\mathcal{Q}; x)$.

\begin{example}
The set of natural numbers $\mathbb{N} = \{0,1,\ldots\}$, with their usual total order, is the most basic example of an $\mathbb{N}$-graded poset. In fact, $\mathbb{N}$ is a finite type $\mathbb{N}$-graded lattice with $F(\mathbb{N}; x) = \sum_{k=0}^{\infty}x^k = \frac{1}{1-x}$ and $\chi^*(\mathbb{N}; x) = 1-x$. Hence, for any positive integer $n \geq 1$, $\mathbb{N}^n$ is a finite type $\mathbb{N}$-graded lattice with $F(\mathbb{N}^n; x) = \frac{1}{(1-x)^n}$ and $\chi^*(\mathbb{N}^n; x) = (1-x)^n$.
\end{example}

\subsection{Upho posets}

We say that poset $\mathcal{P}$ is \dfn{upper homogeneous}, or ``\dfn{upho},'' if for every $p \in \mathcal{P}$, the principal order filter $V_p=\{q\in\mathcal{P}\colon q\geq p\}$ is isomorphic to~$\mathcal{P}$. To avoid trivialities, let us also require that $\mathcal{P}$ has at least two elements; then, it must be infinite.

\begin{example}
The natural numbers $\mathbb{N}$ form an upho poset. Similarly, the nonnegative rational numbers $\mathbb{Q}_{\geq 0}$, the nonnegative real number $\mathbb{R}_{\geq 0}$, and indeed the nonnegative numbers in any ordered field, form upho posets.
\end{example}

\begin{example}
Let $X$ be any infinite set. Then the poset of finite subsets of $X$, ordered by inclusion, is upho.
\end{example}

In this paper we are primarily concerned with upho posets (in fact, upho lattices). In order to be able to apply the tools of enumerative and algebraic combinatorics to study these posets, we must impose some finiteness conditions on them. Hence, from now on, {\bf all upho posets are assumed finite type $\mathbb{N}$-graded} unless otherwise specified. Of the preceding examples, only $\mathbb{N}$ is finite type $\mathbb{N}$-graded.

The product $\mathcal{P}\times \mathcal{Q}$ of two upho posets $\mathcal{P}$ and $\mathcal{Q}$ remains upho. So, for instance, $\mathbb{N}^n$ is an upho lattice for any $n \geq 1$. We will soon see many more examples of upho lattices, but for now $\mathbb{N}^n$ is a good prototypical example to have in mind.

\begin{remark}
Upho posets were introduced by Stanley~\cite{stanley2020upho, stanley2024theorems}. Stanley was mainly interested in planar upho posets (i.e., those with planar Hasse diagrams). In particular, for various planar upho posets~$\mathcal{P}$, he was interested in counting the maximal chains in~$[\hat{0},p]$ for $p \in \mathcal{P}$. When $\mathcal{P}=\mathbb{N}^2$, these numbers form Pascal's triangle. Thus, Stanley used these chain counts for other planar upho posets to produce analogues of Pascal's triangle~\cite{stanley2020some, stanley2024theorems}. All planar upho posets are meet semilattices, but most are not lattices. In fact, it is not hard to see that $\mathbb{N}$ and $\mathbb{N}^2$ are the only planar upho lattices. Planar upho posets have a rather simple structure, as described in~\cite{gao2020upho}. We will see that upho lattices can have a very intricate structure.
\end{remark}

\subsubsection{Rank and characteristic generating functions of upho posets}

The following important result on rank generating functions of upho posets can be proved by a simple application of M\"{o}bius inversion.

\begin{thm}[{\cite[Theorem 1]{hopkins2022note}}] \label{thm:upho_rgf}
For any upho poset $\mathcal{P}$,  $F(\mathcal{P}; x) = \chi^*(\mathcal{P}; x)^{-1}$.
\end{thm}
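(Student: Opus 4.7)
The plan is to show that $F(\mathcal{P};x) \cdot \chi^*(\mathcal{P};x) = 1$ by comparing coefficients of $x^n$ on both sides, using Möbius inversion in one direction and the upho property in the other.

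First, I would fix $n \geq 0$ and consider the double sum
\[
\sum_{\substack{q \in \mathcal{P} \\ \rho(q) = n}} \; \sum_{p \leq q} \mu(\hat{0}, p).
\]
Since the poset is finite type $\mathbb{N}$-graded, every interval $[\hat{0}, q]$ is finite, so the inner sum is well-defined. By the defining recursion of the Möbius function (equivalently, Möbius inversion applied to the constant function $1$ on $[\hat{0}, q]$), the inner sum equals $\delta_{q, \hat{0}}$. Hence the whole expression equals $1$ if $n=0$ and $0$ otherwise.

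Next I would swap the order of summation to write the same quantity as
\[
\sum_{\substack{p \in \mathcal{P} \\ \rho(p) \leq n}} \mu(\hat{0}, p) \cdot \#\bigl\{q \in \mathcal{P} : q \geq p, \; \rho(q) = n\bigr\}.
\]
This is where the upho hypothesis enters. Because $V_p \cong \mathcal{P}$ as posets and this isomorphism sends $p$ to $\hat{0}$, it is rank-preserving up to a shift: an element $q \in V_p$ has rank $\rho(q) - \rho(p)$ when viewed in $V_p$. Therefore, writing $f_k \coloneqq \#\{r \in \mathcal{P}: \rho(r) = k\}$ (which is finite by the finite-type assumption), we have $\#\{q \geq p : \rho(q) = n\} = f_{n - \rho(p)}$. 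The rearranged sum becomes
\[
\sum_{k=0}^{n} \Bigl(\sum_{\substack{p \in \mathcal{P} \\ \rho(p) = k}} \mu(\hat{0}, p)\Bigr) \cdot f_{n-k},
\]
which is precisely the coefficient of $x^n$ in the product $\chi^*(\mathcal{P};x) \cdot F(\mathcal{P};x)$. Combining the two evaluations yields $\chi^*(\mathcal{P};x) \cdot F(\mathcal{P};x) = 1$ as formal power series, whence $F(\mathcal{P};x) = \chi^*(\mathcal{P};x)^{-1}$.

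The only subtlety, and the step where I would take care, is justifying the swap of summation and ensuring the expressions are well-defined; both are straightforward once we fix $n$ and note that finite-type $\mathbb{N}$-graded means only finitely many $p$ contribute for each $n$, since $\rho(p) \leq n$ forces $p$ to lie in one of finitely many rank sets. No real obstacle here — the proof reduces to the observation that the upho property converts ``counting elements of a given rank in an upper filter'' into ``counting elements of the shifted rank in $\mathcal{P}$,'' and this is exactly the bookkeeping needed to turn the Möbius inversion identity into the reciprocal identity of generating functions.
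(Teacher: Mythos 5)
Your proof is correct and is exactly the ``simple application of M\"{o}bius inversion'' that the paper attributes to \cite[Theorem 1]{hopkins2022note}: the double sum $\sum_{\rho(q)=n}\sum_{p\le q}\mu(\hat{0},p)=\delta_{n,0}$, reorganized via the upho property into the convolution $\sum_k c_k f_{n-k}$, is the standard argument. The one point worth being explicit about --- that the isomorphism $V_p\cong\mathcal{P}$ shifts rank by exactly $\rho(p)$ because order filters inherit cover relations and poset isomorphisms of $\mathbb{N}$-graded posets preserve rank --- you have handled adequately.
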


\subsubsection{Upho lattices and their cores}

Now suppose that $\mathcal{L}$ is an upho lattice. Then we define the \dfn{core} of $\mathcal{L}$ to be $L \coloneqq [\hat{0},s_1\vee s_2 \vee \cdots \vee s_r]\subseteq \mathcal{L}$, where $s_1,\ldots,s_r$ are the atoms of $\mathcal{L}$. Evidently, the core of an upho lattice is a finite graded lattice. The point of the core is the following corollary, which can be proved for instance using Rota's cross-cut theorem (see~\cite[Corollary 3.9.4]{stanley2012ec1}).

\begin{cor}[{\cite[Corollary 6]{hopkins2022note}}] \label{cor:upho_lat_rgf}
Let $\mathcal{L}$ be an upho lattice with core $L$. Then $\chi^*(\mathcal{L}; x) = \chi^*(L; x)$. Hence, from \cref{thm:upho_rgf}, we conclude $F(\mathcal{L}; x) =  \chi^*(L; x)^{-1}$.
\end{cor}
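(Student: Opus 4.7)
The plan is to first establish the identity $\chi^*(\mathcal{L}; x) = \chi^*(L; x)$, after which the stated formula for $F(\mathcal{L}; x)$ follows immediately by substitution into \cref{thm:upho_rgf}. Matching coefficients of $x^k$ on both sides, this identity reduces to two claims about the Möbius function of $\mathcal{L}$ from $\hat{0}$: (a) that $\mu_{\mathcal{L}}(\hat{0}, p) = \mu_L(\hat{0}, p)$ for $p \in L$, and (b) that $\mu_{\mathcal{L}}(\hat{0}, p) = 0$ for $p \in \mathcal{L} \setminus L$. Claim (a) is purely formal: since $L$ is itself the interval $[\hat{0}, s_1 \vee \cdots \vee s_r]$ of $\mathcal{L}$, when $p \in L$ the interval $[\hat{0}, p]$ coincides whether taken inside $\mathcal{L}$ or inside $L$, and the Möbius function is intrinsic to that interval.

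For claim (b), which is the real content, I would invoke Rota's cross-cut theorem, as suggested in the statement. Fix $p \in \mathcal{L} \setminus L$. Because $\mathcal{L}$ is locally finite, $[\hat{0}, p]$ is a finite lattice; its set of atoms is precisely $A_p \coloneqq \{s_i : s_i \leq p\}$, a (possibly proper) subset of the atoms of $\mathcal{L}$, and this antichain forms a cross-cut of $[\hat{0}, p]$ since every maximal chain from $\hat{0}$ to $p$ passes through some atom. Rota's theorem then yields
\[
\mu_{\mathcal{L}}(\hat{0}, p) \;=\; \sum_{\substack{B \subseteq A_p \\ \bigvee B \,=\, p}} (-1)^{|B|}.
\]
Now any subset $B \subseteq A_p$ satisfies $\bigvee B \leq s_1 \vee \cdots \vee s_r$, so $\bigvee B$ lies in $L$. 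Since $p \notin L$, no such $B$ can have $\bigvee B = p$; the sum is empty, and $\mu_{\mathcal{L}}(\hat{0}, p) = 0$.

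Combining (a) and (b) gives $\chi^*(\mathcal{L}; x) = \sum_{p \in L} \mu_L(\hat{0}, p)\, x^{\rho(p)} = \chi^*(L; x)$, and the formula for $F(\mathcal{L}; x)$ follows at once from \cref{thm:upho_rgf}. The main step requiring care is the appeal to Rota's cross-cut theorem—specifically, verifying that joins computed in $[\hat{0}, p]$ agree with those computed in $\mathcal{L}$—but this is routine since $[\hat{0}, p]$ is an interval of a lattice. It is worth noting that the upho hypothesis plays no role in deriving the identity $\chi^*(\mathcal{L}; x) = \chi^*(L; x)$ itself; it enters only through the final invocation of \cref{thm:upho_rgf}, which relied on $\mathcal{L}$ being upho.
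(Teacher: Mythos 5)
Your proof is correct and follows exactly the route the paper indicates: the paper gives no details beyond citing \cite[Corollary 6]{hopkins2022note} and remarking that the result ``can be proved for instance using Rota's cross-cut theorem,'' which is precisely the argument you carry out (applying the cross-cut theorem to the atoms of $[\hat{0},p]$ to kill $\mu(\hat{0},p)$ for $p\notin L$). Your closing observation that only the lattice property, not the upho property, is needed for $\chi^*(\mathcal{L};x)=\chi^*(L;x)$ is also accurate.
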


Note that \cref{cor:upho_lat_rgf} was stated as equation~\eqref{eqn:rank_char} in the introduction.

\begin{example}
For any $n \geq 1$, $\mathbb{N}^n$ is an upho lattice with core $B_n$, and indeed $F(\mathbb{N}^n; x) = \frac{1}{(1-x)^n} = \chi^*(B_n; x)^{-1}$.
\end{example}

With all the terminology and preliminary results fully explained, we now return to \cref{question:main}: which finite graded lattices $L$ arise as cores of upho lattices~$\mathcal{L}$? For example, we just saw that the Boolean lattices $B_n$ do. We will explore this question in the next three sections. 

\section{Upho lattices from sequences of finite lattices} \label{sec:super}

In this section, we will develop a method for producing upho lattices from limits of sequences of finite graded lattices which are appropriately embedded in one another. In order to make ``appropriately embedded in one another'' precise, we will need two notions from the theory of finite lattices: supersolvability, as introduced by Stanley in~\cite{stanley1972supersolvable}, and uniformity, as introduced by Dowling in~\cite{dowling1973class}. 

\subsection{Supersolvable semimodular lattices} \label{subsec:super}

Let $L$ be a (finite) semimodular lattice. An element $p\in L$ is \dfn{modular} if $\rho(p) + \rho(q) = \rho(p \vee q) + \rho(p \wedge q)$ for all~$q \in L$. For example, if $L$ is modular, then every element is modular. An important property of modularity is that it is transitive on ``lower'' intervals:

\begin{prop}[{See~\cite{stanley1971modular} or~\cite[Proposition 4.10(b)]{stanley2007hyperplane}}] \label{prop:mod_trans}
Let $L$ be a semimodular lattice. If $x$ is modular in $L$ and $y$ is modular in $[\hat{0},x]$, then $y$ is modular in~$L$.
\end{prop}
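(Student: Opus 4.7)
The plan is to prove the rank identity $\rho(y) + \rho(z) = \rho(y \vee z) + \rho(y \wedge z)$ for all $z \in L$. Since $L$ is semimodular, the inequality $\rho(y \vee z) + \rho(y \wedge z) \leq \rho(y) + \rho(z)$ is automatic, so only the reverse inequality requires work. The strategy is to use $x \wedge z$ as a ``bridge'' element lying inside $[\hat{0}, x]$, apply modularity of $y$ there, and then transport the resulting identity up to $L$ using modularity of $x$.

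First I would record that $y \leq x$ implies $y \wedge (x \wedge z) = y \wedge z$, and that $x \wedge z \in [\hat{0}, x]$, so the modularity of $y$ in $[\hat{0}, x]$ applied to $x \wedge z$ yields
\[
\rho(y \vee (x \wedge z)) \;=\; \rho(y) + \rho(x \wedge z) - \rho(y \wedge z).
\]

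The key technical step is to show that modularity of $x$ in $L$ forces the map $u \mapsto u \vee z$ to preserve rank differences on the interval $[x \wedge z, x]$: for every $u$ with $x \wedge z \leq u \leq x$,
\[
\rho(u \vee z) \;=\; \rho(u) + \rho(z) - \rho(x \wedge z).
\]
The upper bound $\leq$ follows from semimodularity of $L$ applied to the pair $(u, z)$ together with the equality $u \wedge z = x \wedge z$, which in turn comes from $x \wedge z \leq u \leq x$. For the lower bound I would apply semimodularity of $L$ to the pair $(x, u \vee z)$: this gives
\[
\rho(x) + \rho(u \vee z) \;\geq\; \rho(x \vee z) + \rho(x \wedge (u \vee z)) \;\geq\; \rho(x \vee z) + \rho(u),
\]
since $u \leq x \wedge (u \vee z)$. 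Substituting $\rho(x \vee z) = \rho(x) + \rho(z) - \rho(x \wedge z)$ from the modularity of $x$ in $L$ cancels $\rho(x)$ and produces exactly the required inequality.

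Finally, I would apply this rank-preservation identity to $u = y \vee (x \wedge z) \in [x \wedge z, x]$; its image under join-with-$z$ is $(y \vee (x \wedge z)) \vee z = y \vee z$. Combining with the first displayed equation yields $\rho(y \vee z) = \rho(y) + \rho(z) - \rho(y \wedge z)$, which is the required modularity of $y$ in $L$. The main obstacle I anticipate is isolating the lower bound in the technical step: it is the unique place where the full strength of modularity of $x$ (as opposed to mere semimodularity) is used, and one has to pick the right pair of elements to feed into the semimodular inequality. The remainder is formal bookkeeping.
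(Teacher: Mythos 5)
Your argument is correct, and it is complete: step 1 correctly reduces to the meet $x\wedge z$ inside $[\hat 0,x]$ (using $y\le x$ to get $y\wedge(x\wedge z)=y\wedge z$, and the fact that ranks, joins and meets in the interval $[\hat 0,x]$ agree with those in $L$); the two-sided estimate in your key step is valid (the upper bound uses $u\wedge z=x\wedge z$, which indeed follows from $x\wedge z\le u\le x$, and the lower bound correctly combines semimodularity for the pair $(x,u\vee z)$ with the modularity of $x$ applied to $z$); and the specialization $u=y\vee(x\wedge z)$, with $u\vee z=y\vee z$, assembles the desired equality $\rho(y)+\rho(z)=\rho(y\vee z)+\rho(y\wedge z)$. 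The paper itself gives no proof of this proposition (it only cites Stanley), so there is nothing internal to compare against; your proof is essentially the standard rank-function argument found in the cited references, and it stands on its own.
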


The semimodular lattice $L$ is \dfn{supersolvable} if it possesses a maximal chain $\hat{0} = x_0 \lessdot x_1 \lessdot \cdots \lessdot x_n = \hat{1}$ of modular elements. As suggested by \cref{prop:mod_trans}, we can build up a maximal chain of modular elements from the top down, coatom-by-coatom. For this reason, supersolvable lattices have a recursive structure. Moreover, as shown by Stanley~\cite{stanley1972supersolvable}, they enjoy many remarkable enumerative properties, the most prominent being that their characteristic polynomials factor.

\begin{thm}[{Stanley~\cite{stanley1971modular}~\cite[Theorem 4.1]{stanley1972supersolvable}}] \label{thm:super_stanley}
Let $L$ be a supersolvable semimodular lattice with  maximal chain of modular elements $x_0 \lessdot x_1 \lessdot \cdots \lessdot x_n$. For $i=1,\ldots,n$, set 
\[a_i \coloneqq \#\{\textrm{atoms } s\in L\colon s \leq x_i, s \not \leq x_{i-1}\}.\]
Then $\chi^*(L;x) = (1-a_1x) (1-a_2x) \cdots (1-a_n x)$.
\end{thm}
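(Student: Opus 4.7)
The plan is induction on $n = \rho(L)$, peeling off the modular coatom $m \coloneqq x_{n-1}$. The base case $n = 0$ is immediate, since both sides equal $1$.

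For the inductive step, first I would verify that $[\hat{0}, m]$ again satisfies the hypotheses of the theorem, with modular chain $x_0 \lessdot x_1 \lessdot \cdots \lessdot x_{n-1}$. Semimodularity is inherited by any interval. Each $x_i$ with $i < n$ remains modular in $[\hat{0}, m]$: for any $q \leq m$, both $x_i \vee q$ and $x_i \wedge q$ lie in $[\hat{0}, m]$ and their ranks there agree with their ranks in $L$, so the modular identity transfers verbatim. Moreover, the atoms of $[\hat{0}, m]$ are exactly the atoms of $L$ lying weakly below $m$, so the constants $a_1, \ldots, a_{n-1}$ are unchanged when computed inside $[\hat{0}, m]$. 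By the inductive hypothesis,
\[ \chi^*([\hat{0}, m]; x) = (1 - a_1 x)(1 - a_2 x) \cdots (1 - a_{n-1} x). \]

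The crux of the proof, and in my view the main obstacle, is the general \emph{modular-element factorization}
\[ \chi^*(L; x) \;=\; \chi^*([\hat{0}, m]; x) \cdot \sum_{\substack{y \in L \\ y \wedge m = \hat{0}}} \mu(\hat{0}, y) \, x^{\rho(y)}. \]
I would prove this by stratifying the defining sum for $\chi^*(L; x)$ according to the value of $p \wedge m$: because $m$ is modular, for each fixed $z \in [\hat{0}, m]$ the assignment $y \mapsto y \vee z$ sets up a rank-preserving bijection from $\{y \in L : y \wedge m = \hat{0}\}$ onto $\{p \in L : p \wedge m = z\}$, and a short M\"obius computation (conveniently phrased via Weisner's theorem applied at an atom $s_0 \not\leq m$) yields $\mu(\hat{0}, p) = \mu(\hat{0}, z) \, \mu(\hat{0}, y)$ under this correspondence. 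Splitting the sum according to $z$ and $y$ and recognizing the two factors then produces the claimed identity.

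Finally, I would specialize to the coatom case $m = x_{n-1}$. If $y \wedge m = \hat{0}$, then modularity of $m$ gives $\rho(y \vee m) = \rho(y) + (n-1)$, and since $y \vee m \leq \hat{1}$ this forces $\rho(y) \leq 1$. Hence $y$ is either $\hat{0}$ (contributing $1$) or one of the $a_n$ atoms of $L$ not below $m$ (each contributing $-x$), so the inner sum collapses to $1 - a_n x$. Multiplying with the inductive hypothesis yields $\chi^*(L; x) = (1 - a_1 x) \cdots (1 - a_n x)$, completing the induction.
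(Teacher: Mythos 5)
The paper does not prove this statement --- it quotes it as Stanley's theorem --- so I am evaluating your argument on its own terms. Your overall architecture is the classical one: induct on the rank, check that $[\hat{0},x_{n-1}]$ inherits the hypotheses with the truncated modular chain and the same constants $a_1,\ldots,a_{n-1}$, and factor off the modular coatom. The base case, the inheritance argument, and the final computation showing that the complementary factor collapses to $1-a_nx$ (via $\rho(y\vee m)=\rho(y)+n-1\leq n$) are all correct.

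The gap is in your proof of the modular-element factorization, which is the entire mathematical content of the theorem. The map $y\mapsto y\vee z$ is \emph{not} a bijection from $\{y\in L\colon y\wedge m=\hat{0}\}$ onto $\{p\in L\colon p\wedge m=z\}$, even for $m$ modular, and the pointwise multiplicativity $\mu(\hat{0},p)=\mu(\hat{0},z)\,\mu(\hat{0},y)$ also fails. Take $L=M_3$ (rank two, atoms $a,b,c$, all modular) with $m=z=a$: the source $\{\hat{0},b,c\}$ has three elements, the target $\{a,\hat{1}\}$ has two, and both $b$ and $c$ map to $\hat{1}$; moreover $\mu(\hat{0},\hat{1})=2\neq 1=\mu(\hat{0},a)\mu(\hat{0},b)$. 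The same failure occurs in $\Pi_4$ with $m=123|4$ and $z=12|3|4$, where $\mu(\hat{0},124|3)=2$ while $\mu(\hat{0},12|3|4)\mu(\hat{0},14|2|3)=1$. The aggregate identity you want is true --- it is precisely Stanley's Modular Factorization Theorem --- but it holds only after summing with signs over each fiber of $p\mapsto p\wedge m$, not element-by-element, and every known proof requires a genuinely different argument (Stanley's original one runs through the M\"{o}bius algebra of $L$; alternatives use the crosscut theorem or a Weisner-type recursion). The bijection that modularity of $m$ actually provides is the transposition isomorphism $[\hat{0},m]\to[y,y\vee m]$, $w\mapsto w\vee y$, for a \emph{fixed} $y$ with $y\wedge m=\hat{0}$ --- you have fixed the wrong variable. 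As written, the crux of your proof does not go through.
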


\begin{example}
The partition lattice $\Pi_n$ is a supersolvable geometric lattice. Indeed, $\pi_0 \lessdot \pi_1 \lessdot \cdots \lessdot \pi_{n-1} \in \Pi_n$ is a maximal chain of modular elements, where
\[\pi_i \coloneqq \{ \{1,2,\ldots,i+1\},\{i+2\}, \{i+3\},\cdots,\{n\} \}\]
for $i=0,1,\ldots,n-1$. Here $a_i=i$ for $i=1,\ldots,n-1$, so \cref{thm:super_stanley} gives that $\chi^*(\Pi_n;x) = (1-x)(1-2x)\cdots(1-(n-1)x)$, in agreement with what we said earlier.
\end{example}

\begin{remark}
In~\cite{stanley1972supersolvable}, Stanley defined the notion of supersolvability more generally for any finite lattice, not necessarily semimodular. It is possible that the results in this section could be extended to this broader class of (not necessarily semimodular) supersolvable lattices. However, the examples that we know all involve geometric lattices, so it is unclear, at the moment, what this greater generality would buy us.
\end{remark}

Next, we describe a procedure for ``trimming'' a supersolvable semimodular lattice to produce another one. Although this procedure is quite natural, we have not seen it anywhere in the literature. We use this trimming procedure later to guarantee that a certain limit poset is finite type $\mathbb{N}$-graded.

So let $L$ be a supersolvable semimodular lattice with a fixed maximal chain of modular elements $x_0 \lessdot x_1 \lessdot \cdots \lessdot x_n$. Relative to this chain, we define $\nu \colon L \to \mathbb{N}$ by $\nu(x) \coloneqq \min \{i \colon x \leq x_i\}$ for all $x\in L$. And then for a positive integer $k \geq 1$, we define
\[ L^{(k)} \coloneqq \{x \in L\colon \nu(x) - \rho(x) < k\}.\]
For instance, $L^{(1)} = \{x_0,x_1,\ldots,x_n\}$ and $L^{(n)} = L$. Note that $L^{(i)} \subseteq L^{(j)}$ for $i \leq j$.

\begin{lemma} \label{lem:super_trim}
For any $k \geq 1$, the subposet $L^{(k)}$ of $L$ is again a supersolvable semimodular lattice, with maximal chain of modular elements $x_0 \lessdot x_1 \lessdot \cdots \lessdot x_n$.
\end{lemma}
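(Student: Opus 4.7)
The plan is to verify in sequence that $L^{(k)}$ is a lattice, is graded by the restriction of $\rho$ from $L$, is semimodular, and that each $x_i$ remains modular in it. Throughout, write $f(x) \coloneqq \nu(x) - \rho(x)$, so that $L^{(k)} = \{x \in L : f(x) < k\}$, and note that every $x_i$ has $f(x_i) = 0$ and so lies in $L^{(k)}$.

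First I would show that $L^{(k)}$ is closed under the join of $L$: for $x, y \in L^{(k)}$, one has $\nu(x \vee y) = \max(\nu(x), \nu(y))$ (since $x \vee y \leq x_i$ iff $x, y \leq x_i$) while $\rho(x \vee y) \geq \max(\rho(x), \rho(y))$, so $f(x \vee y) \leq \max(f(x), f(y)) < k$. Since $L^{(k)}$ also contains $\hat{0}$, for any $p, q \in L^{(k)}$ the set of their common lower bounds in $L^{(k)}$ is a nonempty finite set closed under joins, hence has a maximum, which is the meet of $p, q$ in $L^{(k)}$. Thus $L^{(k)}$ is a lattice whose joins coincide with those of $L$, though its meets in general do not.

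The main step, and the main obstacle, is showing that $L^{(k)}$ is graded by $\rho|_{L^{(k)}}$. I would prove two sub-claims. (a) Every nonminimum $p \in L^{(k)}$ admits a cover $p' \lessdot_L p$ lying in $L^{(k)}$. Any $p' \lessdot_L p$ satisfies $f(p') \leq f(p) + 1$, which suffices when $f(p) \leq k-2$. When $f(p) = k-1$, set $i \coloneqq \nu(p) \geq 1$; since $p \not\leq x_{i-1}$, the interval $[p \wedge x_{i-1}, p] \subseteq L$ has positive rank, so $p$ has a cover $p'$ from below inside it (which is automatically a cover in $L$). Then $p' \leq x_{i-1}$ forces $\nu(p') \leq i-1$ and $f(p') \leq k-1 < k$. (b) Every cover in $L^{(k)}$ is a cover in $L$. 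Assume by contradiction that $p \lessdot_{L^{(k)}} q$ with $\rho(q) - \rho(p) \geq 2$; I would exhibit some $q' \in L^{(k)}$ with $p < q' \lessdot_L q$. If $f(q) \leq k-2$, any cover of $q$ from below in $[p, q] \subseteq L$ works. The sharp case is $f(q) = k-1$: then $\nu(p) \leq \rho(p) + k - 1 \leq \rho(q) + k - 3 = \nu(q) - 2$, so $p \leq x_{\nu(q)-1}$. Setting $q' \coloneqq q \wedge x_{\nu(q)-1}$, modularity of $x_{\nu(q)-1}$ combined with $q \vee x_{\nu(q)-1} = x_{\nu(q)}$ (which holds because $x_{\nu(q)-1} \lessdot x_{\nu(q)}$) forces $\rho(q') = \rho(q) - 1$, so $q' \lessdot_L q$; the same $f$-computation as in (a) places $q'$ in $L^{(k)}$, and $q' \geq p$ by construction. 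Iterating (a) gives every $p \in L^{(k)}$ a saturated $L^{(k)}$-chain from $\hat{0}$ of length $\rho(p)$, and (b) ensures that all maximal $L^{(k)}$-chains between comparable elements have the $L$-rank difference as length.

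Once $L^{(k)}$ is graded with restricted rank, semimodularity is automatic: joins in $L^{(k)}$ match those in $L$ while $p \wedge_{L^{(k)}} q \leq p \wedge_L q$ only weakens the meet, so the semimodular inequality of $L$ transfers to $L^{(k)}$. For modularity of $x_i$ in $L^{(k)}$, the key is to verify that $x_i \wedge_L p \in L^{(k)}$ for all $p \in L^{(k)}$. When $i \geq \nu(p)$ the meet equals $p$; when $i < \nu(p)$, modularity of $x_i$ in $L$ yields $\rho(x_i \wedge p) = i + \rho(p) - \rho(x_i \vee p)$, and $x_i \vee p \leq x_{\nu(p)}$ forces $\rho(x_i \vee p) \leq \nu(p)$, giving $f(x_i \wedge p) \leq f(p) < k$. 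Hence the meet with $x_i$ agrees in $L^{(k)}$ and in $L$, and the modularity identity $\rho(x_i) + \rho(p) = \rho(x_i \vee p) + \rho(x_i \wedge p)$ carries over directly.
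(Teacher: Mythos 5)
Your proposal is correct in substance and follows the same skeleton as the paper (lattice, gradedness, semimodularity, modularity of the $x_i$), and your join-closure observation and the closure of $L^{(k)}$ under $-\wedge x_i$ are exactly the paper's Claims 1 and 2. Where you genuinely diverge is the gradedness step, which is the crux. The paper handles it by invoking Stanley's theorem that the sublattice of $L$ generated by the modular maximal chain $\{x_0,\ldots,x_n\}$ together with any chain $y_0<\cdots<y_m$ is \emph{distributive}; distributivity puts that sublattice inside $L^{(k)}$ (every element is a join of various $x_i\wedge y_j$, so Claims 1 and 2 apply) and lets one extend the chain to a saturated chain of length $n$ entirely within it. You instead give a direct cover analysis: (a) every nonminimal element of $L^{(k)}$ has a lower cover in $L^{(k)}$, and (b) every cover relation of $L^{(k)}$ is a cover relation of $L$. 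This is more elementary and self-contained (no appeal to the distributive-sublattice theorem), at the price of a boundary case analysis at $f = k-1$; both routes are legitimate, and yours arguably makes clearer exactly where modularity of the chain is used.

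One step in (a) is not justified as written. From ``$[p\wedge x_{i-1},p]$ has positive rank'' you take a lower cover $p'\lessdot p$ inside that interval and then assert $p'\le x_{i-1}$; but an element of $[p\wedge x_{i-1},p]$ satisfies $p'\ge p\wedge x_{i-1}$, and $p'\le x_{i-1}$ would force $p'=p\wedge x_{i-1}$, which only happens if the interval has rank exactly one. That it does have rank one is true, but requires an argument: since $p\le x_{\nu(p)}=x_i$, $p\not\le x_{i-1}$, and $x_{i-1}\lessdot x_i$, one gets $p\vee x_{i-1}=x_i$, and then modularity of $x_{i-1}$ gives $\rho(p\wedge x_{i-1})=\rho(p)+(i-1)-i=\rho(p)-1$, so $p\wedge x_{i-1}\lessdot p$ and you should simply take $p'\coloneqq p\wedge x_{i-1}$. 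This is precisely the computation you carry out correctly in case (b), so the repair is immediate; but as stated, (a) skips it.
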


The rank of an element in $L^{(k)}$ is the same as its rank in $L$. And the join of two elements in $L^{(k)}$ is the same as their join in $L$. But the meet of two elements in $L^{(k)}$ may be different than their meet in $L$, so $L^{(k)}$ is not in general a sublattice of $L$.

\begin{example}
Again consider the partition lattice $\Pi_n$, with maximal chain of modular elements $\pi_0 \lessdot \pi_1 \lessdot \cdots \lessdot \pi_{n-1}$ as described above. Then, for any $k \geq 1$, 
\[\Pi^{(k)}_n = \{ \pi \in \Pi_n \colon \max\{i\colon \textrm{$i$ is in a non-singleton block of $\pi$}\}< n+k-\#\pi\}.\]
(Recall that a block is called a singleton if it has one element. Also, above we use the convention $\max(\emptyset) \coloneqq 0$.) For instance, \cref{fig:partitions_finite} depicts $\Pi^{(2)}_4$. In this figure we use the shorthand of writing a partition $\pi = \{B_1,\ldots,B_k\}$ as~$B_1 \mid B_2 \mid \cdots \mid B_k$. The perceptive reader may compare this figure to \cref{fig:partitions}.
\end{example}

\begin{figure}
\begin{tikzpicture}[scale=0.825]
\node (A) at (0,0) {$1|2|3|4$};

\node (B) at (-1.5,1){$1|23|4$};
\node (C) at (0,1){$12|3|4$};
\node (D) at (1.5,1){$13|2|4$};

\node (E) at (-4.5,2){$1|234$};
\node (F) at (-3,2){$14|23$};
\node (G) at (-1.5,2){$12|34$};
\node (H) at (0,2){$123|4$};
\node (I) at (1.5,2){$124|3$};
\node (J) at (3,2){$13|24$};
\node (K) at (4.5,2){$134|2$};

\node (S) at (0,3){$1234$};

\node (1) at (-7.5,3.75){};
\node (2) at (-7,3.75){};
\node (3) at (-6.5,3.75){};
\node (4) at (-6,3.75){};
\node (5) at (-5.5,3.75){};
\node (6) at (-5,3.75){};
\node (7) at (-4.5,3.75){};
\node (8) at (-4,3.75){};
\node (9) at (-3.5,3.75){};
\node (10) at (-3,3.75){};
\node (11) at (-2.5,3.75){};
\node (12) at (-2,3.75){};
\node (13) at (-1.5,3.75){};
\node (14) at (-1,3.75){};
\node (15) at (-0.5,3.75){};
\node (16) at (0,3.75){};
\node (17) at (0.5,3.75){};
\node (18) at (1,3.75){};
\node (19) at (1.5,3.75){};
\node (20) at (2,3.75){};
\node (21) at (2.5,3.75){};
\node (22) at (3,3.75){};
\node (23) at (3.5,3.75){};
\node (24) at (4,3.75){};
\node (25) at (4.5,3.75){};
\node (26) at (5,3.75){};
\node (27) at (5.5,3.75){};
\node (28) at (6,3.75){};
\node (29) at (6.5,3.75){};
\node (30) at (7,3.75){};
\node (31) at (7.5,3.75){};

\draw (A) -- (B);
\draw (A) -- (C);
\draw (A) -- (D);

\draw (B) -- (E);
\draw (B) -- (F);
\draw (B) -- (H);
\draw (C) -- (G);
\draw (C) -- (H);
\draw (C) -- (I);
\draw (D) -- (H);
\draw (D) -- (J);
\draw (D) -- (K);

\draw (E) -- (S);
\draw (F) -- (S);
\draw (G) -- (S);
\draw (H) -- (S);
\draw (I) -- (S);
\draw (J) -- (S);
\draw (K) -- (S);
\end{tikzpicture}
\caption{The ``trimmed'' partition lattice $\Pi^{(2)}_4$.} \label{fig:partitions_finite}
\end{figure}
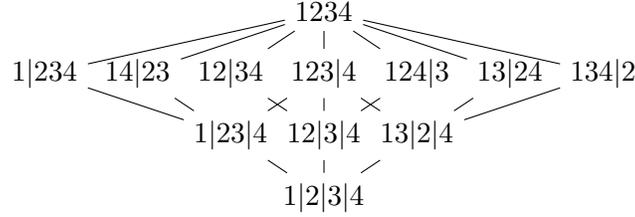

\begin{proof}[Proof of \cref{lem:super_trim}]
We start with two important claims.

{\bf Claim 1}: For $x, y \in L^{(k)}$, the join $x \vee y$, taken in $L$, is in $L^{(k)}$.

{\bf Claim 2}: For $x \in L^{(k)}$ and $i=0,1,\ldots,n$, the meet $x \wedge x_i$, taken in $L$, is in $L^{(k)}$.

Proof of Claim 1: Let $x,y \in L^{(k)}$. Set $c\coloneqq \max(\nu(x),\nu(y))$, $d\coloneqq \max(\rho(x),\rho(y))$. Since $\nu(x) - \rho(x) < k$ and $\nu(y) - \rho(y) < k$, we have $c-d < k$. And since $x_c$ is an upper bound for $x$ and $y$, we have $\nu(x\vee y) \leq c$. Similarly, since $x\vee y$ is greater than or equal to $x$ and to $y$, we have $\rho(x\vee y) \geq d$. Thus, $\nu(x\vee y)-\rho(x \vee y) \leq c - d < k$. 

Proof of Claim 2: Let $x \in L^{(k)}$ and $i \in \{0,1,\ldots,n-1\}$. If $x \leq x_i$, then $x \wedge x_i = x$ and the claim is clear. So suppose that $x \not \leq  x_i$. Since $x_{\nu(x)}$ is an upper bound for $x$ and $x_i$, we have $\rho(x \vee x_i) \leq \nu(x)$. And since $x_i$ is modular, we have $\rho(x) + \rho(x_i) = \rho(x \vee x_i) + \rho(x \wedge x_i)$, i.e., $\rho(x \wedge x_i) = \rho(x) + \rho(x_i) - \rho(x \vee x_i) \geq \rho(x) + i - \nu(x)$. Meanwhile, since $x_i$ is an upper bound for $x \wedge x_i$, we have $\nu(x \wedge x_i) \leq i$. Thus, $\nu(x \wedge x_i) - \rho(x \wedge x_i) \leq \nu(x) - \rho(x) < k$.

We proceed to prove the lemma. First, let us explain why $L^{(k)}$ is a lattice. Notice Claim 1 already implies that $L^{(k)}$ is a sub-join semilattice of $L$. Then, since $L^{(k)}$ is a finite join semilattice with a $\hat{0}$, it is a lattice (see, e.g., \cite[Proposition~3.3.1]{stanley2012ec1}).

Next, let us explain why $L^{(k)}$ is graded. Let $y_0 < y_1 < y_2 < \cdots < y_m \in L^{(k)}$ be any chain. We will show that we can extend this chain to a chain of length $n$. To that end, let~$L'$ denote the sublattice of $L$ generated by $\{x_0,\ldots,x_n,y_0,\ldots,y_m\}$. An important result of Stanley~\cite[Proposition 2.1]{stanley1972supersolvable} says that $L'$ is a distributive lattice (in fact, this leads to an alternative characterization of supersolvable lattices). Since~$L'$ is distributive, every element in $L'$ can be written as a join of elements of the form $x_i \wedge y_j$. Hence, Claims 1 and 2 combine to imply that $L'\subseteq L^{(k)}$. Then, again since $L'$ is distributive and hence graded, the chain $y_0 < y_1 < y_2 < \cdots < y_m$ can be extended in $L'$ (and thus in $L^{(k)}$) to a maximal chain of length $n$. So indeed $L^{(k)}$ is graded, and in fact each element in $L^{(k)}$ has the same rank as it does in $L$.

Next, we show that $L^{(k)}$ is semimodular. Let $x,y \in L^{(k)}$. As mentioned, the join of $x$ and $y$ in $L^{(k)}$ is the same as its join in $L$. On the other hand, the meet of $x$ and~$y$ in $L^{(k)}$ can only be lower than in $L$. So, the quantity $\rho(x \vee y) + \rho(x \wedge y)$ is smaller when computed in $L^{(k)}$ versus $L$. Thus, the inequality $\rho(x) + \rho(y) \geq \rho(x \vee y) + \rho(x \wedge y)$ remains true in $L^{(k)}$, and $L^{(k)}$ is semimodular. 

Finally, we explain why $L^{(k)}$ is supersolvable, with maximal chain of modular elements $x_0 \lessdot x_1 \lessdot \cdots \lessdot x_n$. Let $y \in L^{(k)}$ and $i \in \{0,1,\ldots,n-1\}$. Claims 1 and 2 imply that $x_i \vee y$ and $x_i \wedge y$ are the same in $L^{(k)}$ as they are in $L$, so  $\rho(x_i) + \rho(y) = \rho(x_i \vee y) + \rho(x_i \wedge y)$ remains true, and thus $x_i$ is modular in $L^{(k)}$.
\end{proof}

\subsection{Uniform sequences of geometric lattices} \label{subsec:uniform_seqs}

A sequence of finite posets $P_0, P_1, P_2, \ldots$ is a \dfn{uniform sequence} if (for each $n=0,1,\ldots$):
\begin{itemize}
\item $P_n$ is $n$-graded;
\item $[a,\hat{1}_{P_n}]$ is isomorphic to $P_{n-1}$ for all atoms $a \in P_n$.
\end{itemize}
From now on in this section, $L_0, L_1, \ldots$ is a uniform sequence of geometric lattices. 

Uniform sequences also enjoy many remarkable enumerative properties, as first observed by Dowling~\cite{dowling1973class}. For $0\leq j \leq i$, define the numbers $V(i,j)$ and $v(i,j)$ by
\[ \sum_{j=0}^{i} V(i,j) x^{i-j} = F(L_i; x); \qquad  \sum_{j=0}^{i} v(i,j) x^{i-j} = \chi^*(L_i; x).\]
These numbers $V(i,j)$ and $v(i,j)$ are called the Whitney numbers of the second and first kind, respectively, for our sequence of lattices $L_n$. By convention, let us also declare $V(i,j) \coloneqq v(i,j) \coloneqq 0$ for $j > i$.

\begin{thm}[{Dowling~\cite[Theorem 6]{dowling1973class}; see also~\cite[Exercise~3.130]{stanley2012ec1}}] \label{thm:uniform_dowling}
The infinite, lower unitriangular matrices $[V(i,j)]_{0\leq i,j\leq \infty}$ and $[v(i,j)]_{0 \leq i,j \leq \infty}$ are inverses.
\end{thm}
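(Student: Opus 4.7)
My plan is to interpret the matrix product $\sum_j V(i,j)\,v(j,k)$ as a double sum over pairs $p \leq q'$ in $L_i$ and then collapse it using the defining recursion of the Möbius function. The nontrivial input is a structural lemma that upgrades the uniform sequence hypothesis (which a priori concerns only intervals above atoms) to all upper intervals: for every $n$ and every $p \in L_n$ of rank $k$, the interval $[p, \hat{1}_{L_n}]$ is isomorphic to $L_{n-k}$. I would prove this lemma by induction on $k$. The cases $k = 0$ and $k = 1$ are trivial and follow directly from the definition of a uniform sequence. For $k \geq 2$, atomicity of the geometric lattice $L_n$ supplies an atom $a \leq p$; the uniform hypothesis gives an isomorphism $[a, \hat{1}_{L_n}] \cong L_{n-1}$ that carries $p$ to an element of rank $k-1$ in $L_{n-1}$, and the inductive hypothesis then identifies $[p, \hat{1}_{L_n}]$ with $L_{(n-1)-(k-1)} = L_{n-k}$.

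Next I would unpack the definitions. Comparing coefficients of $x^{i-j}$ yields $V(i,j) = \#\{p \in L_i : \rho(p) = i-j\}$ and $v(j,k) = \sum_{q \in L_j,\, \rho(q) = j-k} \mu_{L_j}(\hat{0}_{L_j}, q)$. Substituting produces a triple sum over $j$, over such $p$, and over such $q$. For each fixed $p \in L_i$ of rank $i-j$, the lemma identifies $L_j$ with $[p, \hat{1}_{L_i}]$; since Möbius values depend only on the isomorphism type of an interval, the inner sum over $q \in L_j$ becomes the sum over $q' \in [p, \hat{1}_{L_i}]$ of rank $i-k$ of $\mu_{L_i}(p, q')$. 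The index $j$ disappears after this reinterpretation, and swapping the order of summation collapses the expression to
\[ \sum_j V(i,j)\,v(j,k) \;=\; \sum_{\substack{q' \in L_i \\ \rho(q') = i-k}} \; \sum_{p \leq q'} \mu_{L_i}(p, q'). \]
The inner sum equals $[q' = \hat{0}_{L_i}]$ by the dual form of the Möbius recursion $\sum_{p \leq q'} \mu(p, q') = \delta_{\hat{0},\, q'}$, which forces $\rho(q') = 0$ and hence $i = k$, leaving a single surviving term equal to $1$. This establishes orthogonality.

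The main obstacle in this plan is the structural lemma, which is really the place where all the hypotheses are used: the uniform condition by itself governs only one-step upward intervals, and one must inductively cascade it to higher-rank elements, using crucially that every nonzero element of $L_n$ lies above some atom (i.e., atomicity, hence the geometric hypothesis). Once the lemma is in hand, the rest is a bookkeeping exercise: the putatively infinite matrix product is effectively a finite sum because both $[V(i,j)]$ and $[v(j,k)]$ are lower unitriangular, so no convergence subtleties intervene.
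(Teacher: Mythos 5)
The paper does not actually prove this statement: it is quoted from Dowling~\cite[Theorem 6]{dowling1973class} (see also the exercise in Stanley cited alongside it), so there is no in-paper argument to compare against. Judged on its own, your proof is correct and is essentially the standard argument for Whitney-number orthogonality in a uniform sequence. The structural lemma you isolate --- that $[p,\hat{1}_{L_n}]\cong L_{n-\rho(p)}$ for \emph{every} $p\in L_n$, not just atoms --- is exactly the right input (the paper itself records this observation at the start of \S3.3), and your induction establishing it is sound; the rank bookkeeping works because isomorphisms of finite graded posets preserve rank, and the Möbius value $\mu_{L_j}(\hat{0},q)$ transports to $\mu_{L_i}(p,q')$ since the Möbius function depends only on the isomorphism type of the interval. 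The collapse of $\sum_j V(i,j)v(j,k)$ to $\sum_{\rho(q')=i-k}\sum_{p\le q'}\mu(p,q')=\delta_{i,k}$ via the dual Möbius recursion is correct. Two minor remarks. First, atomicity of the geometric lattices is not actually needed for your lemma: in any finite graded lattice, every element $p>\hat{0}$ lies above an atom (take the second element of a maximal chain from $\hat{0}$ to $p$), so the ``geometric'' hypothesis plays no essential role here. Second, you verify only the one-sided identity $[V][v]=I$; to conclude the matrices are inverses you should note (or it is standard) that an infinite lower unitriangular matrix has a unique two-sided lower unitriangular inverse, so a one-sided inverse is automatically two-sided. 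Neither point affects the validity of the argument.
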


\begin{example}
Taking $L_n = \Pi_{n+1}$ gives a uniform sequence of geometric lattices. In this case, we have $V(i,j)=S(i+1,j+1)$ and $v(i,j)=s(i+1,j+1)$, the Stirling numbers of the second and first kind. Thus, the two infinite, lower unitriangular matrices in \cref{thm:uniform_dowling} are:
{\small \[ [S(n,k)]_{1\leq n,k \leq \infty} = \begin{pmatrix} 1 & 0 & 0 & 0 & \cdots \\ 1 & 1 & 0 & 0 & \cdots \\ 1 & 3 & 1 & 0 & \cdots \\ 1 & 7 & 6 & 1 & \cdots \\ \vdots & \vdots & \vdots & \vdots & \ddots \end{pmatrix} \textrm{ and } [s(n,k)]_{1\leq n,k \leq \infty} = \begin{pmatrix} 1 & 0 & 0 & 0 & \cdots \\ -1 & 1 & 0 & 0 & \cdots \\ 2 & -3 & 1 & 0 & \cdots \\ -6 & 11 & -6 & 1 & \cdots \\ \vdots & \vdots & \vdots & \vdots & \ddots \end{pmatrix}.\]}
It is a classical result that these matrices of Stirling numbers are inverses.
\end{example}

When the geometric lattices $L_n$ in our uniform sequence are also supersolvable, \cref{thm:super_stanley,thm:uniform_dowling} combine to yield a very strong enumerative corollary, as we now explain. First, we need a preparatory result about symmetric polynomials. Recall that the complete homogeneous and elementary symmetric polynomials in variables $x_1,\ldots,x_n$ are, respectively, given by
\begin{align*}
h_k(x_1,\ldots,x_n) &\coloneqq \sum_{1 \leq i_1 \leq i_2 \leq \cdots \leq i_k \leq n} x_{i_1}x_{i_2}\cdots x_{i_k}; \\
e_k(x_1,\ldots,x_n) &\coloneqq \sum_{1 \leq i_1 < i_2 < \cdots < i_k \leq n} x_{i_1}x_{i_2}\cdots x_{i_k},
\end{align*}
for $k > 0$. We also use the conventions $h_0(x_1,\ldots,x_n) \coloneqq e_0(x_1,\ldots,x_n) \coloneqq 1$ and $h_k(x_1,\ldots,x_n) \coloneqq e_k(x_1,\ldots,x_n) \coloneqq 0$ for $k < 0$.

\begin{prop} \label{prop:sym_poly_matrices}
Define infinite, lower unitriangular matrices $A=[a_{i,j}]_{0\leq i,j \leq \infty}$ and $B=[b_{i,j}]_{0 \leq i,j \leq \infty}$ by letting
\begin{align*}
a_{i,j} &\coloneqq h_{i-j}(x_1,\ldots,x_{j+1}); \\
b_{i,j} &\coloneqq (-1)^{i-j}e_{i-j}(x_1,\ldots,x_i),
\end{align*}
for all $0 \leq i,j$. Then $A$ and $B$ are inverses.
\end{prop}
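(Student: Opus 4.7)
The plan is to prove $AB = I$ entry by entry via a single auxiliary polynomial identity, and to deduce the companion $BA = I$ from uniqueness of inverses for lower unitriangular matrices. For each fixed row index $i$, I will introduce the generating polynomial
\[ R_i(t) \coloneqq \sum_{j=0}^{i} (AB)_{i,j}\, t^j = \sum_{k=0}^{i} a_{i,k} \sum_{j=0}^{k} b_{k,j}\, t^j. \]
The inner sum over $j$ is the well-known generating polynomial for the elementary symmetric polynomials, namely
\[ \sum_{j=0}^{k}(-1)^{k-j} e_{k-j}(x_1,\ldots,x_k)\, t^j = \prod_{\ell=1}^{k}(t-x_\ell). \]
Substituting yields
\[ R_i(t) = \sum_{k=0}^{i} h_{i-k}(x_1,\ldots,x_{k+1}) \prod_{\ell=1}^{k}(t-x_\ell), \]
so the claim $(AB)_{i,j} = \delta_{i,j}$ is equivalent to the single polynomial identity $R_i(t) = t^i$ for every $i\geq 0$.

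To establish this identity I will invoke Newton's forward divided-difference interpolation formula. For distinct nodes $x_1,\ldots,x_{i+1}$, the unique polynomial of degree at most $i$ agreeing with a function $f$ at these nodes is
\[ \sum_{k=0}^{i} [x_1,\ldots,x_{k+1}]f \cdot \prod_{\ell=1}^{k}(t-x_\ell), \]
where $[x_1,\ldots,x_{k+1}]f$ denotes the divided difference. When $f(t) = t^i$, this interpolant equals $f$ itself, and the classical identity $[x_1,\ldots,x_{k+1}]\,t^i = h_{i-k}(x_1,\ldots,x_{k+1})$—proved by an easy induction on $k$ from the recursive definition of divided differences, the base case being $[x_1,x_2]\,t^i = (x_1^i-x_2^i)/(x_1-x_2) = h_{i-1}(x_1,x_2)$—converts Newton's formula into exactly $R_i(t) = t^i$. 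The resulting equation holds generically when the $x_\ell$ are distinct, but since both sides are polynomials in $t, x_1,\ldots,x_{i+1}$, it promotes at once to an identity of polynomials in the $x_\ell$.

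Finally, $AB = I$ implies $BA = I$ because each truncation of $A$ to its upper-left $(n+1)\times(n+1)$ submatrix is a lower unitriangular, hence invertible, finite matrix whose two-sided inverse is the analogous truncation of $B$; lower triangularity ensures that truncation commutes with the matrix product at each position.

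The main obstacle will be spotting the Newton-interpolation framing: once the sum over $j$ collapses to $\prod_{\ell=1}^{k}(t-x_\ell)$, the remaining expression is exactly the Newton divided-difference expansion of $t^i$, and the identity $[x_1,\ldots,x_{k+1}]\,t^i = h_{i-k}(x_1,\ldots,x_{k+1})$ closes the loop. A pedestrian alternative would be to prove $R_i(t) = t^i$ directly by induction on $i$ using the recursion $h_m(x_1,\ldots,x_{n+1}) = h_m(x_1,\ldots,x_n) + x_{n+1}\,h_{m-1}(x_1,\ldots,x_{n+1})$ and comparing coefficients of powers of $t$, but this is considerably messier and obscures the underlying reason for the identity.
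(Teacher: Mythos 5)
Your proof is correct, but it takes a genuinely different route from the paper's. The paper computes the product $C=BA$ and shows each off-diagonal entry $c_{n,m}$ is the coefficient of $z^{n-m}$ in $\prod_{i=1}^{n}(1-x_iz)\prod_{i=1}^{m+1}(1-x_iz)^{-1}$, which after cancellation is a polynomial in $z$ of degree $n-m-1$, so the coefficient vanishes; this is a one-line manipulation with the standard generating functions $\sum_k (-1)^k e_k z^k$ and $\sum_k h_k z^k$ in an auxiliary variable. You instead compute $AB$, collapse the inner sum to $\prod_{\ell=1}^{k}(t-x_\ell)$, and recognize the resulting row polynomial as the Newton divided-difference expansion of $t^i$ at the nodes $x_1,\dots,x_{i+1}$, using the classical identity $[x_1,\dots,x_{k+1}]\,t^i = h_{i-k}(x_1,\dots,x_{k+1})$. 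Both arguments are complete and of comparable length; the paper's is a more compact symmetric-function computation, while yours supplies a conceptual explanation (the $h$'s are divided differences of monomials, so the alternating sum is forced to telescope to $t^i$) at the cost of importing the interpolation machinery and the genericity-then-polynomial-identity step. One point in your favor: since these are infinite matrices, verifying only one of $AB=I$ or $BA=I$ does not formally suffice without comment, and your truncation argument (lower triangularity makes truncation to finite principal blocks a homomorphism for these products) cleanly supplies the other identity; the paper verifies $BA=I$ and leaves the corresponding remark implicit.
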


\begin{proof}
Define the matrix $C=[c_{i,j}]_{0\leq i,j \leq \infty}$ by $C = B A$. Clearly, $c_{n,m} =0$ for $m > n$. For $m\leq n$,
\[ c_{n,m} = \sum_{i=0}^{n-m} (-1)^{i}e_i(x_1,\ldots,x_n) h_{n-m-i}(x_1,\ldots,x_{m+1}). \]
If $m=n$, this certainly equals $1$. So now suppose $m < n$. Then $c_{n,m}$ is the coefficient of $z^{n-m}$ in
\[\left(\sum_{k=0}^{\infty} (-1)^k e_k(x_1,\ldots,x_n) z^k \right) \left(\sum_{k=0}^{\infty} h_k(x_1,\ldots,x_{m+1}) z^k \right) = \prod_{i=1}^{n} (1-x_iz) \prod_{i=1}^{m+1} \frac{1}{1-x_iz},\]
which is $0$ (since the right-hand side is a polynomial in $z$ of degree $n-m-1$). Therefore, $C$ is the identity matrix, and $A$ and $B$ are inverses.
\end{proof}

We return to our uniform sequence of geometric lattices $L_0,L_1,\ldots$. Set 
\[a_n \coloneqq \#\{\textrm{atoms }s\in L_n\} - \#\{\textrm{atoms }s\in L_{n-1}\},\]
for $n=1,2,\ldots$.

\begin{cor} \label{cor:sym_poly_lattices}
Suppose that all the geometric lattices $L_n$ in our uniform sequence are supersolvable. Then their Whitney numbers of the second and first kind are
\begin{align*}
V(i,j) &= h_{i-j}(a_1,\ldots,a_{j+1}); \\
v(i,j) &= (-1)^{i-j}e_{i-j}(a_1,\ldots,a_i),
\end{align*}
for $0 \leq j \leq i$. In particular,  $\chi^*(L_n;x) = (1-a_1x)(1-a_2x)\cdots (1-a_n x)$ for all $n$.
\end{cor}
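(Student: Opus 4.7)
The plan is to prove the corollary in three steps: establish the factorization $\chi^*(L_n; x) = \prod_{k=1}^n (1 - a_k x)$ by induction on $n$, then read off the formula for $v(i,j)$ from it, and finally invoke \cref{thm:uniform_dowling} together with \cref{prop:sym_poly_matrices} to obtain the formula for $V(i,j)$.

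For the factorization, I would induct on $n$. The base cases $n=0, 1$ are easy, noting that uniformity forces $L_1 = \{\hat{0}, \hat{1}\}$ and hence $a_1 = A_1 = 1$. For the inductive step, fix a modular chain $\hat{0} = x_0 \lessdot x_1 \lessdot \cdots \lessdot x_n$ of $L_n$. \Cref{thm:super_stanley} gives $\chi^*(L_n; x) = \prod_{k=1}^n (1 - \alpha_k x)$ for positive integers $\alpha_k$ (the Stanley numbers of the chain). Uniformity gives $[x_1, \hat{1}_{L_n}] \cong L_{n-1}$, and since modularity transfers to upper intervals the sub-chain $x_1 \lessdot \cdots \lessdot x_n$ is modular in this upper interval; applying \cref{thm:super_stanley} there together with the inductive hypothesis identifies $\chi^*([x_1, \hat{1}_{L_n}]; x) = \chi^*(L_{n-1}; x) = \prod_{k=1}^{n-1}(1 - a_k x)$. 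The key step is to show the divisibility $\chi^*(L_{n-1}; x) \mid \chi^*(L_n; x)$ in $\mathbb{Z}[x]$, i.e., that the Stanley multiset of the upper interval is a sub-multiset of the Stanley multiset of $L_n$. Once this is granted, the quotient is a linear polynomial $1 - cx$, and matching the coefficient of $x$ on both sides gives $c = A_n - A_{n-1} = a_n$, completing the induction.

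Step 2 is then immediate: expanding $\prod_{k=1}^i (1 - a_k x) = \sum_{j=0}^{i} (-1)^{i-j}\, e_{i-j}(a_1, \ldots, a_i)\, x^{i-j}$ yields $v(i, j) = (-1)^{i-j}\, e_{i-j}(a_1, \ldots, a_i)$. For Step 3, by \cref{thm:uniform_dowling} the matrix $[V(i,j)]$ is the inverse of $[v(i,j)]$; since $[v(i,j)]$ is precisely the matrix $B$ of \cref{prop:sym_poly_matrices} under the substitution $x_k := a_k$, its inverse is the matrix $A$ of that same proposition, yielding $V(i, j) = h_{i-j}(a_1, \ldots, a_{j+1})$.

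The hard part will be the divisibility claim in Step 1. A natural approach is via Stanley's modular coatom factorization applied to $x_{n-1}$ in $L_n$: this produces $\chi^*(L_n; x) = \chi^*([\hat{0}, x_{n-1}]; x) \cdot (1 - \alpha_n x)$, so it would suffice to verify that $\chi^*([\hat{0}, x_{n-1}]; x) = \chi^*(L_{n-1}; x)$. In all the classical uniform sequences (Boolean, subspace, partition, and Dowling lattices) the lower interval $[\hat{0}, x_{n-1}]$ is in fact lattice-isomorphic to $L_{n-1}$, which makes the step immediate; more generally, supersolvability and uniformity together should impose enough rigidity on the recursive structure of the $L_n$ to force the agreement of characteristic polynomials even when the lattices themselves need not coincide.
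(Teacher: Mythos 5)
Your overall architecture (factor $\chi^*(L_n;x)$, read off $v(i,j)$, then invert via \cref{thm:uniform_dowling} and \cref{prop:sym_poly_matrices}) is exactly the paper's, and Steps 2 and 3 are complete and correct as written. But Step 1 has a genuine gap, and you have located it yourself: everything reduces to showing $\chi^*([\hat{0},x_{n-1}];x)=\chi^*(L_{n-1};x)$, and at that point you only verify the claim in the classical examples and assert that supersolvability plus uniformity ``should impose enough rigidity'' in general. That is not a proof, and it is precisely the one nontrivial point of the argument. Note also that your detour through the upper interval $[x_1,\hat{1}_{L_n}]$ does not help here, since $x_1\leq x_{n-1}$ and so that interval bears no direct relation to the lower interval $[\hat{0},x_{n-1}]$ you need to control.

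The missing idea is a complement argument. Since $L_n$ is atomic, $\hat{1}$ is the join of the atoms, so some atom $s$ satisfies $s\not\leq x_{n-1}$; then $s\vee x_{n-1}=\hat{1}$ and $s\wedge x_{n-1}=\hat{0}$, i.e., $s$ is a complement of the modular coatom $x_{n-1}$. Modularity of $x_{n-1}$ gives the canonical isomorphism $[s,\hat{1}]\to[\hat{0},x_{n-1}]$, $y\mapsto y\wedge x_{n-1}$, and uniformity gives $[s,\hat{1}]\cong L_{n-1}$; hence $[\hat{0},x_{n-1}]\cong L_{n-1}$ as lattices (not merely at the level of characteristic polynomials), and by induction $[\hat{0},x_i]\cong L_i$ for all $i$. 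With that in hand one does not even need your divisibility detour: the quantity $\#\{\textrm{atoms }t\in L_n\colon t\leq x_i,\ t\not\leq x_{i-1}\}$ appearing in \cref{thm:super_stanley} is exactly $\#\{\textrm{atoms of }L_i\}-\#\{\textrm{atoms of }L_{i-1}\}=a_i$, and the factorization $\chi^*(L_n;x)=(1-a_1x)\cdots(1-a_nx)$ follows in one application of \cref{thm:super_stanley}. So your plan is salvageable, but the ``rigidity'' you were hoping for has a concrete source --- atomicity of geometric lattices combined with the standard modular-complement isomorphism --- and without supplying it the induction does not close.
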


\begin{proof}
Fix $n \geq 1$. Our first goal is to show $\chi^*(L_n;x) = (1-a_1x)(1-a_2x)\cdots (1-a_n x)$. So let $\hat{0}=x_0 \lessdot x_1 \lessdot \cdots \lessdot x_n=\hat{1}$ be a maximal chain of modular elements in~$L_n$. Consider the modular coatom $x_{n-1}$. Because $\hat{1}$ is the join of the atoms in $L_n$ (since~$L_n$ is atomic), there must be some atom $s\in L_n$ with $s \not \leq x_{n-1}$. This $s$ is a complement of $x_{n-1}$, i.e., $s \vee x_{n-1} = \hat{1}$ and $s \wedge x_{n-1} = \hat{0}$. Since $x_{n-1}$ is modular, there is thus a canonical isomorphism from $[s,\hat{1}]$ to $[0,x_{n-1}]$ given by $x \mapsto x \wedge x_{n-1}$ (see~\cite[\S4.2]{birkhoff1967lattice}). But, by assumption, $[s,\hat{1}]$ is isomorphic to $L_{n-1}$, so $[0,x_{n-1}]$ is isomorphic to $L_{n-1}$ as well. Then by induction, each $[0,x_i]$ is isomorphic to $L_i$. Thus, $\#\{\textrm{atoms } s \in L_n \colon s \leq x_i, s \not \leq x_{i-1}\}= a_i$. So by \cref{thm:super_stanley}, we indeed have $\chi^*(L_n;x) = (1-a_1x)(1-a_2x)\cdots (1-a_n x)$.

The previous paragraph tells us that $v(i,j) = (-1)^{i-j}e_{i-j}(a_1,\ldots,a_i)$ for all $i,j$. We know from \cref{thm:uniform_dowling} that the matrices $[V(i,j)]$ and $[v(i,j)]$ are inverses. But we also know from \cref{prop:sym_poly_matrices} that the inverse of the matrix $[v(i,j)]$ is the matrix $[h_{i-j}(a_1,\ldots,a_{j+1})]$. We conclude $V(i,j) = h_{i-j}(a_1,\ldots,a_{j+1})$ for all $i,j$.
\end{proof}

\begin{example}
Let us continue our running example with $L_n = \Pi_{n+1}$. As we saw earlier, these partition lattices are supersolvable and we have $a_n = n$ in this case. So \cref{cor:sym_poly_lattices} tells us that
\[S(n,k) = h_{n-k}(1,2,\ldots,k); \qquad s(n,k) = (-1)^{n-k} e_{n-k}(1,2,\ldots,n-1).\]
These are classical formulas for the Stirling numbers.
\end{example}

\subsection{Uniform sequences of supersolvable geometric lattices}

Continue to fix a uniform sequence of geometric lattices $L_0, L_1,\ldots$. Observe that, by induction, the interval $[x,\hat{1}]\subseteq L_n$ is isomorphic to $L_{n-\rho(x)}$ for all~$x \in L_n$. So, in a sense, $L_n$ is as close to being upho as a finite graded lattice can be: the principal order filters for all elements \emph{of the same rank} are isomorphic. It is therefore reasonable to try to build an upho lattice by taking a limit of the $L_n$ in some way. This is indeed what we will do, but to do it correctly requires some technical precision.

As we have already hinted, in order to take a limit of the $L_n$ we will need to combine the notion of uniformity with that of supersolvability. But we will also need to make sure that the \emph{way} our lattices are supersolvable is compatible with the way they form a uniform sequence. We saw in the proof of \cref{cor:sym_poly_lattices} that when the~$L_n$ are supersolvable there are many isomorphic copies of $L_{n-1}$ sitting inside of~$L_n$: for each atom $s \in L_n$, the ``upper'' interval~$[s,\hat{1}_{L_n}]$ is isomorphic to $L_{n-1}$, and also for each modular coatom $t \in L_n$, the ``lower'' interval~$[\hat{0}_{L_n},t]$ is isomorphic to $L_{n-1}$. We will need to \emph{fix} all of these isomorphisms and make sure they are compatible with one another.

So, abusing terminology, by a \dfn{uniform sequence of supersolvable geometric lattices} we will mean a uniform sequence of geometric lattices $L_0,L_1,\ldots$ together with (for each $n=0,1,\ldots$):
\begin{itemize}
\item an isomorphism $\theta_s\colon [s,\hat{1}_{L_n}]\to L_{n-1}$ for each atom $s\in L_n$;
\item an embedding (isomorphism onto its image) $\iota_n\colon L_n\to L_{n+1}$,
\end{itemize}
satisfying (for each $n=0,1,\ldots$):
\begin{itemize}
\item $(\iota_{n-1} \circ \theta_s) (x) = (\theta_{\iota_n(s)} \circ \iota_n)(x)$ for each atom $s\in L_n$ and all $x \in [s,\hat{1}_{L_n}]$;
\item the image of $\iota_n$ is $[\hat{0}_{L_{n+1}},t_{n}]$, where $t_{n}\in L_{n+1}$ is a modular coatom.
\end{itemize}
The requirement that the image of $\iota_n$ is $[\hat{0}_{L_{n+1}},t_{n}]$ implies in particular that all the embeddings $\iota_n$ are rank-preserving, i.e., $\rho(\iota_n(x)) =\rho(x)$ for all $x \in L_n$.

\begin{remark}
Notice how in the definition of uniform sequence we required the ``upper'' intervals $[s,\hat{1}_{L_n}]$ to be isomorphic to $L_{n-1}$ for \emph{all} atoms $s\in L_{n}$, whereas here we only require that there be \emph{some} (modular) coatom $t_{n-1} \in L_n$ for which the ``lower'' interval $[\hat{0}_{L_n},t_{n-1}]$ is isomorphic to $L_{n-1}$. This is a crucial distinction!
\end{remark}

\begin{example} \label{ex:partition_maps}
Let us continue to examine the case $L_n=\Pi_{n+1}$. We can upgrade this sequence to a uniform sequence of supersolvable geometric lattices by defining embeddings $\iota_n\colon \Pi_{n+1} \to \Pi_{n+2}$ and isomorphisms $\theta_s\colon [s,\hat{1}_{\Pi_{n+1}}]\to \Pi_n$ as follows. First of all, we set $\iota_n(\pi) \coloneqq \pi \cup \{\{n+2\}\}$ for all $\pi \in \Pi_{n+1}$. Next, note that any atom $s \in \Pi_{n+1}$ has a unique non-singleton block, of the form $\{i,j\}$ for~$1 \leq i < j \leq n+1$. Let us denote this atom by $s_{i,j}$. Then for $\pi = \{B_1,\ldots,B_m\} \in [s_{i,j},\hat{1}_{\Pi_{n+1}}]$ we set $\theta_{s_{i,j}}(\pi) \coloneqq \bigcup_{\ell=1}^{m} \{f(k) \colon k \in B_{\ell} \setminus \{j\}\},$
where $f(k)$ is $k$ if $k < j$ and $k-1$ if~$k > j$. In other words, to obtain $\theta_{s_{i,j}}(\pi)$ from~$\pi$, we delete $j$ from whichever block it appears in (necessarily together with $i$) and then re-index by subtracting one from all numbers greater than $j$. It is straightforward to verify that these $\iota_n$ and $\theta_s$ satisfy the requirements listed above.
\end{example}

From now on in this section, let us assume moreover that our sequence $L_0, L_1, \ldots$ is a uniform sequence of \emph{supersolvable} geometric lattices. So we now also have fixed embeddings $\iota_n\colon L_n \to L_{n+1}$, and isomorphisms $\theta_s\colon [s,\hat{1}_{L_n}]\to L_{n-1}$ for each atom~$s\in L_n$. First of all, let us justify our terminology by explaining how this additional structure indeed forces the $L_n$ to be supersolvable geometric lattices.

\begin{prop} \label{prop:super_seq}
For $n=0,1,\ldots$, the geometric lattice $L_n$ is supersolvable.
\end{prop}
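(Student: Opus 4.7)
The plan is to induct on $n$, using the modular coatom $t_{n-1}\in L_n$ provided by the image of $\iota_{n-1}$ as the top of a chain of modular elements, and then transporting a modular chain for $L_{n-1}$ down into $[\hat{0}_{L_n},t_{n-1}]$ via $\iota_{n-1}$.

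More concretely, the base case $n=0$ (or $n=1$) is immediate since $L_0$ has a single element (and $L_1$ is a two-element chain), so it is trivially supersolvable. For the inductive step, assume $L_{n-1}$ has a maximal chain of modular elements $\hat{0}_{L_{n-1}} = y_0 \lessdot y_1 \lessdot \cdots \lessdot y_{n-1} = \hat{1}_{L_{n-1}}$. Applying the rank-preserving embedding $\iota_{n-1}\colon L_{n-1}\to L_n$, we obtain a chain
\[ \hat{0}_{L_n} = \iota_{n-1}(y_0) \lessdot \iota_{n-1}(y_1) \lessdot \cdots \lessdot \iota_{n-1}(y_{n-1}) = t_{n-1} \]
in $L_n$, which we then extend by $\hat{1}_{L_n}$ at the top. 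The element $\hat{1}_{L_n}$ is trivially modular, and $t_{n-1}$ is modular in $L_n$ by hypothesis. For the remaining elements $\iota_{n-1}(y_i)$ with $i<n-1$, the point is that $\iota_{n-1}$ is a lattice isomorphism from $L_{n-1}$ onto the interval $[\hat{0}_{L_n},t_{n-1}]$, so $\iota_{n-1}(y_i)$ is modular inside $[\hat{0}_{L_n},t_{n-1}]$. Then \cref{prop:mod_trans} (modularity is transitive on lower intervals in a semimodular lattice) promotes $\iota_{n-1}(y_i)$ from modular in $[\hat{0}_{L_n},t_{n-1}]$ to modular in $L_n$, using that $t_{n-1}$ itself is modular in $L_n$.

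This yields the desired maximal chain of modular elements in $L_n$, completing the induction. The only subtle point, which is really the key observation, is recognizing that we should not try to build a chain of modular elements from scratch inside $L_n$ (the \emph{upper} intervals in the uniform sequence structure need not be lower intervals of a modular coatom) but instead exploit the single extra piece of data provided by the embedding $\iota_{n-1}$, namely that $t_{n-1}$ is modular; everything else is then inherited from the inductive hypothesis via \cref{prop:mod_trans}. No serious obstacle is expected; the argument is a direct bookkeeping of the hypotheses built into our notion of a uniform sequence of supersolvable geometric lattices.
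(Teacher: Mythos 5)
Your proof is correct and is essentially the paper's argument: the paper writes the chain explicitly as $x_i = \iota_{n-1}\circ\cdots\circ\iota_{i+1}(t_i)$ and invokes ``repeated application'' of \cref{prop:mod_trans}, which is exactly your induction unwound. The only cosmetic difference is that you transport an arbitrary modular chain of $L_{n-1}$ through $\iota_{n-1}$, whereas the paper fixes the canonical chain built from the distinguished coatoms $t_m$ (which it then carries along for later use); both are valid.
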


\begin{proof}
Let $x_i\coloneqq  \iota_{n-1} \circ \cdots \circ \iota_{i+1} (t_{i}) \in L_n$ for $i=0,1,\ldots,n-1$, and $x_n \coloneqq \hat{1}_{L_n}$, where $t_{m} \in L_{m+1}$ is the distinguished modular coatom for which the image of $\iota_m$ is~$[\hat{0}_{L_{m+1}},t_m]$. Since the $t_{m}$ are modular, and the embeddings $\iota_m$ are rank-preserving, repeated application of \cref{prop:mod_trans} shows that $x_0 \lessdot x_1 \lessdot \cdots \lessdot x_n \in L_n$ is indeed a maximal chain of modular elements.
\end{proof}

Henceforth, each supersolvable geometric lattice $L_n$ comes together with the maximal chain of modular elements $x_0 \lessdot x_1 \lessdot \cdots \lessdot x_n$ from the proof of \cref{prop:super_seq}. This lets us, for instance, speak of $\nu\colon L_n \to \mathbb{N}$ and $L^{(k)}_n$, as defined in \cref{subsec:super}. Notice that in addition to preserving rank, the embeddings $\iota_n$ preserve the $\nu$ statistic, i.e., $\nu(\iota_n(x)) = \nu(x)$ for all $x \in L_n$. Therefore, the embeddings $\iota_n$ restrict to embeddings $\iota_n\colon L^{(k)}_n \to L^{(k)}_{n+1}$ for any $k \geq 1$.

\subsection{Limits of uniform sequences of supersolvable geometric lattices} \label{subsec:limit}

Continue to fix a uniform sequence of supersolvable geometric lattices $L_0, L_1, \ldots$. We will now take the limit of this sequence, which we will denote by $\mathcal{L}_{\infty}$. Since we have distinguished embeddings $\iota_n \colon L_n \to L_{n+1}$, it makes sense to define $\mathcal{L}_{\infty} \coloneqq \bigcup_{n=1}^{\infty} L_n$. 

More precisely, we define $\mathcal{L}_{\infty} \coloneqq \displaystyle \lim_{\longrightarrow} L_n$, the direct limit of the directed system formed by the $L_n$ together with the $\iota_n$. That is, we let $\mathcal{L}_{\infty} \coloneqq \bigsqcup_{n=1}^{\infty} L_n / \sim$, the disjoint union of all the finite lattices~$L_n$ modulo the equivalence relation $\sim$ generated by $x \sim \iota_n(x)$ for all $x\in L_n$ and all $n=0,1,\ldots$. Denoting the equivalence class of an element $x \in \bigsqcup_{n=1}^{\infty} L_n$ by $[x]$, the partial order on $\mathcal{L}_{\infty}$ is $[x] \leq [y]$ if $x' \leq y'$ in some~$L_n$ for some $x' \in [x], y' \in [y]$. 

Because the $L_n$ are graded and the embeddings $\iota_n$ are rank-preserving, their limit $\mathcal{L}_{\infty}$ is $\mathbb{N}$-graded. Similarly, since each $L_n$ is a lattice, their limit $\mathcal{L}_{\infty}$ is a lattice. And the uniformity of the sequence $L_n$ can be used to show that every principal order filter in $\mathcal{L}_{\infty}$ is isomorphic to $\mathcal{L}_{\infty}$. But $\mathcal{L}_{\infty}$ is \emph{not} finite type $\mathbb{N}$-graded: for instance, it has infinitely many atoms. To summarize, the limit poset~$\mathcal{L}_{\infty}$ is an upho lattice, except for the fact that it is not finite type $\mathbb{N}$-graded.

To resolve this final wrinkle, we need to make a ``thinner'' poset out of the ``wide'' poset $\mathcal{L}_{\infty}$. This is where the trimming procedure described in \cref{subsec:super} comes into play. So, define $\nu\colon \mathcal{L}_{\infty}\to \mathbb{N}$ by $\nu([x]) \coloneqq \min\{n\colon x' \in L_n \textrm{ for some $x'\in[x]$}\}$ for all $[x]\in \mathcal{L}_{\infty}$. And then for any positive integer $k \geq 1$, define 
\[\mathcal{L}^{(k)}_{\infty} \coloneqq \{ [x] \in \mathcal{L}_{\infty}\colon \nu([x])-\rho([x]) < k\}.\] 
Since $\rho([x]) = \rho(x)$ and $\nu([x]) = \nu(x)$ for any $[x] \in \mathcal{L}_{\infty}$, we equivalently have that $\mathcal{L}^{(k)}_{\infty} = \bigcup_{n=1}^{\infty} L^{(k)}_n$, the direct limit of the trimmed finite lattices $L^{(k)}_n$ with respect to the embeddings $\iota_n\colon L^{(k)}_n \to L^{(k)}_{n+1}$. Yet another way to think of $\mathcal{L}^{(k)}_{\infty}$ is that it consists of all the elements of rank $n-k+1$ in each $L_n$, for~$n \geq k-1$. It is this~$\mathcal{L}^{(k)}_{\infty}$ which is a proper (i.e., finite type $\mathbb{N}$-graded) upho lattice.

\begin{example}
We continue with the example of $L_n=\Pi_{n+1}$. Then $\mathcal{L}_{\infty}$ consists of all partitions of the set~$\{1,2,\ldots\}$ for which all but finitely many blocks are singletons, partially ordered by refinement. This poset $\mathcal{L}_{\infty}$ is $\mathbb{N}$-graded: we have $\rho(\pi)=\sum_{B \in \pi} (\#B - 1)$. But $\mathcal{L}_{\infty}$ is \emph{not} finite type $\mathbb{N}$-graded. Meanwhile, $\mathcal{L}^{(k)}_{\infty}$ can be viewed as the collection of all partitions of a set of the form $[n]=\{1,2,\ldots,n\}$ (for some $n \geq k$) into $k$ blocks. The partitions in $\mathcal{L}^{(k)}_{\infty}$ are not all partitions of the same set, but the partial order can be described in the same way: for $\pi, \pi'\in \mathcal{L}^{(k)}_{\infty}$, we have $\pi \leq \pi'$ if for every~$B \in \pi$ there exists a~$B' \in \pi'$ with $B \subseteq B'$. And now $\mathcal{L}^{(k)}_{\infty}$ \emph{is} finite type $\mathbb{N}$-graded: we have $\rho(\pi) = n-k$ if $\pi$ is a partition of $[n]$, so there are only finitely many elements of each rank. \Cref{fig:partitions} depicts $\mathcal{L}^{(2)}_{\infty}$ for this example.
\end{example}

The following theorem is the main result of this section. (In the introduction, it was stated, less precisely, as \cref{thm:super_intro}.)

\begin{thm} \label{thm:super}
For any $k \geq 1$, $\mathcal{L}^{(k)}_{\infty}$ is an upho lattice whose core is $L_k$.
\end{thm}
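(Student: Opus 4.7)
The plan is to verify in order: (1) $\mathcal{L}^{(k)}_{\infty}$ is a lattice, (2) it is finite type $\mathbb{N}$-graded, (3) every principal order filter is isomorphic to $\mathcal{L}^{(k)}_{\infty}$ itself, and (4) its core is $L_k$. For (1) and (2), the crucial preliminary observation is that because each $\iota_n$ preserves both rank and the statistic $\nu$, it restricts to an embedding $L^{(k)}_n \hookrightarrow L^{(k)}_{n+1}$, and $\mathcal{L}^{(k)}_\infty$ is naturally the direct limit $\varinjlim L^{(k)}_n$. By \cref{lem:super_trim}, each $L^{(k)}_n$ is a lattice, and since joins in $L^{(k)}_n$ coincide with joins in $L_n$ (where $\iota_n$ is a lattice embedding onto the interval $[\hat 0, t_n]$), the maps in the direct system preserve joins. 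Hence $\mathcal{L}^{(k)}_\infty$ is a join-semilattice with a $\hat 0$, and being locally finite (each interval sits inside some $L^{(k)}_n$) it is automatically a lattice: the meet of two elements is the join of the finitely many common lower bounds. The $\mathbb{N}$-grading descends from the rank-preserving direct system, and finite type follows because any rank-$m$ element has $\nu < m+k$ and thus a representative in the finite lattice $L^{(k)}_{m+k-1}$.

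For (3), fix $[y] \in \mathcal{L}^{(k)}_\infty$ with representative $y \in L_n$ of rank $r$. Iterating the uniformity isomorphisms $\theta_s$ along any saturated chain from $\hat 0$ to $y$ yields, for each $m \geq n$, an isomorphism $\phi_m\colon [y, \hat 1_{L_m}] \to L_{m-r}$, and the compatibility axiom $\iota_{n-1} \circ \theta_s = \theta_{\iota_n(s)} \circ \iota_n$ ensures these $\phi_m$ intertwine with the direct system. Passing to the limit produces an isomorphism between the principal order filter $V_{[y]}$ in $\mathcal{L}_\infty$ and $\mathcal{L}_\infty$ itself. To upgrade this to an isomorphism $V_{[y]} \cap \mathcal{L}^{(k)}_\infty \cong \mathcal{L}^{(k)}_\infty$, I will show that $\phi_m$ respects the statistic $\nu$. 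This reduces to verifying that the distinguished modular chain of $L_m$, after being replaced by the saturated chain refining $\{x_i \vee y\}$ (whose elements are modular in $[y, \hat 1_{L_m}]$ by an upward companion to \cref{prop:mod_trans}), corresponds under $\phi_m$ to the distinguished modular chain of $L_{m-r}$.

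For (4), note that an atom $a$ of $L_n$ satisfies $\nu(a) - \rho(a) < k$ if and only if $\nu(a) \leq k$, that is, if and only if $a \in [\hat 0, x_k] \subseteq L_n$, which is precisely the canonical image of $L_k$; so the atoms of $\mathcal{L}^{(k)}_\infty$ correspond bijectively to the atoms of $L_k$. Since $L_k$ is atomic, the join of these atoms is $\hat 1_{L_k}$, and hence the core is the interval $[\hat 0, \hat 1_{L_k}]$ in $\mathcal{L}^{(k)}_\infty$, which is canonically the copy of $L_k$ embedded in the direct limit. The hard part will be the bookkeeping in step (3): carefully tracking how the induced modular chain on upper intervals matches the distinguished modular chain after the iterated uniformity isomorphism $\phi_m$. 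This is where all the hypotheses of the setup (uniformity, supersolvability, and their mutual compatibility via $\iota_n$ and $\theta_s$) must be invoked in tandem, and where essentially all of the real work of the proof lies.
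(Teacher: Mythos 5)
Your steps (1), (2), and (4) are sound and essentially match the paper's. The genuine gap is in step (3), which you yourself flag as containing ``essentially all of the real work'': you do not carry it out, and the strategy you propose for it would not succeed as stated. You want to show that $\phi_m$ respects $\nu$ by proving that the saturated chain refining $\{x_i \vee y\}$ is carried by $\phi_m$ to the distinguished modular chain of $L_{m-r}$. But the defining data of a uniform sequence of supersolvable geometric lattices places no constraint on where $\theta_s$ sends the elements $x_i \vee y$ for $i < \nu(y)$ --- the isomorphisms $\theta_s$ are pinned down only by the compatibility relation $\iota_{n-1}\circ\theta_s = \theta_{\iota_n(s)}\circ\iota_n$ --- so the full chain-matching statement is not forced by the hypotheses, and the modularity of $x_i \vee y$ in $[y,\hat{1}_{L_m}]$ (your ``upward companion'' to \cref{prop:mod_trans}) does nothing to pin down its image under an abstract isomorphism. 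What is both true and sufficient is weaker: iterating the compatibility relation shows that for an atom $s$, $\theta_s$ carries the interval $[s,x_i]$ onto $[\hat{0},x_{i-1}]$ for every $i \geq \nu(s)$ (because $[\hat 0, x_i]$ is by definition the image of $L_i$ under the composite embedding), whence $\nu(\theta_s(x)) = \nu(x)-1$ for all $x \geq s$; since $\rho$ also drops by one, $\nu-\rho$ is preserved and the isomorphism of order filters restricts to the trimmed posets. Only the indices $i \geq \nu(s)$ are relevant, and nothing about supersolvability enters at this point (it is used instead in \cref{lem:super_trim}).

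This is exactly the mechanism the paper exploits, phrased differently: the map $\eta_n$ is \emph{defined} so as to carry $[[s],[t_{n}]]$ onto $[\hat{0},[t_{n-1}]]$, and a rank count then identifies the rank-$m$ slice of $V^{(k)}_{[s]}$ with the rank-$m$ slice of $\mathcal{L}^{(k)}_{\infty}$. A second, smaller point: the paper first reduces the upho property to principal order filters generated by \emph{atoms} (the standard reduction: an isomorphism at an atom carries any element to an element of one lower rank, so one inducts on rank), which collapses your $r$-fold iteration of $\theta$'s to a single step and makes the bookkeeping above a one-line verification. Adopting that reduction and replacing your modular-chain argument with the iterated compatibility relation closes the gap.
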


\begin{proof}
Let us first explain why $\mathcal{L}^{(k)}_{\infty}$ is a finite type $\mathbb{N}$-graded lattice. As mentioned above, $\mathcal{L}^{(k)}_{\infty} = \bigcup_{n=1}^{\infty} L^{(k)}_n$. We know from \cref{lem:super_trim} that the $L^{(k)}_n$ are graded posets, and the embeddings $\iota_n$ are rank-preserving, so the limit $\mathcal{L}^{(k)}_{\infty}$ is $\mathbb{N}$-graded. Similarly, we know from \cref{lem:super_trim} that the $L^{(k)}_n$ are lattices, so the limit $\mathcal{L}^{(k)}_{\infty}$ is a lattice as well. Finally, as mentioned, for each $n \geq k-1$, the elements of rank $n-k +1$ in~$\mathcal{L}^{(k)}_{\infty}$ are precisely $[x]$ for $x \in L_n$ with $\rho(x)=n-k+1$. Hence, there are only finitely many elements of each rank in $\mathcal{L}^{(k)}_{\infty}$, i.e., $\mathcal{L}^{(k)}_{\infty}$ is finite type $\mathbb{N}$-graded.

Next, let us explain why $\mathcal{L}^{(k)}_{\infty}$ is upho. Actually, it is convenient to first work with~$\mathcal{L}_{\infty}$. We will show that every principal order filter in $\mathcal{L}_{\infty}$ is isomorphic to $\mathcal{L}_{\infty}$. It clearly suffices to do this for principal order filters $V_{[s]} \subseteq \mathcal{L}_{\infty}$ corresponding to atoms~$[s]\in \mathcal{L}_{\infty}$. So let $[s]\in \mathcal{L}_{\infty}$ be an atom. Recall the distinguished modular coatoms $t_n \in L_{n+1}$ for which $\iota_n(L_n) = [\hat{0}_{L_{n+1}},t_n]$. Let $n_0 \coloneqq \nu([s])$ and notice that~$n_0$ is also the smallest $n_0$ for which $[s] \leq [t_{n_0}]$. We will define a series of isomorphisms $\eta_n\colon [[s],[t_{n}]] \to [\hat{0},[t_{n-1}]]$, for $n \geq n_0$, with isomorphism $\eta\colon V_{[s]} \to \mathcal{L}_{\infty}$ then obtained as the limit of $\eta_n$.

To define $\eta_n$, assume we have chosen the representative $s$ of the equivalence class~$[s]$ so that $s \in L_{n}$. Then the elements of $[[s],[t_{n}]]$ are $[x]$ for $x \in [s,\hat{1}_{L_{n}}]$. By supposition, we have an isomorphism $\theta_s\colon [s,\hat{1}_{L_{n}}] \to L_{n-1}$. But also, the elements of~$[\hat{0},[t_{n-1}]]$ are $[x]$ for $x \in L_{n-1}$. The isomorphism $\eta_n\colon [[s],[t_{n}]] \to [\hat{0},[t_{n-1}]]$ is thus defined by composing $\theta_s$ with the identifications of $[[s],[t_{n}]]$ and $[s,\hat{1}_{L_n}]$, and of~$L_{n-1}$ and $[\hat{0},[t_{n-1}]]$. Crucially, the compatibility requirement $\iota_{n-1} \circ \theta_s = \theta_{\iota_n(s)} \circ \iota_n$ implies that the restriction of $\eta_{n+1}$ to the domain of $\eta_n$ agrees with $\eta_n$.

So we get a sequence of isomorphisms $\eta_n\colon [[s],[t_{n}]] \to [\hat{0},[t_{n-1}]]$, for $n \geq n_0$, which have the property that the restriction of $\eta_n$ to the domain of $\eta_m$ agrees with~$\eta_m$ for all~$m \leq n$. Moreover, each $[x] \in V_{[s]}$ (respectively, $[x] \in \mathcal{L}_{\infty}$) belongs to~$[[s],[t_{n}]]$ (resp.,~$[\hat{0},[t_{n-1}]]$) for sufficiently large~$n$. It therefore makes sense to define the isomorphism $\eta \colon V_{[s]} \to \mathcal{L}_{\infty}$ by $\eta \coloneqq \bigcup_{n=n_0}^{\infty}\eta_n$, i.e., $\eta([x]) \coloneqq \eta_n([x])$ if $[x]$ belongs to the domain of~$\eta_n$.

Now we return to $\mathcal{L}^{(k)}_{\infty}$. We want to show that every principal order filter in~$\mathcal{L}^{(k)}_{\infty}$ is isomorphic to $\mathcal{L}^{(k)}_{\infty}$, and again it is sufficient to only consider the principal order filters for atoms. So let $[s] \in \mathcal{L}^{(k)}_{\infty}$ be an atom, and consider the corresponding principal order filter, which for clarity we denote by $V^{(k)}_{[s]} \subseteq \mathcal{L}^{(k)}_{\infty}$. We claim that the isomorphism $\eta\colon V_{[s]} \to \mathcal{L}_{\infty}$ above restricts to an isomorphism~$\eta\colon V^{(k)}_{[s]} \to \mathcal{L}^{(k)}_{\infty}$. To see this, observe that, for each $n \geq k-1$, the elements of rank $n-k+1$ in~$V^{(k)}_{[s]}$ are the elements of rank $n-k+1$ in $[[s],[t_{n+1}]]$. By construction, $\eta_{n+1}$ maps these elements to the elements of rank $n-k+1$ in $[\hat{0},[t_{n}]]$, which are precisely the elements of rank $n-k+1$ in $\mathcal{L}^{(k)}_{\infty}$. So indeed, $\eta$ restricts to an isomorphism~$\eta\colon V^{(k)}_{[s]} \to \mathcal{L}^{(k)}_{\infty}$.

Finally, to see why the core of $\mathcal{L}^{(k)}_{\infty}$ is $L_k$, notice that the atoms of $\mathcal{L}^{(k)}_{\infty}$ are $[s]$ for atoms $s \in L_k$. Joins in $\mathcal{L}^{(k)}_{\infty}$ are joins of the representative elements in the~$L_n$, so the join of the atoms of $\mathcal{L}^{(k)}_{\infty}$ is $[t_k]$, and its core is $[\hat{0},[t_k]]$, i.e., a copy of $L_k$.
\end{proof}

\begin{cor} \label{cor:super}
For any $k \geq 1$, we have 
\[ F(\mathcal{L}^{(k)}_{\infty}; x) = \frac{1}{(1-a_1x)(1-a_2x)\cdots(1-a_kx)}.\]
\end{cor}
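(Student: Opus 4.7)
The plan is to chain together three results already established in the excerpt, so there is essentially no new work to do beyond citing them in the right order. First, I would invoke \Cref{thm:super}, which tells us that $\mathcal{L}^{(k)}_{\infty}$ is an upho lattice with core $L_k$. Second, I would apply \Cref{cor:upho_lat_rgf} to conclude that
\[ F(\mathcal{L}^{(k)}_{\infty}; x) = \chi^*(L_k; x)^{-1}. \]

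Third, I would compute $\chi^*(L_k; x)$ using \Cref{cor:sym_poly_lattices}, which in its final sentence states that under our hypotheses (a uniform sequence of supersolvable geometric lattices), $\chi^*(L_n; x) = (1-a_1 x)(1-a_2 x) \cdots (1-a_n x)$ for every $n$. Specializing to $n=k$ and substituting into the previous display immediately yields the desired formula.

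There really is no main obstacle here; the work has all been done in the preceding sections. The only thing to double-check is that the definition of the $a_i$ in \Cref{cor:sym_poly_lattices}, namely $a_n = \#\{\text{atoms } s \in L_n\} - \#\{\text{atoms } s \in L_{n-1}\}$, is consistent with how the $a_i$ appear in the denominator of the generating function, which it is by construction. Thus the corollary is a one-line consequence of \Cref{thm:super}, \Cref{cor:upho_lat_rgf}, and \Cref{cor:sym_poly_lattices}.
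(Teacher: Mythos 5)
Your proposal is correct and is exactly the paper's proof: the paper also cites \cref{cor:sym_poly_lattices} for the factorization of $\chi^*(L_k;x)$ and then combines \cref{thm:super} with \cref{cor:upho_lat_rgf}. Nothing is missing.
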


\begin{proof}
We know that $\chi^*(L_k;x)=(1-a_1x)(1-a_2x)\cdots(1-a_kx)$ (see~\cref{cor:sym_poly_lattices}). The corollary then follows from \cref{thm:super,cor:upho_lat_rgf}.
\end{proof}

\begin{remark}
I thank David Speyer and Gjergji Zaimi for explaining the following to me in answers to a question I posted to MathOverflow~\cite{hopkins2022MO}. Suppose that~$A=[a_{ij}]_{0 \leq i,j \leq \infty}$ and $B=[b_{ij}]_{0 \leq i,j \leq \infty}$ are two infinite, lower unitriangular matrices satisfying:
\begin{itemize}
\item $A$ and $B$ are inverses;
\item $\displaystyle \sum_{i=k}^{\infty} a_{i,k} \; x^{i-k} = \left(\sum_{i=0}^{k+1} b_{k+1,k+1-i} \; x^i \right)^{-1}$ for all $k \geq 0$.
\end{itemize}
Let $a_k \coloneqq a_{k,k-1} - a_{k-1,k-2}$ for all $k \geq 1$ (where by convention $a_{0,-1} \coloneqq 0$). Then the entries of $A$ and $B$ are determined by this sequence $a_1,a_2,\ldots$. Specifically, we must have that 
\begin{align*}
a_{i,j} &= h_{i-j}(a_1,\ldots,a_{j+1}); \\
b_{i,j} &= (-1)^{i-j}e_{i-j}(a_1,\ldots,a_i).
\end{align*}
Because of this, we could alternatively have deduced \cref{cor:super} by combining \cref{thm:super,cor:upho_lat_rgf} with Dowling's \cref{thm:uniform_dowling}, without any appeal to Stanley's \cref{thm:super_stanley}.
\end{remark}

\subsection{Examples of uniform sequences of supersolvable geometric lattices}

The construction presented above in this section would not be interesting if there were not any interesting examples of uniform sequences of supersolvable geometric lattices. Fortunately, there are many interesting examples. We now review all examples that we know of.

\subsubsection{Boolean lattices} \label{subsec:bool}

Let $L_n = B_n$ be the sequence of Boolean lattices. 

We define the auxiliary data~$\iota_n\colon B_n\to B_{n+1}$ and $\theta_{s}\colon [s, \hat{1}_{B_n}]\to B_{n-1}$ as follows. First, the embedding $\iota_n$ comes from the inclusion of sets $[n]\subseteq [n+1]$. That is, we set~$\iota_n(S)\coloneqq S$ for all~$S\in B_n$. Next, noting that any atom in $B_n$ has the form~$\{i\}$ for $1 \leq i \leq n$, we set~$\theta_{\{i\}}(S) \coloneqq \{f(j) \colon j \in S \setminus \{i\}\}$ for all $S \in [\{i\}, \hat{1}_{B_n}]$, where $f(j)$ is $j$ if~$j < i$ and~$j-1$ if~$j > i$. In other words, to obtain $\theta_{\{i\}}(S)$ from $S$, we delete $i$ and then re-index by subtracting one from all numbers greater than $i$. 

It is straightforward to verify that this gives a uniform sequence of supersolvable geometric lattices. Here $a_n = 1$ for all $n$, and $V(i,j)=\binom{i}{j}$ and~$v(i,j)=(-1)^{i-j}\binom{i}{j}$. 

Let us use the notation $\mathcal{B}_{\infty} \coloneqq \mathcal{L}_{\infty}$ and $\mathcal{B}^{(k)}_{\infty}\coloneqq \mathcal{L}^{(k)}_{\infty}$ for this sequence $L_n = B_n$. Then, $\mathcal{B}_{\infty}$ is the poset of finite subsets of~$\{1,2,\ldots\}$, ordered by inclusion. And for any $k \geq 1$, $\mathcal{B}^{(k)}_{\infty} = \{\textrm{finite } S\subseteq \{1,2,\ldots\}\colon \max(S) < \#S + k\}$, ordered by inclusion. Equivalently, $\mathcal{B}^{(k)}_{\infty} = \{\textrm{finite } S\subseteq \{1,2,\ldots\}\colon S \subseteq [\#S + k - 1]\}$. Since the core of this upho lattice $\mathcal{B}^{(k)}_{\infty}$ is $B_k$, we have $F(\mathcal{B}^{(k)}_{\infty};x) = 1/(1-x)^{k}$.

Although the core of $\mathcal{B}^{(k)}_{\infty}$ is $B_k$, we note that $\mathcal{B}^{(k)}_{\infty}$ is \emph{not} isomorphic to $\mathbb{N}^k$ (for any~$k \geq 2$). Indeed, the element $\{1,2,\ldots,n\}$ in $\mathcal{B}^{(k)}_{\infty}$ covers $n$ elements. But in $\mathbb{N}^{k}$, each element covers at most $k$ elements. This gives the simplest example showing that an upho lattice is not determined by its core.

\subsubsection{Subspace lattices}

Now fix a prime power $q$, and let $L_n =  B_n(q)$ be the sequence of subspace lattices over $\mathbb{F}_q$. In order to define the various maps between the~$B_n(q)$, it will be necessary to concretely represent elements of the vector spaces~$\mathbb{F}_q^n$. Thus, let each $\mathbb{F}_q^n$ come with a distinguished basis $\{e_1,\ldots,e_n\}$. This gives a canonical inclusion $\mathbb{F}_q^{n} \subseteq \mathbb{F}_q^{n+1}$. It also means that each subspace $U \in  B_n(q)$ can be represented uniquely by the matrix in reduced column echelon form whose column space is $U$. Similarly, elements $g \in \mathrm{GL}(\mathbb{F}_q^n)$ of the general linear group of~$\mathbb{F}_q^n$ can also now be represented by matrices. These matrix representations are useful because we can order matrices lexically, by reading them row-by-row.

We define the auxiliary data~$\iota_n\colon B_n(q)\to B_{n+1}(q)$ and $\theta_{s}\colon [s, \hat{1}_{B_n(q)}]\to B_{n-1}(q)$ as follows. First, $\iota_n$ comes from the inclusion of vector spaces $\mathbb{F}_q^{n} \subseteq \mathbb{F}_q^{n+1}$. That is, to get the representing matrix of $\iota_n(U)$ from that of $U \in B_n(q)$, we append a row of zeros. Next, noting that any atom ($1$-dimensional subspace) $S \in B_n(q)$ has several complementary $(n-1)$-dimensional subspaces, we choose the complement~$T \in B_n(q)$ of $S$ whose representing matrix is first in lexical order. Similarly, there are several $g \in \mathrm{GL}(\mathbb{F}_q^n)$ with $g \cdot T = \mathrm{Span}\{e_1,\ldots,e_{n-1}\}$, so we choose the $g$ whose representing matrix is first in lexical order. Then, set $\theta_S(U) \coloneqq \iota_{n-1}^{-1} (g(T\cap U))$ for $U \in [S, \hat{1}_{B_n(q)}]$. 

It is again a straightforward check that we get a uniform sequence of supersolvable geometric lattices. Here $a_n = q^n$, $V(i,j) = \qbinom{i}{j}_q$ and $v(i,j) = (-1)^{i-j}q^{\binom{i-j}{2}}\qbinom{i}{j}_q$.

We use notation $\mathcal{B}_{\infty}(q) \coloneqq \mathcal{L}_{\infty}$ and $\mathcal{B}^{(k)}_{\infty}(q) \coloneqq \mathcal{L}^{(k)}_{\infty}$ for this sequence $L_n = B_n(q)$. Let $\mathbb{F}_q^{\infty}$ denote the (infinite-dimensional) $\mathbb{F}_q$-vector space with basis $\{e_1,e_2,\ldots\}$. Then, $\mathcal{B}_{\infty}(q)$ is the poset of finite-dimensional subspaces of~$\mathbb{F}_q^{\infty}$, ordered by inclusion. And $\mathcal{B}^{(k)}_{\infty}(q) = \{\textrm{finite-dimensional } U\subseteq \mathbb{F}_q^{\infty}\colon U \subseteq \mathrm{Span}(\{e_1,e_2,\ldots,e_{\dim(U) + k -1}\})\}$  for any $k \geq 1$. We have $F(\mathcal{B}^{(k)}_{\infty}(q);x) = 1/((1-x)(1-qx) \cdots (1-q^{n-1}x))$, since the core of $\mathcal{B}^{(k)}_{\infty}(q)$ is $B_k(q)$. Evidently, $\mathcal{B}^{(k)}_{\infty}(q)$ is a $q$-analogue of $\mathcal{B}^{(k)}_{\infty}$.

\subsubsection{Partition lattices}

The partition lattices were our running example above, but for completeness we repeat everything here. It is well-known that the partition lattices $\Pi_{n}$ are supersolvable geometric lattices, and taking $L_n = \Pi_{n+1}$ gives a uniform sequence. The appropriate auxiliary data $\iota_n$ and $\theta_s$ were defined in \cref{ex:partition_maps}. As a reminder, $\iota_n \colon \Pi_{n+1} \to \Pi_{n+2}$ is given by $\iota_n(\pi) = \pi \cup \{ \{n+2\} \}$. Here~$a_n = n$, and $V(i,j)=S(i+1,j+1)$ and $v(i,j)=s(i+1,j+1)$, the Stirling numbers of the second and first kind.

Let us use the notation $\prod_{\infty} \coloneqq \mathcal{L}_{\infty}$ and $\prod^{(k)}_{\infty} \coloneqq \mathcal{L}^{(k)}_{\infty}$ for this sequence $L_n = \Pi_{n+1}$. Then, $\prod_{\infty}$ is the poset of all partitions of the set~$\{1,2,\ldots\}$ for which all but finitely many blocks are singletons, ordered by refinement. And for any $k \geq 1$, $\prod^{(k)}_{\infty}$ can be identified with the collection of partitions of $[n]$ into $k$ blocks, for some $n \geq k$. The partial order on $\prod^{(k)}_{\infty}$ is still refinement in the sense that for $\pi, \pi' \in \prod^{(k)}_{\infty}$ we have $\pi \leq \pi'$ if for every $B\in \pi$ there exists $B' \in \pi'$ with $B\subseteq B'$. We have $F(\prod^{(k)}_{\infty};x) = 1/((1-x)(1-2x) \cdots (1-kx))$, since the core of $\prod^{(k)}_{\infty}$ is $\Pi_{k+1}$. As mentioned above, \cref{fig:partitions} depicts $\prod^{(2)}_{\infty}$.

\subsubsection{Type~B partition lattices}

We now describe a Type~B variant of the previous example. For an integer $i \in \mathbb{Z}$, let us use the shorthand $\overline{i} \coloneqq -i$, and for a subset of integers $S\subseteq \mathbb{Z}$, let $\overline{S} \coloneqq \{\overline{i}\colon i \in S\}$. For $n \geq 1$, a partition $\pi$ of the set $[n] \cup \overline{[n]}$ is called a Type~B partition if:
\begin{itemize}
\item for every block $B \in \pi$, we also have $\overline{B} \in \pi$;
\item there is at most one block $B \in \pi$ (called the zero block) with $B = \overline{B}$.
\end{itemize}
The \dfn{Type~B partition lattice} $\Pi^B_{n}$ is the poset of Type~B partitions of $[n] \cup \overline{[n]}$, ordered by refinement. 

It is well-known that $\Pi^B_{n}$ is a geometric lattice. In fact, just as $\Pi_{n+1}$ is the lattice of flats of the Coxeter arrangement of Type~$A_n$, $\Pi^B_{n}$ is the lattice of flats of the Coxeter arrangement of Type~$B_n$ (see~\cite{zaslavsky1981geometry,reiner1997noncrossing,stanley2007hyperplane}). Moreover, it is known that $\Pi^B_{n}$ is supersolvable, and taking $L_n = \Pi^B_n$ gives a uniform sequence. (This also follows from a more general result of Dowling~\cite{dowling1973class} discussed below.) Note that the rank function on $\Pi^B_{n}$ is given by $\rho(\pi) = n-k$ if $\pi$ has $2k$ non-zero blocks.

We define $\iota_n\colon \Pi^B_{n} \to \Pi^B_{n+1}$ and~$\theta_s\colon [s,\hat{1}_{\Pi^B_{n}}]\to \Pi^B_{n-1}$ for this sequence as follows. First, we set $\iota_n(\pi) \coloneqq \pi \cup \{ \{n+1\}, \{\overline{n+1}\}\}$ for all~$\pi \in \Pi^B_{n}$. Next, consider an atom~$s \in \Pi^B_{n}$. The atom $s$ could have a single non-singleton block of the form~$\{i,\overline{i}\}$ for~$1 \leq i \leq n$; denote this kind of atom by~$s_i$. Or, the atom $s$ could have two non-singleton blocks of the form $\{i,j\}, \{\overline{i},\overline{j}\}$ (respectively, $\{i,\overline{j}\}, \{\overline{i},j\}$) for~$1 \leq i < j \leq n$; denote this kind of atom by $s_{i,j}$ (resp., $s_{i,\overline{j}}$). In order to not get bogged down by notation, let us describe how to obtain $\theta_s(\pi)$ from $\pi$ in words only. In the case where $s=s_i$, we obtain $\theta_s(\pi)$ from $\pi$ by deleting $i$ and $\overline{i}$ from $\pi$, and re-indexing by decreasing by one the absolute value of numbers greater in absolute value than~$i$.  In the case where $s=s_{i,j}$ or $s_{i,\overline{j}}$, we obtain $\theta_s(\pi)$ from~$\pi$ by deleting $j$ and $\overline{j}$ from~$\pi$, and re-indexing by decreasing by one the absolute value of numbers greater in absolute value than $j$. These are slight variations of the maps we used for the Type~A partition lattices, and it is again a straightforward, albeit tedious, check that they satisfy the requirements.

For this sequence, $a_n = 2n-1$, and $V(i,j) = S_B(i,j)$ and $v(i,j)=s_B(i,j)$, where $S_B(n,k)$ and $s_B(n,k)$ are the Type~B Stirling numbers of the second and first kind. These Type~B Stirling numbers are defined by
\[S_B(n,k) \coloneqq h_{n-k}(1,3,\ldots,2k+1); \qquad s_B(n,k) \coloneqq (-1)^{n-k}e_{n-k}(1,3,\ldots,2n-1).\]
(See the recent paper~\cite{sagan2022stirling} for more about the Type~B Stirling numbers.) In particular, we have that $\chi^*(\Pi^{B}_n;x) = (1-x)(1-3x)\cdots(1-(2n-1)x)$.

We use the notation $\prod^{B}_{\infty} \coloneqq \mathcal{L}_{\infty}$ and $\prod^{B,(k)}_{\infty} \coloneqq \mathcal{L}^{(k)}_{\infty}$ for this sequence $L_n = \Pi^{B}_{n}$. Let us call a partition of the set~$\mathbb{Z} \setminus \{0\}$ a Type~B partition if it satisfies the same two conditions in the bulleted list above. Then $\prod^{B}_{\infty}$ is the poset of all Type~B partitions of $\mathbb{Z} \setminus \{0\}$ where all but finitely many blocks are singletons, ordered by refinement. And for any $k \geq 1$, we can view $\prod^{B,(k)}_{\infty}$ as the collection of Type~B partitions of a set of the form $[n]\cup\overline{[n]}$ (for $n \geq k - 1$) which have $2(k-1)$ non-zero blocks. The partial order on $\prod^{B,(k)}_{\infty}$ is still refinement in the sense that for $\pi, \pi' \in \prod^{B,(k)}_{\infty}$ we have $\pi \leq \pi'$ if for every $B\in \pi$ there exists $B' \in \pi'$ with $B\subseteq B'$. We have $F(\prod^{B,(k)}_{\infty};x) = 1/((1-x)(1-3x) \cdots (1-(2k-1)x))$, since the core of $\prod^{B,(k)}_{\infty}$ is $\Pi^{B}_{k}$.

\subsubsection{Dowling lattices} \label{subsec:dowling}

The most sophisticated example of a uniform sequence of supersolvable geometric lattices is due to Dowling~\cite{dowling1973qanalog,dowling1973class}. The \dfn{Dowling lattice}~$Q_n(G)$ depends on the choice of a finite group $G$. Choosing $G$ to be the trivial group gives~$Q_n(G) = \Pi_{n+1}$, and choosing $G=\mathbb{Z}/2\mathbb{Z}$ gives~$Q_n(G) =\Pi^B_{n}$. In this way, the Dowling lattices recover the previous two examples as special cases. Dowling first defined $Q_n(G)$ in~\cite{dowling1973qanalog} for $G$ the multiplicative group of a finite field, and then in~\cite{dowling1973class} for any finite group $G$. See also~\cite[\S5.3]{doubilet1972foundations} for a graphical description of $Q_n(G)$.

We now review the construction of Dowling lattices. Thus, fix a finite group $G$, say with $m$ elements. The construction of $Q_n(G)$ requires several technical definitions, so please bear with us.

A partial partition of a set~$X$ is a collection~$\pi =\{B_1,\ldots,B_k\}$ of nonempty subsets of $X$ ($\emptyset \subsetneq B_1,\ldots,B_k \subseteq X$) that are pairwise disjoint ($B_i \cap B_j = \emptyset$ for $i \neq j$). In other words, a partial partition of $X$ is a partition of a subset of $X$. Note that we allow $k=0$, i.e., the partition with no blocks. There is a canonical bijection between the partial partitions of $[n]$ and the (usual) partitions of $[n+1]$ which takes the partial partition $\pi = \{B_1,\ldots,B_k\}$ to $\pi' \coloneqq \pi \cup \{ [n+1]\setminus \cup_{i=1}^{k} B_i\}$.

A $G$-labeled set is a map $\alpha\colon A \to G$ from a set $A$ to~$G$. We also denote such a $G$-labeled set by the pair $(\alpha,A)$. We say that two $G$-labeled sets~$(\alpha,A)$ and~$(\beta,B)$ are equivalent if $A=B$ and there is $g \in G$ such that $\alpha(x) = g \cdot \beta(x)$ for all $x \in A$. We denote the equivalence class of $(\alpha,A)$ by $[\alpha,A]$. 

A partial $G$-partition of a set~$X$ is a collection $\alpha = \{[\alpha_1,A_1], [\alpha_2,A_2],\ldots,[\alpha_k,A_k]\}$ of equivalence classes of $G$-labeled sets for which the underlying sets $\{A_1,\ldots,A_k\}$ form a partial partition of $X$. We continue to refer to the $A_i$ as the blocks of $\alpha$.

We can now define $Q_n(G)$. The elements of $Q_n(G)$ are all partial $G$-partitions of~$[n]$. And the partial order is: for partial $G$-partitions $\alpha=\{[\alpha_1,A_1],\ldots,[\alpha_k,A_k]\}$ and $\beta=\{[\beta_1,B_1],\ldots,[\beta_{\ell},B_{\ell}]\}$, we have $\alpha \leq \beta$ if
\begin{itemize}
\item each block $B_j$ in $\beta$ is a union $B_j = A_{i_1} \cup \cdots \cup A_{i_r}$ of blocks $A_{i_1},\ldots,A_{i_r}$ in~$\alpha$;
\item for any block $A_i$ in $\alpha$ with $A_i \subseteq B_j$ for some block $B_j$ in $\beta$, we have that the restriction $\beta_{j} \mid_{A_i}\colon A_i \to G$ of the $G$-labeled set $\beta_j\colon B_j \to G$ to $A_i$ is equivalent to the $G$-labeled set $\alpha_i \colon A_i \to G$.
\end{itemize}
For instance, the maximum element of $Q_n(G)$ is the partial $G$-partition $\alpha=\emptyset$ with no blocks. And the minimum element of $Q_n(G)$ is $\alpha= \{ [\ast,\{1\}], [\ast,\{2\}], \ldots, [\ast,\{n\}]\}$, where $\ast$ denotes the map to $G$ which is constantly equal to the identity $e \in G$.

Dowling~\cite{dowling1973class} proved that $Q_n(G)$ is a supersolvable geometric lattice of rank $n$, and that taking $L_n=Q_n(G)$ gives a uniform sequence (see also~\cite[Exercise 3.131]{stanley2012ec1}). Note that the rank function on $Q_n(G)$ is given by $\rho(\alpha) = n-k$ if $\alpha$ has $k$ blocks.

We define $\iota_n\colon Q_{n}(G) \to Q_{n+1}(G)$ and~$\theta_s\colon [s,\hat{1}_{Q_{n}(G)}]\to Q_{n-1}(G)$ for this sequence as follows. First, we set $\iota_n(\alpha) \coloneqq \alpha \cup \{[\ast,\{n+1\}]\}$ for all~$\alpha \in Q_n(G)$. Next, consider an atom $s \in Q_n(G)$. This atom $s$ could have all singleton blocks, being of the form $s=\{ [\ast,\{1\}], \ldots, [\ast,\{n\}]\} \setminus \{[\ast,i]\}$ for some~$1 \leq i \leq n$; denote such an atom by~$s_i$. Or, the~$s$ could have a single non-singleton block of the form $\{i,j\}$ for some~$1 \leq i < j \leq n$; denote such an atom by $s_{i,j}$.\footnote{In this notation, we suppress the choice of the $G$-labelling, but the $G$-labelling is irrelevant.} We describe how to obtain~$\theta_s(\alpha)$ from~$\alpha$ in words. In the case where $s=s_i$, to obtain $\theta_s(\alpha)$ from~$\alpha$ we re-index by subtracting one from all numbers greater than $i$. In the case where~$s=s_{i,j}$, to obtain $\theta_s(\alpha)$ from~$\alpha$ we delete~$j$ and re-index by subtracting one from all numbers greater than $j$. It is again a straightforward, albeit tedious, check that these satisfy the requirements.

For this sequence we have $a_n = 1 + (n-1)m$, and hence
\begin{align*}
V(i,j) &= h_{i-j}(1,1+m,1+2m,\ldots,1+jm);\\
v(i,j)  &= (-1)^{i-j}e_{i-j}(1,1+m,1+2m,\ldots,1+(i-1)m).
\end{align*}
In particular, $\chi^*(Q_n(G);x) = (1-x)(1-(1+m)x) \cdots ( 1-(1+(n-1)m)x)$.

Let us use $\mathcal{Q}_{\infty}(G) \coloneqq \mathcal{L}_{\infty}$ and $\mathcal{Q}^{(k)}_{\infty}(G) \coloneqq \mathcal{L}^{(k)}_{\infty}$ for this sequence $L_n = Q_n(G)$. Then, $\mathcal{Q}_{\infty}(G)$ is the poset of partial $G$-partitions $\alpha=\{[\alpha_1,A_1],[\alpha_2,A_2],\ldots\}$ of the set~$\{1,2,\ldots\}$ for which:
\begin{itemize}
\item all but finitely many blocks $A_1,A_2,\ldots$ are singletons;
\item the union $A_1\cup A_2 \cup \cdots$ is cofinite, i.e., $\{1,2,\ldots\} \setminus \cup_{i=1}^{\infty} A_i$ is finite.
\end{itemize}
The partial order $\alpha \leq \beta$ for such partial $G$-partitions is exactly as described above. 

Unfortunately, the best description of $\mathcal{Q}^{(k)}_{\infty}(G)$ we have is slightly ugly. For~$k \geq 1$, we can represent the elements of $\mathcal{Q}^{(k)}_{\infty}(G)$ by pairs $(\alpha,n)$ where $n \in \mathbb{N}$ is a nonnegative integer and $\alpha$ is a partial $G$-partition of $[n]$ into $k-1$ blocks. The partial order is~$(\alpha,i) \leq (\beta,j)$ if $i \leq j$ and $\alpha \cup \{[\ast,\{i+1\}],\ldots,[\ast,\{j\}]\} \leq \beta$ according to the partial order on partial $G$-partitions described above. Alternatively, we can say that $(\alpha,i) \leq (\beta,j)$ if $i \leq j$ and $\alpha \leq \beta'$, where $\beta'$ is the result of deleting $i+1,\ldots,j$ from $\beta$. We have~$F(\mathcal{Q}^{(k)}_{\infty}(G);x) = 1/((1-x)(1-(1+m)x) \cdots (1-(1+(k-1)m)x))$, since the core of $\mathcal{Q}^{(k)}_{\infty}(G)$ is~$Q_k(G)$.

\section{Upho lattices from monoids} \label{sec:monoid}

In this section we explore an algebraic source of upho lattices: monoids. Already Gao--Guo--Seetharaman--Seidel~\cite[\S5]{gao2020upho} observed that cancellative monoids yield upho posets. (In fact, the connection between enumeration in monoids, especially cancellative monoids, and M\"{o}bius function values appeared much earlier in the work of Cartier and Foata~\cite{cartier1969problemes} on free partially commutative monoids.) But to get an upho \emph{lattice} in this way requires a very special kind of monoid. The most significant source of monoids with the lattice property are the Garside monoids~\cite{dehornoy1999gaussian, dehornoy2015foundations}, whose theory and main examples we review below.

\subsection{Monoid basics}

Here we quickly review the monoid terminology and notation we will need in what follows. We mostly follow the terminology in~\cite{dehornoy1999gaussian}. Recall that a \dfn{monoid} $M=(M,\cdot)$ is a set $M$ equipped with an associative binary product~$\cdot$ that has an identity element $1 \in M$. We often suppress the product symbol $\cdot$ when it is clear from context.

For any set $S$, the \dfn{free monoid} on $S$ is the set of (finite length) words over the alphabet $S$, with the product being the concatenation of words. The identity of the free monoid on $S$ is the empty word. 

A \dfn{presentation} of a monoid $M$ is a way of writing the monoid $M=\langle S \mid R \rangle$ as the quotient of the free monoid on some set $S$ by the relations (of the form $w=w'$, where $w$ and $w'$ are words in the free monoid on $S$) in some set~$R$. More precisely, the elements of $\langle S \mid R \rangle$ are equivalence classes of words in the free monoid on~$S$ under the equivalence relation generated by $x w y \sim x w' y$ for all relations~$w = w'$ in~$R$ and all words~$x,y$. $M$ is \dfn{finitely generated} if it has a presentation~$M=\langle S \mid R \rangle$ with $S$ finite. This is not standard terminology, but let us say that $M$ is \dfn{homogeneously finitely generated} if it has a presentation $M=\langle S \mid R \rangle$ with $S$ finite and with all the relations in $R$ homogeneous. Here we say that a relation $w = w'$ is homogeneous if $\ell(w) = \ell(w')$, where $\ell(w)$ is the length of the word $w$. (Equivalently, the finitely generated monoid $M=\langle S \mid R \rangle$ is homogeneously finitely generated if there is a monoid homomorphism $\varphi\colon M \to \mathbb{N}$ with $\varphi(s)=1$ for each generator~$s\in S$.) For a homogeneously finitely generated monoid $M$, the \dfn{length} $\ell(x)$ of any element $x\in M$ is well-defined as the length of any expression for $x$ as a word in the generators.

Now let $M$ be a monoid. A non-identity element $a \in M \setminus \{1\}$ is an \dfn{atom} if it cannot be written as a nontrivial product, i.e., $a=bc$ implies $b=1$ or~$c=1$. The monoid~$M$ is \dfn{atomic} if it is generated by its atoms. It is \dfn{bounded atomic} if it is atomic and for each $x\in M$ there is a finite upper bound for the length of an expression for $x$ as a product of atoms. Note that in an atomic monoid, the sets which generate the monoid are precisely the sets containing the atoms; hence, if the atomic monoid is finitely generated, then it has finitely many atoms. Also note that a homogeneously finitely generated monoid is bounded atomic.

For two elements $a,b\in M$, we say that $a$ is a \dfn{left divisor} of $b$, and $b$ is a \dfn{right multiple} of $a$, if $a x=b$ for some $x \in M$. We use $\leq_L$ for the preorder of \dfn{left divisibility} on $M$: $a \leq_L b$ means that $a$ is a left divisor of $b$. Of course, we also have the dual notions of \dfn{right divisor} and \dfn{left multiple}, and the \dfn{right divisibility} preorder~$\leq_R$. We will mostly be interested in the left order $\leq_L$.

In the situation we are interested in, the divisibility preorders will actually be partial orders. Specifically, if $M$ is bounded atomic then $\leq_L$ and $\leq_R$ are partial orders. Moreover, in this situation, the atoms of $(M,\leq_L)$, and of $(M,\leq_R)$, in the poset-theoretic sense are the same as the atoms of $M$ in the monoid-theoretic sense. (But the atomic property of a monoid is unrelated to the atomic property of a lattice; we apologize for this conflict in terminology, which unfortunately is standard.)

In fact, if $M$ is finitely generated, then it is bounded atomic if and only if $\leq_L$ is a partial order and each $y \in M$ has only finitely many $x\in M$ with $x \leq_L y$: see \cite[Proposition~2.3]{dehornoy1999gaussian}. We can similarly see that $M$ is homogeneously finitely generated if and only if $(M,\leq_L)$ is a finite type $\mathbb{N}$-graded poset. In this case, the rank of an element $x\in M$ is the same as its length $\ell(x)$.

We say that $M$ is \dfn{left cancellative} if for all $a,b,c\in M$, whenever $ab = ac$ then we have $b=c$. Of course, there is also the dual notion of \dfn{right cancellative}. The monoid is \dfn{cancellative} if it is both left cancellative and right cancellative. We will mostly be interested in left cancellative monoids.

\subsection{Upho posets from cancellative monoids}

We get upho lattices from monoids in the following way.

\begin{lemma}[{c.f.~\cite[Lemma~5.1]{gao2020upho} and~\cite{fu2024upho}}] \label{lem:monoid}
Let $M$ be a homogeneously finitely generated monoid. If $M$ is left cancellative, then $\mathcal{L} \coloneqq (M,\leq_L)$ is an upho poset. If additionally every pair of elements in $M$ have a least (with respect to $\leq_L$) common right multiple, then $\mathcal{L}$ is an upho lattice.\end{lemma}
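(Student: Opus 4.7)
The plan is to proceed in three parts: first confirm that $(M,\leq_L)$ is a finite type $\mathbb{N}$-graded poset; second establish the upho property using left cancellativity; and third, under the additional hypothesis, exhibit all meets.

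Step one is already handled by the preliminary discussion: because $M$ is homogeneously finitely generated (hence bounded atomic), $\leq_L$ is a partial order with minimum $1 \in M$ and rank function equal to word length, and since there are only finitely many words of each length, this gives a finite type $\mathbb{N}$-graded poset. For the upho property, the plan is to show that for every $p \in M$ the map $\phi_p \colon V_p \to M$ sending $q$ to the unique $x$ with $q = px$ is an order-isomorphism. Left cancellativity is exactly what makes $x$ well-defined and $\phi_p$ injective; surjectivity is immediate with inverse $x \mapsto px$. Order is preserved in both directions: if $q_i = p x_i$ for $i=1,2$, then $q_2 = q_1 y$ forces $p x_2 = p x_1 y$ and hence $x_2 = x_1 y$ by left cancellativity, while the reverse implication $x_2 = x_1 y \Rightarrow q_2 = q_1 y$ is immediate. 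Thus $V_p \cong \mathcal{L}$ and $\mathcal{L}$ is upho.

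For the lattice step, joins are supplied directly by hypothesis: $a \vee b$ is the least common right multiple of $a$ and $b$. For meets, given $a, b \in M$, let
\[ S \coloneqq \{c \in M \colon c \leq_L a \text{ and } c \leq_L b\}, \]
which is nonempty (it contains $1$) and finite (it sits inside the finite principal order ideal below $a$). The key observation I would prove is that $S$ is closed under joins in $\mathcal{L}$: if $c_1, c_2 \in S$, then $a$ and $b$ are both common right multiples of $c_1$ and $c_2$, so the \emph{least} common right multiple $c_1 \vee c_2$ left-divides both $a$ and $b$, whence $c_1 \vee c_2 \in S$. A finite nonempty subset of a poset closed under pairwise joins has a unique maximal element, and this element must be $a \wedge b$.

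I expect this final step to be the main subtlety: the join hypothesis does not a priori supply meets, and it is the combination of finiteness (coming from the $\mathbb{N}$-grading) together with closure of $S$ under joins that lets us upgrade an arbitrary maximal element of $S$ to a genuine maximum. The first two steps are essentially formal, though step two makes clear that left cancellativity is precisely what allows the principal filters to be identified with $M$ via left multiplication.
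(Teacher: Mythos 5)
Your proof is correct and follows essentially the same route as the paper: the first two steps (finite type $\mathbb{N}$-gradedness from homogeneous finite generation, and the isomorphism $V_p \to \mathcal{L}$ via left cancellation) are identical, and your final step is just an unpacked, self-contained version of the standard fact the paper cites (a finite join-semilattice with a $\hat{0}$ is a lattice), applied to the set of common lower bounds rather than to the interval $[\hat{0}, a\vee b]$. No gaps.
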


Note that \cref{lem:monoid} was stated as \cref{lem:monoid_intro} in the introduction.

\begin{proof}[Proof of~\cref{lem:monoid}]
As mentioned, if $M$ is homogeneously finitely generated then $\mathcal{L} \coloneqq (M, \leq_L)$ is a finite type $\mathbb{N}$-graded poset. 

So now let us show that if $M$ is left cancellative, then $\mathcal{L}$ is upho. The proof is given in~\cite[Proof of Lemma~5.1]{gao2020upho}, but it is simple so we repeat it here. Let~$x \in M$. We want to construct an isomorphism $\varphi\colon V_x \to \mathcal{L}$ where $V_x \subseteq \mathcal{L}$ is the principal order filter of $\mathcal{L}$ generated by $x$. Every $y \in V_x$ has the form~$y=xz$ for some~$z\in M$; we then set $\varphi(y) \coloneqq z$. This map is well-defined precisely because~$M$ is left cancellative: if $y=xz$ and $y=xw$, then we must have $w=z$. Once we know that~$\varphi$ is well-defined, it is easy to see that it and its inverse (which is multiplication on the left by $x$) respect left divisibility, so that it gives a poset isomorphism.

Finally, let us show that if least common right multiples of pairs of elements in~$M$ exist then $\mathcal{L}$ is a lattice. The idea is the same as we have seen before: the fact that joins exist also implies meets exist. In more detail, let $x,y \in \mathcal{L}$ be any two elements. By assumption, their join $x \vee y$ exists. The interval $[\hat{0},x \vee y]$ is a finite join semilattice with a $\hat{0}$, so, as we have seen, it is a lattice (see, e.g., \cite[Proposition~3.3.1]{stanley2012ec1}). Hence the meet $x \wedge y$ exists, and so $\mathcal{L}$ is a lattice.
\end{proof}

In the remainder of this section we will use \cref{lem:monoid} to construct upho lattices. We note that the upho lattices we obtain in this section via \cref{lem:monoid} are quite different from the ones we saw in the \cref{sec:super}. For instance, whereas all the characteristic polynomials in \cref{sec:super} factored into linear factors over the integers, that will not be the case for the lattices in this section.

\subsubsection{A monoid for rank two cores}

To produce sophisticated examples of monoids satisfying the conditions of \cref{lem:monoid} will require some deeper theory. But there are a few examples we can produce by hand. 

For each $r \geq 1$, there is a unique finite graded lattice of rank two that has exactly $r$ atoms. This lattice is conventionally denoted $M_r$, and we will follow that convention. But please do not confuse the letter M in $M_r$ for monoid; it stands rather for ``modular.'' We can show that each $M_r$, for $r \geq 2$, is a core of some upho lattice by constructing an appropriate monoid. 

\begin{thm}\label{thm:rank_two}
Let $r \geq 2$ and let $M \coloneqq \langle x_1,\ldots, x_r\mid x_ix_1=x_1^2 \textrm{ for all $i=2,\ldots,r$}\rangle$.\footnote{I thank Benjamin Steinberg for clarifying what the simplest presentation of this monoid is.} Then $M$ satisfies the conditions of \cref{lem:monoid}, so that $\mathcal{L} \coloneqq (M,\leq_L)$ is an upho lattice. Its core is $L \coloneqq M_r$. We have~$F(\mathcal{L};x)^{-1} = \chi^*(L;x) = 1-rx+(r-1)x^2$.
\end{thm}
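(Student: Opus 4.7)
The plan is to establish a normal form for elements of $M$ and read off all the required properties. The rewriting system with rules $x_i x_1 \to x_1^2$ for $i \geq 2$ is length-preserving, clearly terminates, and is locally confluent (the only critical overlaps are $x_i x_j x_1$ and $x_i x_1 x_1$ with $i, j \geq 2$, and both resolve immediately); hence every equivalence class of words has a unique irreducible representative. These representatives are exactly the words $x_1^a y_1 \cdots y_b$ with $a, b \geq 0$ and $y_1, \ldots, y_b \in \{x_2, \ldots, x_r\}$, i.e., words in which $x_1$ never follows a non-$x_1$ letter. In particular, $M$ is homogeneously finitely generated and $\mathcal{L} \coloneqq (M, \leq_L)$ is a finite type $\mathbb{N}$-graded poset.

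Using the normal forms, one derives the multiplication rule for $u = x_1^a y_1 \cdots y_b$ and $v = x_1^c z_1 \cdots z_d$: if $b = 0$, then $uv = x_1^{a+c} z_1 \cdots z_d$; if $b \geq 1$ and $c = 0$, then $uv = x_1^a y_1 \cdots y_b z_1 \cdots z_d$; and if $b, c \geq 1$, then $uv = x_1^{a+b+c} z_1 \cdots z_d$. The third case rests on the identity $y_1 \cdots y_b \cdot x_1 = x_1^{b+1}$, which follows by induction from $y_b x_1 = x_1^2$. From this product rule, left cancellation is immediate, since the normal form of $uv$ determines that of $v$ once the normal form of $u$ is fixed. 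The rule also classifies the left divisors of a normal-form element $w = x_1^k y'_1 \cdots y'_\ell$: either an initial segment $x_1^k y'_1 \cdots y'_s$ ($0 \leq s \leq \ell$), or an arbitrary element $x_1^\alpha t_1 \cdots t_\beta$ with $\beta \geq 0$, $t_i \in \{x_2, \ldots, x_r\}$ and $\alpha + \beta \leq k - 1$. In particular, every element of rank at most $k-1$ left-divides $x_1^k$.

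Hence any two elements $u, v \in M$ have common right multiples (for instance, $x_1^N$ for $N$ sufficiently large), so a minimum-rank common right multiple exists. A case analysis on the normal forms of $u$ and $v$, splitting on whether one left-divides the other and on the structure of their longest common initial segment, shows that this minimum-rank common right multiple is in fact unique. This verifies the hypotheses of \Cref{lem:monoid}, so $\mathcal{L}$ is an upho lattice. The main obstacle in the plan is this case analysis for the existence and uniqueness of LCMs.

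Finally, the atoms of $\mathcal{L}$ are $x_1, \ldots, x_r$ (pairwise incomparable), and $x_1^2$ is an upper bound for them, since $x_1 \cdot x_1 = x_1^2$ and $x_i \cdot x_1 = x_1^2$ for $i \geq 2$. Any strict predecessor of $x_1^2$ has rank at most $1$, and since $r \geq 2$, no such predecessor dominates every atom; hence the join of all atoms is exactly $x_1^2$. The core $[\hat{0}, x_1^2]$ is therefore a rank-$2$ lattice with $r$ atoms, namely $M_r$. A direct M\"obius computation in $M_r$ yields $\mu(\hat{0}, x_1^2) = r - 1$, so $\chi^*(M_r; x) = 1 - rx + (r-1)x^2$; combining with \Cref{cor:upho_lat_rgf} gives $F(\mathcal{L}; x)^{-1} = \chi^*(M_r; x) = 1 - rx + (r-1)x^2$.
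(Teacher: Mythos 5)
Your overall strategy (normal form, explicit product rule, left cancellativity, common multiples via powers of $x_1$) is essentially the same as the paper's, which proves the more general \cref{thm:chains} and obtains \cref{thm:rank_two} as the case $n=2$; your normal form $x_1^a y_1\cdots y_b$ is exactly the paper's normal form specialized to $n=2$. The normal-form argument, the product rule, left cancellation, the divisor classification, the identification of the core with $M_r$, and the characteristic polynomial computation are all correct. (One small inaccuracy: the words $x_ix_jx_1$ and $x_ix_1x_1$ each contain only \emph{one} redex for the rules $x_ix_1\to x_1^2$, so there are in fact no overlapping redexes at all and local confluence is vacuous; this does not affect anything.)

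The one genuine gap is exactly where you flag it: the \emph{uniqueness} of the minimum-length common right multiple, i.e., that minimal upper bounds coincide, is asserted via ``a case analysis \ldots shows'' but never carried out, and this is precisely the step where the lattice property lives --- \cref{lem:monoid} without it only gives an upho poset. The paper closes this with a short structural observation: the only elements of $\mathcal{L}$ covering more than one element are the powers $x_1^m$ ($m\geq 2$), and these form a chain, so any minimal upper bound of an incomparable pair must be one of them and is therefore unique. Your own left-divisor classification actually yields an equally quick finish, which you should write out: if $u$ and $v$ are incomparable and $w=x_1^k y_1'\cdots y_\ell'$ is a common right multiple, then $u$ and $v$ cannot both be initial segments of $w$ (initial segments are totally ordered by prefix), and if one is an initial segment while the other has length $\le k-1$, then the latter divides $x_1^k$ and hence divides the former, again contradicting incomparability; so both have length $\le k-1$, forcing $k\ge \max(\ell(u),\ell(v))+1$ and hence $x_1^{\max(\ell(u),\ell(v))+1}\leq_L x_1^k \leq_L w$. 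Since $x_1^{\max(\ell(u),\ell(v))+1}$ is itself a common right multiple, it is the join. With that paragraph added, your proof is complete.
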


Note that \cref{thm:rank_two} says all rank two finite graded lattices (with at least two atoms) are cores. In \cref{sec:obstruct} we will see that not all rank three lattices are cores.

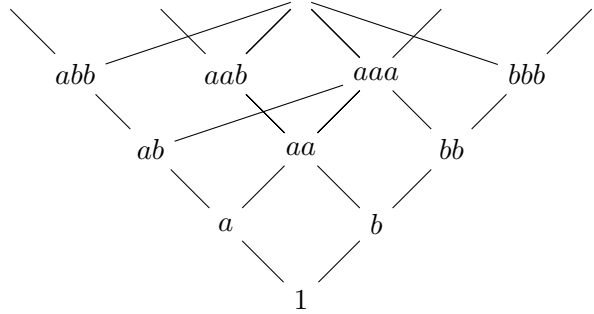
\begin{figure}
\begin{tikzpicture}
\node (A) at (0,0) {$1$};
\node (B) at (-1,1) {$a$};
\node (C) at (1,1) {$b$};
\node (D) at (-2,2) {$ab$};
\node (E) at (0,2) {$aa$};
\node (F) at (2,2) {$bb$};
\node (G) at (-3,3) {$abb$};
\node (H) at (-1,3) {$aab$};
\node (I) at (1,3) {$aaa$};
\node (J) at (3,3) {$bbb$};
\node (K) at (-4,4) { };
\node (L) at (-2,4) { };
\node (M) at (0,4) { };
\node (N) at (2,4) { };
\node (O) at (4,4) { };
\draw (B) -- (A) -- (C);
\draw (D) -- (B) -- (E) -- (C) -- (F);
\draw (G) -- (D) -- (I) -- (E) -- (H) -- (E) -- (I) -- (F) -- (J);
\draw (G) -- (M) -- (H) -- (M) -- (I) -- (M) -- (J);
\draw (G) -- (K);
\draw (H) -- (L);
\draw (I) -- (N);
\draw (J) -- (O);

\end{tikzpicture}
\caption{The monoid $M=\langle a,b\mid ba=aa\rangle$  from the case $r=2$ of \cref{thm:rank_two}, an upho lattice with core $M_2$.} \label{fig:r2}
\end{figure}

\begin{example} \label{ex:r2}
The monoid $M$ from the case $r=2$ of \cref{thm:rank_two} is depicted in \cref{fig:r2}. For ease of display, we have written $a=x_1$ and $b=x_2$ in this figure.
\end{example}

\begin{remark}
Notice that the upho lattice in~\cref{ex:r2} is isomorphic to the lattice denoted $\mathcal{B}^{(2)}_{\infty}$ in~\cref{subsec:bool}. In fact, for any $r\geq 2$, the upho lattice from~\cref{thm:rank_two} is isomorphic to the lattice denoted $\mathcal{L}^{(2)}_{\infty}$ in \cref{subsec:limit}, when $L_0,L_1,\ldots$ is any uniform sequence of supersolvable geometric lattices with $L_2=M_r$. (Such sequences always exist: for $r=2$ we can take the sequence of Boolean lattices, as already discussed; and for $r\geq 3$ we can let $G$ be a group with $r-2$ elements and consider the associated Dowling lattices from \cref{subsec:dowling}.) So, for example, the upho lattice from the case~$r=3$ will be isomorphic to the lattice in \cref{fig:partitions}.
\end{remark}

\begin{remark}
We will study the different ways of realizing the rank two lattices~$M_r$ as cores in much more detail in~\cite{hopkins2024upho2}. There we will show that there are many further monoids which give us many different upho lattices with core $M_r$.
\end{remark}

In fact, we can slightly generalize the construction from \cref{thm:rank_two}, as follows.

\begin{thm}\label{thm:chains}
Let $n, r \geq 2$. Let $M \coloneqq \langle x_1,\ldots, x_r\mid x_ix_1^{n-1}=x_1^{n} \textrm{ for $i=2,\ldots,r$}\rangle$. Then $M$ satisfies the conditions of \cref{lem:monoid}, so that $\mathcal{L} \coloneqq (M,\leq_L)$ is an upho lattice. Its core is $L\coloneqq \{\hat{0}\} \oplus r \cdot [n-1] \oplus\{\hat{1}\}$, i.e., the result of appending a minimum and a maximum to the disjoint union of $r$ $(n-1)$-element chains. We have~$F(\mathcal{L};x)^{-1} = \chi^*(L;x) = 1-rx+(r-1)x^n$.
\end{thm}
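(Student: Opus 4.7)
The plan is to verify the hypotheses of \cref{lem:monoid}---namely homogeneous finite generation, left cancellativity, and existence of least common right multiples---and then to identify the core and compute its reciprocal characteristic polynomial.

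The defining relations $x_ix_1^{n-1}=x_1^n$ are homogeneous of length $n$, so $M$ is homogeneously finitely generated with atoms $x_1,\ldots,x_r$, and hence $(M,\leq_L)$ is a finite type $\mathbb{N}$-graded poset. The foundation of the rest of the argument is a normal form obtained from the terminating and confluent rewriting system $x_ix_1^{n-1}\to x_1^n$ (for $i\geq 2$). Termination is witnessed by assigning $x_1$ weight $1$ and each $x_i$ ($i\geq 2$) weight $2$; confluence is automatic because no proper nonempty suffix of $x_ix_1^{n-1}$ (necessarily a power of $x_1$) can overlap a proper nonempty prefix of $x_jx_1^{n-1}$ (which always begins with $x_j$, $j\geq 2$). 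Every element of $M$ thus has a unique normal form: the unique representing word that avoids $x_ix_1^{n-1}$ as a subword for every $i\geq 2$.

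Left cancellativity reduces by induction on length to the case where the cancelled element is an atom. Prepending $x_1$ to a normal form creates no forbidden subword, so $x_1u$ has normal form $x_1\tilde u$ and $x_1u=x_1u'$ forces $u=u'$. Prepending $x_i$ for $i\geq 2$ to a normal form $\tilde u$ either leaves $x_i\tilde u$ in normal form (when $\tilde u$ does not begin with $x_1^{n-1}$) or rewrites to $x_1^{m+1}v$ (when $\tilde u=x_1^mv$ with $m\geq n-1$ maximal and $v$ not starting with $x_1$); in either case $\tilde u$ is uniquely recoverable from the normal form of $x_iu$, so $x_iu=x_iu'$ implies $u=u'$.

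The main obstacle will be showing that every pair $a,b\in M$ admits a least common right multiple. Directly from the normal-form description one reads off that $x_i\leq_L z$ iff $\tilde z$ begins with $x_i$ (when $i=1$) or begins with $x_i$ or $x_1^n$ (when $i\geq 2$); consequently the join of any two distinct atoms is $x_1^n$. A short induction using $sx_1^{n-1}=x_1^n$ for every atom $s$ then shows that every element of length $p\geq 1$ is a left divisor of $x_1^{n+p-1}$, so any two elements already share at least one common right multiple. To extract the \emph{least} one, I would induct on $\rho(a)+\rho(b)$: if the normal forms of $a$ and $b$ share a leading atom $s$, write $a=sa'$ and $b=sb'$ and use the upho isomorphism $V_s\cong\mathcal{L}$ (afforded by left cancellativity via \cref{lem:monoid}) to reduce the problem to the existence of $a'\vee b'$; if the leading atoms differ, any common right multiple is also a right multiple of $x_1^n$, and one then constructs $a\vee b$ explicitly inside the finite interval $[\hat{0},x_1^N]$ for $N$ large enough that $a,b\leq_L x_1^N$, using the atom-join calculation and left cancellativity to verify minimality.

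Finally, I would identify the core and compute $\chi^*$. The left divisors of $x_1^n$ are $\hat{0}$, the atoms $x_1,\ldots,x_r$, the elements $x_ix_1^{k-1}$ for $1\leq i\leq r$ and $2\leq k\leq n-1$ (with the convention $x_1\cdot x_1^{k-1}=x_1^k$), and $\hat{1}=x_1^n$; these group into $r$ disjoint $(n-1)$-element chains strictly between $\hat{0}$ and $\hat{1}$, identifying $L$ with $\{\hat{0}\}\oplus r\cdot[n-1]\oplus\{\hat{1}\}$. For the M\"obius function, $\mu(\hat{0},\hat{0})=1$, $\mu(\hat{0},x_i)=-1$ for each atom, and $\mu(\hat{0},p)=0$ for every $p$ of rank $2,\ldots,n-1$ (since each such $[\hat{0},p]$ is a chain of length at least $2$). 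The identity $\sum_{p\in L}\mu(\hat{0},p)=0$ then forces $\mu(\hat{0},x_1^n)=r-1$, yielding $\chi^*(L;x)=1-rx+(r-1)x^n$; the stated formula for $F(\mathcal{L};x)$ follows immediately from \cref{cor:upho_lat_rgf}.
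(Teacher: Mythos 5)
Most of your proposal tracks the paper's proof closely and correctly: the normal form (which you justify more carefully than the paper does, via termination and the absence of critical pairs in the rewriting system $x_ix_1^{n-1}\to x_1^n$), the reduction of left cancellativity to prepending a single generator, the computation $a\,x_1^{n-1}=x_1^{\ell(a)+n-1}$ showing common upper bounds exist, and the identification of $[\hat{0},x_1^n]$ with $\{\hat{0}\}\oplus r\cdot[n-1]\oplus\{\hat{1}\}$ together with the M\"obius computation are all sound and essentially match the paper.

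The genuine gap is exactly where you predicted it: the existence of \emph{least} common right multiples when $a=sa'$ and $b=tb'$ have different leading atoms. Knowing that every common upper bound is $\geq_L x_1^n$ does not by itself produce a least one, and your proposed induction on $\rho(a)+\rho(b)$ does not close in this case. Indeed, unwinding the condition $z=x_1^nw\geq_L a,b$ via left cancellation gives that the common upper bounds of $a,b$ are precisely $x_1\cdot\{$common upper bounds of $a'$, $b'$, \emph{and} $x_1^{n-1}\}$; this is a three-way join whose total rank $\ell(a')+\ell(b')+(n-1)$ is not smaller than $\ell(a)+\ell(b)$ once $n\geq 3$, so the induction does not terminate. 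The phrase ``constructs $a\vee b$ explicitly inside $[\hat{0},x_1^N]$\ldots to verify minimality'' is not an argument: a finite interval guarantees minimal upper bounds, not a unique minimal one. The paper's proof supplies the missing structural fact: the only elements of $\mathcal{L}$ covering more than one element are the powers $x_1^m$ with $m\geq n$ (this follows from your normal form by analyzing when appending $x_1$ triggers a cascade of rewrites that collapses the whole word to a power of $x_1$), and these form a chain. Hence any minimal common upper bound of two incomparable elements, which necessarily covers at least two elements, lies in this chain, so the minimal common upper bound is unique and is the join. I recommend you replace your case analysis on leading atoms with this covering argument; everything else in your write-up can stand.
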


The case $n=2$ of \cref{thm:chains} is \cref{thm:rank_two}.

\begin{proof}[Proof of \cref{thm:chains}]
The first thing to notice is that elements of $M$ have a normal form which is easy to describe. Namely, any $u \in M$ can be written, in a unique way, in the form $u=vw$ where $v=x_1^m$ for $m \geq 0$, and $w$ is any word in the~$x_i$ which does not start with an $x_1$ and does not have $x_1^{n-1}$ as a (consecutive) subword. Indeed, to convert a word in the $x_i$ to this normal form, find the last occurrence of~$x_1^{n-1}$ as a subword in this word, and repeatedly use the defining relations to make all preceding letters equal to $x_1$. 

With this normal form in hand, it is straightforward to show that $M$ is left cancellative. We wish to show that whenever we have $a,b,c \in M$ with $b\neq c$ we must have $ab \neq ac$. By considering a counterexample which has $a$ of minimal length, it is enough to consider the case that $a$ is one of the generators $x_i$. Further, we can assume that $b$ and $c$ are written in the normal form. Then, if $i=1$, $x_1b$ and $x_1c$ will be written in normal form, and hence evidently $x_1b \neq x_1c$. On the other hand, if $i \geq 2$, the normal form of $x_ib$ might be $x_1b$, and likewise the normal formal of $x_ic$ might be $x_1c$, but from the fact that $b\neq c$ it is still clear that $x_ib\neq x_ic$.

Now that we have shown $M$ is left cancellative, it follows from~\cref{lem:monoid} that $\mathcal{L} = (M,\leq_L)$ is a (finite type $\mathbb{N}$-graded) upho poset. What remains to show is that~$\mathcal{L}$ is a lattice. As explained in \cref{lem:monoid}, we only need to show that joins exist. So let $u,v\in \mathcal{L}$ be any two elements. Our goal is to show that their join~$u \vee v$ exists, so we can assume that $u$ and $v$ are incomparable. Notice that the only elements in~$\mathcal{L}$ which cover more than one element are $x_1^{m}$ for $m\geq n$. Moreover, these elements are totally ordered. Hence, if $u$ and $v$ have an upper bound in $\mathcal{L}$, they will have a least upper bound (which will be $x_1^{m}$ for some $m\geq n$). So we only need to show that $u$ and $v$ do have an upper bound. But this is also easy to see. Say the length of $u$ is $\ell(u)=m_1$ and the length of $v$ is $\ell(v)=m_2$ and suppose $m_1 \geq m_2$ by symmetry. Then $ux_1^{n-1}=x_1^{n-1+m_1}=vx_1^{n-1+(m_1-m_2)}$ will be an upper bound for $u$ and $v$.

That the core of $\mathcal{L}$ is $L=\{\hat{0}\} \oplus r \cdot [n-1] \oplus \{\hat{1}\}$ is clear, since $x_1^n$ will be the join of the atoms $x_1,\ldots,x_r$. That $F(\mathcal{L};x)^{-1} =\chi^*(L;x)$ is then of course a consequence of \cref{cor:upho_lat_rgf}, and that $F(\mathcal{L};x)^{-1} =\chi^*(L;x) = 1-rx+(r-1)x^n$ is also straightforward. So we are done.
\end{proof}

\subsection{Garside monoids and Coxeter groups}

As mentioned, to produce sophisticated examples of monoids satisfying the conditions of \cref{lem:monoid} we need to employ the theory of Garside monoids. In our discussion of this theory, we generally follow the terminology of~\cite{dehornoy1999gaussian, dehornoy2015foundations}. Let us review this terminology now.

A bounded atomic, finitely generated monoid is called \dfn{right Gaussian} if it is left cancellative and every pair of elements have a least (with respect to $\leq_L$) common right multiple. Dually, it is called \dfn{left Gaussian} if it is right cancellative and every pair of elements have a least (with respect to $\leq_R$) common left multiple. It is called a \dfn{Gaussian monoid} if it is both left and right Gaussian. 

A Gaussian monoid $M$ is called a \dfn{Garside monoid} if there exists a~$\Delta \in M$ for which the set of left divisors of $\Delta$ equals the set of right divisors of $\Delta$, and for which this set is finite and generates $M$. This $\Delta$ is called a \dfn{Garside element} for $M$. In general, a Garside monoid will have multiple Garside elements, but it will have a unique minimal (with respect to either $\leq_L$ or $\leq_R$) one. We assume from now on that $\Delta$ is always chosen to be minimal. This minimal Garside element $\Delta$ is the join (with respect to either $\leq_L$ or $\leq_R$) of the atoms of the Garside monoid $M$. The (left or right) divisors of $\Delta$ are called the \dfn{simple elements} of $M$. Notice that by definition there are finitely many simple elements.

If $G$ is a group and $M$ is a monoid, we say $G$ is a \dfn{group of (left) fractions} of~$M$ if there is a monoid embedding $M \subseteq G$ for which every element of $G$ has the form $h^{-1}g$ with $h,g \in M\subseteq G$. A group $G$ is called a \dfn{Garside group} if it is the group of fractions of a Garside monoid $M$.

For a monoid $M$ belonging to any of the above classes (i.e., left/right Gaussian, Gaussian, or Garside), let us say that $M$ is homogeneous if it is \emph{homogeneously} finitely generated. 

Evidently, the monoids satisfying the conditions of \cref{lem:monoid} are exactly the homogeneous right Gaussian monoids. Thus, we could restrict our attention to those. However, the important examples of monoids coming from finite Coxeter groups are in fact Garside monoids, so we will work with Garside monoids. The class of Garside monoids was introduced because these monoid, and especially their associated Garside groups, enjoy good algorithmic properties, like a solvable word problem. But these algorithmic properties will not concern us here. The point for us is the following, which follows immediately from~\cref{lem:monoid} and the definitions we just went over.

\begin{lemma} \label{lem:garside}
Let $M$ be a homogeneous Garside monoid. Then $\mathcal{L}\coloneqq (M,\leq_L)$ is an upho lattice whose core is its lattice of simple elements.
\end{lemma}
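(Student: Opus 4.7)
The plan is to observe that this lemma is essentially a packaging of definitions, so the proof amounts to checking that a homogeneous Garside monoid satisfies the hypotheses of \cref{lem:monoid} and then matching up the lattice-theoretic notion of core with the monoid-theoretic notion of simple elements. I do not expect a real obstacle; rather, each clause of the conclusion should be traceable directly to a clause in the definition of ``homogeneous Garside monoid.''

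First I would verify the hypotheses of \cref{lem:monoid}. By assumption $M$ is a homogeneous Garside monoid, so in particular it is homogeneously finitely generated. As a Garside monoid it is Gaussian, and in particular right Gaussian, which by definition means $M$ is left cancellative and every pair of elements of $M$ has a least common right multiple with respect to~$\leq_L$. Applying \cref{lem:monoid} then gives that $\mathcal{L} = (M,\leq_L)$ is an upho lattice, settling the first half of the statement.

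For the second half, I would identify the core of $\mathcal{L}$ with the lattice of simple elements of~$M$. Recall from the preliminaries on monoids that, since $M$ is bounded atomic (being homogeneously finitely generated), the minimum of $(M,\leq_L)$ is the monoid identity $1$ and the poset-theoretic atoms of $\mathcal{L}$ coincide with the monoid-theoretic atoms of $M$. Let $s_1,\ldots,s_r$ denote these atoms. By the discussion preceding the lemma, the minimal Garside element $\Delta\in M$ is characterized as the join (with respect to $\leq_L$) of the atoms of $M$, so $\Delta = s_1 \vee \cdots \vee s_r$ inside $\mathcal{L}$. Consequently, the core is
\[
[\hat{0},\, s_1 \vee \cdots \vee s_r] \;=\; [1,\Delta] \;=\; \{\, x \in M : x \leq_L \Delta \,\},
\]
which is exactly the set of left divisors of $\Delta$, i.e., the set of simple elements of~$M$. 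Since meets and joins of simple elements (which are just certain left divisors of $\Delta$) are computed the same way inside the interval $[1,\Delta]\subseteq \mathcal{L}$ as they are inside the standalone lattice of simple elements, this identification is an isomorphism of lattices, completing the proof.

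The only point that requires any care is the claim that the minimal Garside element $\Delta$ equals the join of the atoms in~$\leq_L$; this was stated (without proof) in the body text just above the lemma as part of the standing description of Garside monoids, so I would simply cite that fact and the definition of ``simple element'' as the left divisors of $\Delta$. There is no further computation to do.
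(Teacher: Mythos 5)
Your proof is correct and follows exactly the route the paper intends: the paper gives no separate proof, stating only that the lemma ``follows immediately from \cref{lem:monoid} and the definitions,'' which is precisely the unpacking you carry out (right Gaussian $\Rightarrow$ hypotheses of \cref{lem:monoid}; minimal Garside element $=$ join of atoms; simple elements $=$ left divisors of $\Delta$). Nothing is missing.
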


In \cref{lem:garside}, by ``lattice of simple elements,'' we mean the set of simple elements of $M$ under the partial order $\leq_L$, which will form a finite graded lattice.

Our next objective is to review the most important examples of Garside monoids, which come from finite Coxeter groups. Accordingly, we very briefly review the theory of finite Coxeter groups. See~\cite{humphreys1990reflection, bjorner2005combinatorics} for textbook accounts. Since we will work with both groups and monoids, we will be careful to distinguish when a presentation is a group presentation versus a monoid presentation.

A \dfn{Coxeter system} $(W,S)$ is a group $W$ together with a set $S=\{s_1,\ldots,s_r\} \subseteq W$ of generators for which $W = \langle S \mid s_i^2 = 1 \textrm{ for all $i$}, (s_is_j)^{m_{i,j}} = 1 \textrm{ for all $i<j$} \rangle$ for certain integer parameters $m_{i,j} \geq 2$.\footnote{Often $m_{i,j}=\infty$ is allowed as well, but since we are exclusively interested in \emph{finite} Coxeter systems this technicality will not concern us.} The number $r$ of generators in $S$ is called the \dfn{rank} of the Coxeter system. We say the Coxeter system $(W,S)$ is \dfn{finite} if the group~$W$ is finite.

For $(W,S)$ a Coxeter system, the group $W$ is called a \dfn{Coxeter group}, and the set of generators $S$ is called the set of \dfn{simple reflections}. If $g,h\in G$ are two elements of a group $G$ we denote the conjugation of $g$ by $h$ as $g^{h} \coloneqq h^{-1}gh$. The set~$T\coloneqq \{s^w\colon s \in S, w \in W\}\subseteq W$ of $W$-conjugates of $S$ is called the set of \dfn{reflections} of the Coxeter system $(W,S)$. The terminology of ``simple reflections'' and ``reflections'' comes from a geometric picture for Coxeter groups, but we will avoid discussion of this geometric picture here.

For example, the symmetric group $S_n$ of permutations of $[n]$ is a finite Coxeter group of rank $n-1$, with the simple reflections being the adjacent transpositions $s_i = (i,i+1)$ for $i =1,\ldots,n-1$. Here we have $m_{i,j}=3$ for all $1\leq i < j \leq n-1$, and the reflections are all the transpositions $t_{i,j} \coloneqq (i,j)$ for $1 \leq i < j \leq n$.

In fact, all the finite Coxeter systems have been classified. They fall into a number of infinite families together with a few exceptional examples; these are the (Cartan--Killing) types. See~\cite{humphreys1990reflection, bjorner2005combinatorics} for a complete account of the classification. In this classification, the symmetric group $S_n$ is the Type $A_{n-1}$ Coxeter group. We also remark that the finite Coxeter groups are known to be precisely the finite real reflection groups. 

Continue to let $(W,S)$ be a Coxeter system. For $s_i,s_j \in S$ and $m\geq 0$, we write $(s_i,s_j)^{[m]} \coloneqq s_is_js_is_j\cdots$, a word with~$m$ letters. The \dfn{braid group} associated to~$(W,S)$ is the group $\langle S \mid (s_i,s_j)^{[m_{i,j}]} = (s_j,s_i)^{[m_{i,j}]} \textrm{ for all $i<j$} \rangle$. Notice that there is a canonical embedding of a Coxeter group into its braid group. For $W=S_n$, this braid group is usually denoted~$B_n$. The elements of $B_n$ can be represented diagrammatically as braids, i.e., collections of strands connecting $n$ initial points to~$n$ terminal points, and hence the name.

The braid group associated to any finite Coxeter system is a Garside group. In fact, there are \emph{two different ways} to realize such a braid group as a Garside group, i.e., there are two different associated Garside monoids whose group of fractions is the braid group. These two Garside monoids yield two upho lattices, with two different cores, associated to any finite Coxeter system. To conclude this section, we review the construction and combinatorics of these two Garside monoids associated to a finite Coxeter system.

\subsubsection{The weak order and the classical braid monoid}

So now let us fix a finite Coxeter system $(W,S=\{s_1,\ldots,s_r\})$, with parameters $m_{i,j}$ as above. 

The $\dfn{length}$ $\ell(w)$ of an element $w \in W$ is the minimum length of an expression $w=s_{i_1}\cdots s_{i_k}$ for $w$ as a product of the simple reflections $s_i$. The \dfn{(right) weak order} of $W$ is the partial order with cover relations $w \lessdot ws$ whenever $\ell(ws) = \ell(w) + 1$ for $w\in W$ and $s \in S$. This indeed defines a partial order, which can be equivalently described as $u \leq w$ if and only $\ell(u) + \ell(u^{-1}w) = \ell(w)$ for any $u,w \in W$. Weak order of a finite Coxeter group is a finite graded lattice, with rank function $\ell$. But note that the rank of weak order is unrelated to the rank of the Coxeter system: the rank of weak order is $\ell(w_0)$, where $w_0\in W$ is the maximum element under weak order, which is called the \dfn{longest element}. 

See~\cite[\S3]{bjorner2005combinatorics} for all these basic statements about weak order.

\begin{example} \label{ex:weak_s3}
In \cref{fig:weak_s3} we depict the weak order of the symmetric group~$S_3$. In this figure, we have written permutations in one-line notation. We have also labeled each cover relation with the corresponding simple reflection.
\end{example}

\begin{figure}
\begin{tikzpicture}
\node (1) at (0,0) {$123$};
\node (2) at (-1,1) {$213$};
\node (3) at (1,1) {$132$};
\node (4) at (-1,2) {$231$};
\node (5) at (1,2) {$312$};
\node (6) at (0,3) {$321$};
\draw (1) -- (2) node[midway,left] {$s_1$};
\draw (1) -- (3) node[midway,right] {$s_2$};
\draw (2) -- (4) node[midway,left] {$s_2$};
\draw (3) -- (5) node[midway,right] {$s_1$};
\draw (4) -- (6) node[midway,left] {$s_1$};
\draw (5) -- (6) node[midway,right] {$s_2$};
\end{tikzpicture}
\caption{The weak order of the symmetric group $S_3$.} \label{fig:weak_s3}
\end{figure}
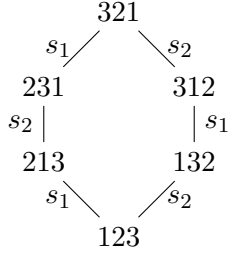

We now describe how to realize the weak order of any finite Coxeter group as a core of an upho lattice. Let $\mathbf{S} = \{\mathbf{s}_1,\ldots,\mathbf{s}_r\}$ be a collection of letters corresponding to the simple reflections $S=\{s_1,\ldots,s_r\}$. For $\mathbf{s}_i, \mathbf{s}_j \in \mathbf{S}$ and $m\geq 0$, let us again write $(\mathbf{s}_i,\mathbf{s}_j)^{[m]} \coloneqq \mathbf{s}_i\mathbf{s}_j\mathbf{s}_i\mathbf{s}_j\cdots $, a word with $m$ letters. The \dfn{classical braid monoid} for~$(W,S)$ is the monoid~$\langle \mathbf{S} \mid (\mathbf{s}_i,\mathbf{s}_j)^{[m_{i,j}]} = (\mathbf{s}_j,\mathbf{s}_i)^{[m_{i,j}]} \textrm{ for $i < j$}\rangle$. 

The point of this construction is the following:

\begin{thm}[{See \cite[Ch.~IX, \S1]{dehornoy2015foundations}}] \label{thm:classical_braid}
For any finite Coxeter system $(W,S)$, its classical braid monoid is a (homogeneous) Garside monoid, whose group of fractions is the corresponding braid group, and whose lattice of simple elements is isomorphic to the weak order of $W$.
\end{thm}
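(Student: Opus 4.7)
The plan is to prove all four assertions via a careful analysis of the section $\iota \colon W \to M$ that lifts each Coxeter group element to its reduced-expression class in the monoid. First I would observe that each defining relation $(\mathbf{s}_i,\mathbf{s}_j)^{[m_{i,j}]} = (\mathbf{s}_j,\mathbf{s}_i)^{[m_{i,j}]}$ equates two words of the same length $m_{i,j}$, so the presentation is homogeneous; consequently $M$ has a well-defined length function $\ell \colon M \to \mathbb{N}$, and $(M,\leq_L)$ is finite type $\mathbb{N}$-graded.

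Next I would invoke Matsumoto's theorem, the key input from Coxeter theory: any two reduced expressions for an element $w \in W$ are connected by braid moves alone, without use of the quadratic relations $s_i^2 = 1$. This lets me define $\iota(w) \coloneqq [\mathbf{s}_{i_1}\mathbf{s}_{i_2}\cdots\mathbf{s}_{i_k}]$ for any reduced expression $w = s_{i_1}s_{i_2}\cdots s_{i_k}$, independent of the choice. The map $\iota$ is length-preserving and hence injective. Moreover, from the characterization of right weak order $u \leq w \iff \ell(u) + \ell(u^{-1}w) = \ell(w)$, one directly deduces that $u \leq w$ in weak order if and only if $\iota(u) \leq_L \iota(w)$ in $M$: a reduced expression for $w$ splits as a reduced expression for $u$ followed by a reduced expression for $u^{-1}w$, which translates to left divisibility in the monoid. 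Thus $\leq_L$ restricted to $\iota(W)$ is exactly the weak order of $W$.

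I would then set $\Delta \coloneqq \iota(w_0)$, where $w_0$ is the longest element of $W$. For every $w \in W$, the identities $\ell(w) + \ell(w^{-1}w_0) = \ell(w_0) = \ell(w_0 w^{-1}) + \ell(w)$ give $\Delta = \iota(w)\,\iota(w^{-1}w_0) = \iota(w_0 w^{-1})\,\iota(w)$, so every $\iota(w)$ is both a left and a right divisor of $\Delta$. Conversely, any divisor of $\Delta$ has length at most $\ell(w_0)$, and its word in the $\mathbf{s}_i$ appears inside a reduced expression for $w_0$, so by Matsumoto's theorem it represents an element of $\iota(W)$. Hence the left and right divisor sets of $\Delta$ both equal $\iota(W)$, which is finite and generates $M$ (the atoms $\mathbf{s}_i$ lie in it), so $\Delta$ is a Garside element.

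The main obstacle, and the heart of the proof, is establishing left and right cancellativity together with the existence of least common multiples, which is what upgrades $M$ to a Garside monoid. My approach would be Garside's word reversing procedure: any expression $u^{-1}v$ (with $u,v \in M$) is iteratively rewritten via the defining relations into the form $v' u'^{-1}$, certifying $uv' = vu'$ as a common right multiple. For the Coxeter braid presentation one shows this rewriting is terminating and confluent --- a finite check reducing to two-generator subwords via the exchange condition --- and confluence simultaneously yields cancellativity (the canonical form is unique) and the LCM property (reversing outputs the LCM). With $M$ shown to be Garside, its group of fractions is the group obtained by reading its presentation as a group presentation, which by definition is the braid group of $(W,S)$; and the lattice of simple elements under $\leq_L$ is $\iota(W)$ under $\leq_L$, which is the weak order of $W$ by the second step.
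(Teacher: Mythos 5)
The paper itself gives no proof of this theorem: it is quoted from Dehornoy et al.\ (and attributed historically to Garside, Brieskorn--Saito, and Deligne), so there is no ``paper's approach'' to compare against. Your skeleton is the standard one and most of it is sound: homogeneity of the presentation, the section $\iota\colon W\to M$ via Matsumoto/Tits, the identification of $\iota(W)$ with the weak order under $\leq_L$, and $\Delta=\iota(w_0)$ as the Garside element. (One small glossed point there: to get the converse direction $\iota(u)\leq_L\iota(w)\Rightarrow u\leq w$, and to see that every divisor of $\Delta$ lies in $\iota(W)$, you need that \emph{every} word representing $\iota(w)$ is a reduced word for $w$; this follows from homogeneity together with the canonical projection $M\to W$, but it should be said.)

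The genuine gap is in the step you yourself call the heart of the proof. Confluence of word reversing for an Artin--Tits presentation is verified via the cube condition on \emph{triples} of generators, not ``two-generator subwords,'' and what it buys you is completeness of reversing: left cancellativity, plus the fact that reversing computes the least common right multiple \emph{whenever a common right multiple exists}. It does not give termination, and termination is not a formal consequence of the relations: for an Artin--Tits monoid of infinite type (all $m_{i,j}$ finite but $W$ infinite, e.g.\ affine type), the presentation passes the same local checks, yet the set of atoms has no common right multiple and $(M,\leq_L)$ is not a lattice. Your sketch of this step nowhere uses the finiteness of $W$, so as written it would ``prove'' the lattice property in those cases too, which is false. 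The missing ingredient is to feed $\Delta$ back into the argument: show (by induction on length, using $\Delta=\iota(w)\,\iota(w^{-1}w_0)$ and the fact that conjugation by $\Delta$ permutes the atoms) that every element of length $k$ left-divides $\Delta^{k}$, so any two elements admit a common right multiple; only then does completeness of reversing, or the general theory of noetherian cancellative monoids, upgrade ``common multiples exist'' to ``least common multiples exist'' and make $M$ Garside. With that supplied, the final appeal to Ore's theorem for the group of fractions is fine.
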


\Cref{thm:classical_braid} traces back to work of Garside~\cite{garside1969braid}, in Type~A, and of Brieskorn--Saito~\cite{brieskorn1972artin} and Deligne~\cite{deligne1972immeubles}, in all finite types. For more on the history of this theorem, consult~\cite{dehornoy2015foundations}. \Cref{thm:classical_braid}, together with \cref{lem:garside}, implies the following.

\begin{cor}
For any finite Coxeter system $(W,S)$, the weak order of $W$ is the core of an upho lattice (namely, the corresponding classical braid monoid).
\end{cor}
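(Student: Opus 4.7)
The plan is to derive this corollary as an essentially immediate consequence of \cref{thm:classical_braid} chained with \cref{lem:garside}, both of which have already done all of the heavy lifting.

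First, I will apply \cref{thm:classical_braid} to the given finite Coxeter system $(W,S)$. This hands me the classical braid monoid $M = \langle \mathbf{S} \mid (\mathbf{s}_i,\mathbf{s}_j)^{[m_{i,j}]} = (\mathbf{s}_j,\mathbf{s}_i)^{[m_{i,j}]}\text{ for } i<j\rangle$ together with a canonical identification of its lattice of simple elements with the weak order of $W$. Along the way, I will note (to match the hypotheses of \cref{lem:garside}) that $M$ is \emph{homogeneously} finitely generated: every defining relation equates two words of the common length $m_{i,j}$, so the relations are homogeneous on the nose. Thus $M$ is a homogeneous Garside monoid in the sense of the preceding subsection.

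Second, I will feed $M$ into \cref{lem:garside} to conclude that $\mathcal{L} \coloneqq (M, \leq_L)$ is an upho lattice whose core is the lattice of simple elements of $M$. Combining this with the identification from the previous paragraph, the core of $\mathcal{L}$ is the weak order of $W$, which is exactly the claim.

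There is no real obstacle: the corollary is essentially a bookkeeping exercise once \cref{thm:classical_braid} and \cref{lem:garside} are both in hand. If anything, the only point of care is verifying that the two notions of ``core'' line up, i.e., that the core of $\mathcal{L}$ in the upho sense (the interval from $\hat{0}$ up to the join of the atoms) coincides with the lattice of simple elements of $M$ in the Garside sense (the left divisors of the minimal Garside element $\Delta$). But this is precisely the statement, recalled just before \cref{lem:garside}, that $\Delta$ is the $\leq_L$-join of the atoms of $M$; and the atoms of $(M,\leq_L)$ coincide with the monoid-theoretic atoms $\mathbf{s}_1,\ldots,\mathbf{s}_r$ because $M$ is bounded atomic.
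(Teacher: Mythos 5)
Your proposal is correct and matches the paper's argument exactly: the corollary is obtained by combining \cref{thm:classical_braid} with \cref{lem:garside}, and your extra remarks about homogeneity and the identification of the core with the lattice of simple elements are precisely the points the paper records in the surrounding discussion.
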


\begin{example}
In \cref{fig:braid_s3} we depict the classical braid monoid for the symmetric group~$S_3$. For ease of display, we have written $a=\mathbf{s}_1$ and $b=\mathbf{s}_2$ in this figure. Observe how the weak order from \cref{ex:weak_s3} is the core of this upho lattice.
\end{example}

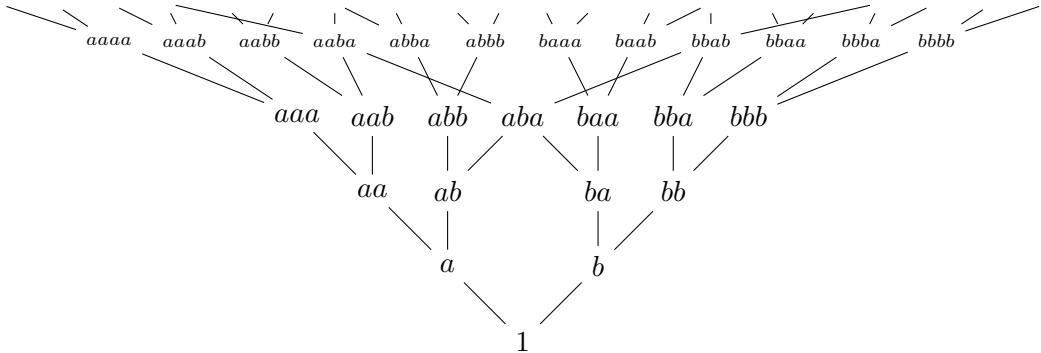
\begin{figure}
\begin{tikzpicture}
\node (A) at (0,0) {$1$};
\node (B) at (-1,1) {$a$};
\node (C) at (1,1) {$b$};
\node (D) at (-2,2) {$aa$};
\node (E) at (-1,2) {$ab$};
\node (F) at (1,2) {$ba$};
\node (G) at (2,2) {$bb$};
\node (H) at (-3,3) {$aaa$};
\node (I) at (-2,3) {$aab$};
\node (J) at (-1,3) {$abb$};
\node (K) at (0,3) {$aba$};
\node (L) at (1,3) {$baa$};
\node (M) at (2,3) {$bba$};
\node (N) at (3,3) {$bbb$};
\node (P) at (-5.5,4) {\tiny $aaaa$};
\node (Q) at (-4.5,4) {\tiny $aaab$};
\node (R) at (-3.5,4) {\tiny $aabb$};
\node (S) at (-2.5,4) {\tiny $aaba$};
\node (T) at (-1.5,4) {\tiny $abba$};
\node (U) at (-0.5,4) {\tiny $abbb$};
\node (V) at (0.5,4) {\tiny $baaa$};
\node (W) at (1.5,4) {\tiny $baab$};
\node (X) at (2.5,4) {\tiny $bbab$};
\node (Y) at (3.5,4) {\tiny $bbaa$};
\node (Z) at (4.5,4) {\tiny $bbba$};
\node (!) at (5.5,4) {\tiny $bbbb$};
\node (AA) at (-7,4.5) {}; 
\node (BB) at (-6.25,4.5) {}; 
\node (CC) at (-5.5,4.5) {}; 
\node (DD) at (-4.75,4.5) {}; 
\node (EE) at (-4,4.5) {}; 
\node (FF) at (-3.25,4.5) {}; 
\node (GG) at (-2.5,4.5) {}; 
\node (HH) at (-1.75,4.5) {}; 
\node (II) at (-1,4.5) {}; 
\node (JJ) at (-0.25,4.5) {}; 
\node (KK) at (0.25,4.5) {}; 
\node (LL) at (1,4.5) {}; 
\node (MM) at (1.75,4.5) {}; 
\node (NN) at (2.5,4.5) {}; 
\node (OO) at (3.25,4.5) {}; 
\node (PP) at (4,4.5) {}; 
\node (QQ) at (4.75,4.5) {}; 
\node (RR) at (5.5,4.5) {}; 
\node (SS) at (6.25,4.5) {}; 
\node (TT) at (7,4.5) {}; 
\draw (B) -- (A) -- (C);
\draw (D) -- (B) -- (E);
\draw (F) -- (C) -- (G);
\draw (H) -- (D) -- (I);
\draw (J) -- (E) -- (K) -- (F) -- (L);
\draw (M) -- (G) -- (N);
\draw (P) -- (H) -- (Q);
\draw (R) -- (I) -- (S) -- (K) -- (X) -- (M) -- (Y);
\draw (T) -- (J) -- (U);
\draw (V) -- (L) -- (W);
\draw (Z) -- (N) -- (!);
\draw (AA) -- (P) -- (BB);
\draw (CC) -- (Q) -- (DD) -- (S) -- (GG) -- (T) -- (HH);
\draw (EE) -- (R) -- (FF);
\draw (II) -- (U) -- (JJ);
\draw (KK) -- (V) -- (LL);
\draw (OO) -- (Y) -- (PP);
\draw (MM) -- (W) -- (NN) -- (X) -- (QQ) -- (Z) -- (RR);
\draw (SS) -- (!) -- (TT);
\end{tikzpicture}
\caption{The classical braid monoid $\langle a,b\mid aba = bab\rangle$ for the symmetric group $S_3$.} \label{fig:braid_s3}
\end{figure}

\begin{remark}
There is no product formula for the characteristic polynomial of the weak order of a finite Coxeter group~$W$, although there is an expression as a sum over ``parabolic longest elements.'' See, e.g., the recent paper~\cite{kim2024characteristic} for discussion of the characteristic polynomial of weak order.
\end{remark}

\subsubsection{The noncrossing partition lattice and the dual braid monoid}

Continue to fix a finite Coxeter system $(W,S)$ as above. There is a ``dual'' approach to Coxeter systems, where the role of the simple reflections~$S$ is replaced by \emph{all} the reflections~$T$. For background on the combinatorics associated to this dual approach, see the monograph~\cite{armstrong2009generalized}. 

The \dfn{absolute length} $\ell_T(w)$ of an element $w\in W$ is the minimum length of an expression $w=t_{1}\cdots t_{k}$ for $w$ as a product of reflections $t_i \in T$. The \dfn{absolute order} of~$W$ is the partial order with cover relations $w \lessdot wt$ whenever $\ell_T(wt) = \ell_T(w) + 1$ for $w \in W$ and $t\in T$. This indeed defines a partial order on $W$, which can be equivalently described as $u \leq w$ if and only $\ell_T(u) + \ell_T(u^{-1}w) = \ell_T(w)$ for any $u,w \in W$.  Absolute order of a finite Coxeter group is a finite graded poset, with rank function $\ell_T$, but it is not a lattice because it has multiple maximal elements. Indeed, all Coxeter elements are maximal elements in absolute order, where we recall that a \dfn{Coxeter element} $c=s_{i_1}s_{i_2}\cdots s_{i_r}$ is a product of all the simple reflections, each appearing once, in any order. 

But, the interval $[e,c]$ in absolute order from the identity element $e \in W$ to any fixed Coxeter element $c \in W$ \emph{is} a finite graded lattice. Moreover, the isomorphism type of this lattice does not depend on the choice of Coxeter element. This lattice is called the \dfn{noncrossing partition lattice} of $W$. Notice that the rank of the noncrossing partition lattice of $W$ is the same as the rank of the Coxeter system $(W,S)$. 

See~\cite[\S2.4 and 2.6]{armstrong2009generalized} for these basic facts about absolute order and noncrossing partition lattices.

\begin{figure}
\begin{tikzpicture}
\node (1) at (0,0) {\color{red} $\mathbf{e}$};
\node (2)  at (-2,1) {\color{red} $\mathbf{(1,2)}$};
\node (3) at (0,1) {\color{red} $\mathbf{(2,3)}$};
\node (4) at (2,1) {\color{red} $\mathbf{(1,3)}$};
\node (5) at (-1,2) {\color{red} $\mathbf{(1,2,3)}$};
\node (6) at (1,2) {$(1,3,2)$};
\draw[red,ultra thick] (1)--(2)--(5)--(3)--(1)--(4)--(5);
\draw (2)--(6)--(3)--(6)--(4);
\end{tikzpicture}
\caption{The absolute order of the symmetric group $S_3$. In red and bold is the noncrossing partition lattice of $S_3$.} \label{fig:noncrossing}
\end{figure}
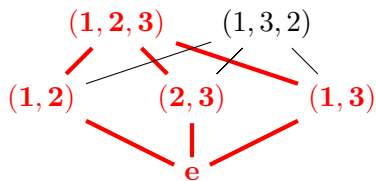

\begin{example} \label{ex:absolute_s3}
The absolute order of the symmetric group $S_3$ is depicted in \cref{fig:noncrossing}. Here we have written permutations using cycle notation, with $e$ denoting the identity. Also, in bold and in red is the noncrossing partition lattice of $S_3$, for the choice of Coxeter element $c=(1,2,3)$.
\end{example}

The noncrossing partition lattice for the symmetric group $S_n$ is isomorphic to the restriction of the partition lattice $\Pi_n$ to the set of noncrossing partitions. The isomorphism is given by the cycle decomposition. Here we say two subsets~$A,B\subseteq [n]$ are crossing if there are $i,j \in A$, $k,l \in B$ with $i < k < j < l$, and we say that a partition~$\pi$ of $[n]$ is noncrossing if no two of its blocks are crossing. This explains the name ``noncrossing partition lattice.'' See~\cite{simion2000noncrossing} for a survey on the classical (i.e., Type~A) noncrossing partition lattice.

We now describe how to realize the noncrossing partition lattice of any finite Coxeter group as a core of an upho lattice. Let $\mathbf{T} = \{\mathbf{t}\colon t \in T\}$ be a collection of letters corresponding to all the reflections. For $\mathbf{s}, \mathbf{t}\in T$ we write $\mathbf{t}^{\mathbf{s}}$ for the letter corresponding to the conjugate $t^s\in T$. The \dfn{dual braid monoid} associated to $(W,S)$ is the monoid $\langle \mathbf{T} \mid \mathbf{t}\mathbf{s} = \mathbf{s}\mathbf{t}^{\mathbf{s}} \textrm{ for all $\mathbf{s}\neq\mathbf{t} \in \mathbf{T}$}\rangle$. 

The point of this construction is the following:

\begin{thm}[{See \cite[Ch.~IX, \S2]{dehornoy2015foundations}}] \label{thm:dual_braid}
For any finite Coxeter system $(W,S)$, its dual braid monoid is a (homogeneous) Garside monoid, whose group of fractions is the corresponding braid group, and whose lattice of simple elements is isomorphic to the noncrossing partition lattice of $W$.
\end{thm}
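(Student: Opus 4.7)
The plan is to follow the approach to the dual braid monoid due to Bessis. Homogeneity is immediate from the presentation: each defining relation $\mathbf{t}\mathbf{s} = \mathbf{s}\mathbf{t}^{\mathbf{s}}$ has length two on both sides, so the dual braid monoid $M^*$ is homogeneously finitely generated, with the reflections $\mathbf{T}$ as its atoms. To produce a candidate Garside element, fix a Coxeter element $c \in W$ together with some reduced $T$-decomposition $c = t_1 t_2 \cdots t_r$ in the absolute order (where $r$ is the rank of $(W,S)$), and set $\boldsymbol{\Delta} \coloneqq \mathbf{t}_1 \mathbf{t}_2 \cdots \mathbf{t}_r \in M^*$. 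More generally, for each $w$ in the noncrossing partition lattice $[e,c]$, lift any reduced $T$-decomposition $w = u_1 \cdots u_k$ to $\mathbf{w}^* \coloneqq \mathbf{u}_1 \cdots \mathbf{u}_k \in M^*$.

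The first key step is to show that $\mathbf{w}^*$ depends only on $w$, not on the chosen decomposition. Any two reduced $T$-decompositions of $w$ are related by a sequence of \emph{Hurwitz moves} $(\ldots, t_i, t_{i+1}, \ldots) \mapsto (\ldots, t_{i+1}, t_i^{t_{i+1}}, \ldots)$, and this is exactly the rewriting $\mathbf{t}_i \mathbf{t}_{i+1} = \mathbf{t}_{i+1} \mathbf{t}_i^{\mathbf{t}_{i+1}}$ given by a defining relation of $M^*$. Hence $w \mapsto \mathbf{w}^*$ is a well-defined, length-preserving map from $[e,c]$ into $M^*$, and its image should coincide exactly with the set of left divisors of $\boldsymbol{\Delta}$; by a symmetric argument, the same image also comprises the right divisors of $\boldsymbol{\Delta}$.

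Next comes the Garside structure on $M^*$: left and right cancellativity, and existence of left and right lcm's. Cancellativity can be obtained by constructing a greedy left normal form on words in $\mathbf{T}$, in which each word factors canonically as $\mathbf{w}_1^* \mathbf{w}_2^* \cdots$ with the successive noncrossing-partition elements taken as large as possible, and showing that this descends to the monoid. The essential input for the existence of lcm's is that $[e,c]$ is itself a lattice; this is the main obstacle. It is not apparent from the definition of absolute order and was only established uniformly for all finite real reflection groups by Brady--Watt, using the geometry of fixed subspaces of reflections. Granting this, standard Garside theory (see \cite[Ch.~IV--VI]{dehornoy2015foundations}) lifts the meet and join on $[e,c]$ to lcm's in $M^*$, at which point the characterization of simple elements as left divisors of $\boldsymbol{\Delta}$ gives the desired isomorphism between the lattice of simple elements of $M^*$ and the noncrossing partition lattice of $W$.

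Finally, to identify the group of fractions of $M^*$ with the braid group of $(W,S)$, we invoke Ore's theorem: a left cancellative monoid in which every two elements have a common right multiple embeds into a group of fractions, and this group is presented by the same generators and relations. It thus suffices to show that the abstract group $\langle \mathbf{T} \mid \mathbf{t}\mathbf{s} = \mathbf{s}\mathbf{t}^{\mathbf{s}} \rangle$ coincides with the braid group presented on the simple reflections with Artin relations $(\mathbf{s}_i,\mathbf{s}_j)^{[m_{i,j}]} = (\mathbf{s}_j,\mathbf{s}_i)^{[m_{i,j}]}$. One direction is straightforward: in the braid group, conjugation of a reflection lift by another reflection lift yields the relation $\mathbf{t}\mathbf{s} = \mathbf{s}\mathbf{t}^{\mathbf{s}}$. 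The reverse direction is a presentation comparison carried out along the chosen Coxeter element $c$, essentially Bessis's original argument. With this isomorphism in hand, \cref{lem:garside} then immediately yields the upho lattice claim.
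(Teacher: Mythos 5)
The paper does not prove \cref{thm:dual_braid}; it cites it (Birman--Ko--Lee in Type~A, Brady--Watt and Bessis in general, as compiled in Dehornoy et al.). Your outline is a faithful sketch of that literature proof, and it correctly isolates the two genuinely deep inputs: the lattice property of the interval $[e,c]$ in absolute order (Brady--Watt) and the identification of the enveloping group of the dual presentation with the Artin braid group (Bessis). Two further nontrivial inputs are used implicitly and deserve to be named: the transitivity of the Hurwitz action on reduced $T$-decompositions of elements of $[e,c]$ (needed for your map $w \mapsto \mathbf{w}^*$ to be well defined), and the \emph{injectivity} of that map together with the identification of its image with the divisors of $\Delta$ --- this is where essentially all of the technical work lives (in Dehornoy et al.\ it is packaged as showing that the partial product on $[e,c]$ is a ``Garside germ''), and your sketch acknowledges it only with the word ``should.'' A small slip: Ore's embedding theorem requires two-sided cancellativity plus common multiples, not left cancellativity alone; this is harmless here since a Garside monoid is cancellative by definition, but as written the invocation is too weak.

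One concrete gap does affect correctness as literally written. Both the paper's definition and your sketch impose the relation $\mathbf{t}\mathbf{s}=\mathbf{s}\mathbf{t}^{\mathbf{s}}$ for \emph{all} pairs of distinct reflections, but the dual braid monoid relative to a chosen Coxeter element $c$ only imposes these relations for pairs with $ts \leq c$ in absolute order (equivalently, the relations arising from Hurwitz moves inside $[e,c]$). The distinction matters: for $W=S_3$ with $a=\mathbf{t_{1,2}}$, $b=\mathbf{t_{2,3}}$, $c=\mathbf{t_{1,3}}$, the unrestricted relation set contains both $ab=bc=ca$ and $ba=ac=cb$, so the three atoms acquire two distinct minimal common upper bounds at rank two and $(M,\leq_L)$ fails to be a lattice --- note that \cref{fig:monoid} uses only the restricted relations $ab=bc=ca$. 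Your Hurwitz argument in fact only ever invokes the restricted relations, so your construction of $\Delta$ and of the simple elements is the right one; but the presentation at the start of the proof must be cut down to the noncrossing pairs, or else the monoid you are analyzing is not the one whose simples form $[e,c]$.
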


\Cref{thm:dual_braid} traces back to work of Birman--Ko--Lee~\cite{birman1998new}, in Type~A, and of Brady--Watt~\cite{brady2002artin} and Bessis~\cite{bessis2003dual}, in all finite types. Again, for more on the history, consult~\cite{dehornoy2015foundations} or \cite{armstrong2009generalized}. \Cref{thm:dual_braid}, together with \cref{lem:garside}, implies the following.

\begin{cor}
For any finite Coxeter system $(W,S)$, the noncrossing partition lattice $W$ is the core of an upho lattice (namely, the corresponding dual braid monoid).
\end{cor}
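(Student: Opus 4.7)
The plan is to simply invoke the two results immediately preceding the corollary. Specifically, by \cref{thm:dual_braid}, the dual braid monoid $M_{\mathrm{dual}}(W,S) \coloneqq \langle \mathbf{T} \mid \mathbf{t}\mathbf{s} = \mathbf{s}\mathbf{t}^{\mathbf{s}} \textrm{ for } \mathbf{s}\neq\mathbf{t}\in\mathbf{T}\rangle$ is a homogeneous Garside monoid whose lattice of simple elements is isomorphic to the noncrossing partition lattice $\mathrm{NC}(W)$. Then \cref{lem:garside} directly applies: any homogeneous Garside monoid $M$ yields an upho lattice $\mathcal{L}\coloneqq(M,\leq_L)$ whose core is its lattice of simple elements.

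So I would write: let $\mathcal{L}\coloneqq(M_{\mathrm{dual}}(W,S),\leq_L)$. By \cref{thm:dual_braid}, $M_{\mathrm{dual}}(W,S)$ is a homogeneous Garside monoid, so by \cref{lem:garside}, $\mathcal{L}$ is an upho lattice whose core is the lattice of simple elements of $M_{\mathrm{dual}}(W,S)$. Again by \cref{thm:dual_braid}, this lattice of simple elements is (isomorphic to) $\mathrm{NC}(W)$. Hence $\mathrm{NC}(W)$ is the core of the upho lattice $\mathcal{L}$, as desired.

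There is no real obstacle here: the heavy lifting has already been done by \cref{thm:dual_braid}, whose proof (due to Birman--Ko--Lee, Brady--Watt, and Bessis) is highly nontrivial but is being cited as a black box. The only thing worth spelling out carefully is that the presentation $\langle \mathbf{T} \mid \mathbf{t}\mathbf{s} = \mathbf{s}\mathbf{t}^{\mathbf{s}}\rangle$ is indeed homogeneous (all defining relations equate words of length two), which ensures that the ``homogeneous'' hypothesis of \cref{lem:garside} is met and that $(M_{\mathrm{dual}}(W,S),\leq_L)$ is a finite type $\mathbb{N}$-graded poset with rank function equal to word length. One could also remark, for concreteness, on the example $W=S_3$ depicted in \cref{fig:monoid} from the introduction, where the dual braid monoid $\langle a,b,c\mid ab=bc=ca\rangle$ indeed has core equal to the noncrossing partition lattice of $S_3$ shown in red in \cref{fig:noncrossing}.
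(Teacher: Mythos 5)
Your proposal is correct and is exactly the paper's argument: the corollary is deduced by combining \cref{thm:dual_braid} (the dual braid monoid is a homogeneous Garside monoid whose simple elements form the noncrossing partition lattice) with \cref{lem:garside} (a homogeneous Garside monoid under left divisibility is an upho lattice with core its lattice of simple elements). Your added remark that the defining relations $\mathbf{t}\mathbf{s}=\mathbf{s}\mathbf{t}^{\mathbf{s}}$ are length-homogeneous, so the hypothesis of \cref{lem:garside} is genuinely met, is a reasonable small point to spell out but does not change the route.
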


\begin{example}
In \cref{fig:monoid} we depict the noncrossing partition lattice for the symmetric group~$S_3$. For ease of display, we have written $a=\mathbf{t_{1,2}}$, $b=\mathbf{t_{2,3}}$, and $c=\mathbf{t_{1,3}}$ in this figure. Observe how the noncrossing partition lattice from \cref{ex:absolute_s3} is the core of this upho lattice. We also note that the noncrossing partition lattice of $S_3$ happens to be isomorphic to the partition lattice $\Pi_3$, since no two subsets of $\{1,2,3\}$ are crossing. Hence, \cref{fig:monoid} and \cref{fig:partitions} depict two different upho lattices with isomorphic cores.
\end{example}

\begin{remark}
There is no product formula for the characteristic polynomial of the noncrossing partition lattice of a finite Coxeter group~$W$, although there is an expression as a sum over the faces of the ``positive part of the cluster complex.'' See, e.g., the recent paper~\cite{josuatverges2023koszulity} for discussion of the characteristic polynomial of noncrossing partition lattices.
\end{remark}

\section{Obstructions for cores of upho lattices} \label{sec:obstruct}

In this section we explore techniques for showing that a given finite graded lattice is \emph{not} the core of any upho lattice. The relationship between the rank generating function of an upho lattice and the characteristic polynomial of its core implies some characteristic polynomial obstructions for being a core. There are also structural obstructions, which say that a finite lattice must be partly self-similar to be a core. These obstructions let us to show that many well-studied lattices are not cores.

\subsection{Characteristic polynomial obstructions} 

The following is an immediate corollary of \cref{cor:upho_lat_rgf}.

\begin{lemma} \label{lem:positivity}
Let $L$ be a finite graded lattice which is the core of some upho lattice. Then the coefficients of the formal power series $\chi^*(L;x)^{-1}$ are all nonnegative.
\end{lemma}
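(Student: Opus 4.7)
The plan is essentially to unwind the statement of \cref{cor:upho_lat_rgf}. Suppose $L$ is the core of some upho lattice $\mathcal{L}$. Then \cref{cor:upho_lat_rgf} tells us directly that
\[ \chi^*(L;x)^{-1} = F(\mathcal{L};x). \]
So the goal is just to observe that the right-hand side has nonnegative coefficients.

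To that end, I would recall the definition $F(\mathcal{L};x) = \sum_{p \in \mathcal{L}} x^{\rho(p)}$. Grouping elements by rank, the coefficient of $x^n$ in this formal power series is precisely $\#\{p \in \mathcal{L} \colon \rho(p) = n\}$, which is a nonnegative integer (in fact it is finite, since $\mathcal{L}$ is finite type $\mathbb{N}$-graded by our standing assumption on upho lattices). Combined with the equation above, this yields the claim.

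There is no real obstacle here; the result is packaging together \cref{cor:upho_lat_rgf} with the trivial observation that a rank generating function has nonnegative coefficients. The only thing to mention, for completeness, is that $\chi^*(L;x)$ has constant term $\mu(\hat{0},\hat{0}) = 1$, so its multiplicative inverse in $\mathbb{Z}[[x]]$ is well-defined and agrees with the power series $F(\mathcal{L};x)$ of nonnegative integers produced above.
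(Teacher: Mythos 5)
Your proof is correct and is exactly the argument the paper intends: the lemma is stated as an immediate corollary of \cref{cor:upho_lat_rgf}, which gives $\chi^*(L;x)^{-1} = F(\mathcal{L};x)$, a rank generating function with nonnegative (indeed finite, by the finite type hypothesis) coefficients. Your added remark that the constant term of $\chi^*(L;x)$ is $1$, so the inverse is well-defined in $\mathbb{Z}[[x]]$, is a reasonable bit of completeness the paper leaves implicit.
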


Already \cref{lem:positivity} lets us rule out many lattices as cores. The following is another similar, but slightly different, characteristic polynomial obstruction.

\begin{lemma} \label{lem:positivity2}
Let $L$ be a finite graded lattice which is the core of some upho lattice. Then, for any integer $m \geq 1$, the coefficients of the formal power series $\chi^*(L;x^m)/\chi^*(L;x)^m$ are all nonnegative.
\end{lemma}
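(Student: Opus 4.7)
The plan is to interpret the formal power series $\chi^*(L;x^m)/\chi^*(L;x)^m$ as the rank generating function of a certain subset of $\mathcal{L}^m$, where $\mathcal{L}$ is an upho lattice with core $L$; once such an interpretation is in hand, nonnegativity of the coefficients will be immediate.

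The first step will be to pass to the $m$-fold direct product $\mathcal{L}^m$, which is itself an upho lattice. Since $F$ and $\chi^*$ both respect direct products, \cref{cor:upho_lat_rgf} gives $\chi^*(L;x)^{-m}=F(\mathcal{L}^m;x)$, so it suffices to prove that $\chi^*(L;x^m)\cdot F(\mathcal{L}^m;x)$ has nonnegative coefficients. The second step is to exploit the diagonal embedding $\Delta\colon L\to \mathcal{L}^m$, $p\mapsto (p,\ldots,p)$, which multiplies rank by $m$. Because $\mathcal{L}^m$ is upho, each principal order filter $V_{\Delta(p)}\subseteq \mathcal{L}^m$ is isomorphic to $\mathcal{L}^m$, and therefore $\sum_{q\in V_{\Delta(p)}} x^{\rho(q)} = x^{m\rho(p)}\cdot F(\mathcal{L}^m;x)$. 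Substituting this into $\chi^*(L;x^m)\cdot F(\mathcal{L}^m;x) = \sum_{p\in L}\mu(\hat 0,p)\,x^{m\rho(p)}\cdot F(\mathcal{L}^m;x)$ and swapping the order of summation will yield
\[
\chi^*(L;x^m)\cdot F(\mathcal{L}^m;x)=\sum_{q\in \mathcal{L}^m} x^{\rho(q)}\sum_{\substack{p\in L\\ \Delta(p)\leq q}} \mu(\hat{0},p).
\]

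For the third step, observe that the inner sum ranges over $p\in L$ with $p\leq q_i$ for every $i$, equivalently over $p\leq r_q\coloneqq \hat 1_L\wedge q_1\wedge\cdots\wedge q_m\in L$. Since $L=[\hat 0,\hat 1_L]$ is an interval in $\mathcal{L}$, the intervals $[\hat 0,r_q]$ computed in $L$ and in $\mathcal{L}$ coincide, so the defining recursion for the M\"obius function gives $\sum_{p\leq r_q}\mu(\hat 0,p)=[r_q=\hat 0]$. Combining everything, I will conclude that
\[
\frac{\chi^*(L;x^m)}{\chi^*(L;x)^m}=\sum_{\substack{q\in\mathcal{L}^m\\ \hat 1_L\wedge q_1\wedge\cdots\wedge q_m=\hat 0}} x^{\rho(q)},
\]
which manifestly has nonnegative coefficients. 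There is no real obstacle along the way; the only technical points to verify are that the upho isomorphism $V_{\Delta(p)}\cong\mathcal{L}^m$ is rank-preserving up to the shift $m\rho(p)$, and that M\"obius values computed in $L$ and in $\mathcal{L}$ agree on $L$ — both of which are immediate from the fact that $L$ is a principal order ideal of $\mathcal{L}$.
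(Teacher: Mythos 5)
Your argument is correct, and it amounts to a self-contained proof of the identity that the paper simply cites: the paper's proof of \cref{lem:positivity2} invokes [Corollary 14] of the earlier note \cite{hopkins2022note}, which states that $\chi^*(L;x^m)/\chi^*(L;x)^m$ equals $\sum x^{\rho(p_1)+\cdots+\rho(p_m)}$ over $m$-tuples $(p_1,\ldots,p_m)\in\mathcal{L}^m$ with $p_1\wedge\cdots\wedge p_m=\hat{0}$, and then observes that the left-hand side manifestly has nonnegative coefficients. Your derivation via the diagonal embedding of $L$ into the upho lattice $\mathcal{L}^m$ and M\"obius inversion is exactly the mechanism behind that cited corollary, so the mathematical content is the same; what your write-up buys is that the lemma no longer rests on an external reference. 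Two small points worth noting. First, your indexing condition $\hat{1}_L\wedge q_1\wedge\cdots\wedge q_m=\hat{0}$ is equivalent to the paper's condition $q_1\wedge\cdots\wedge q_m=\hat{0}$: any element of $\mathcal{L}$ strictly above $\hat{0}$ lies above some atom, and every atom lies below $\hat{1}_L$, so meeting with $\hat{1}_L$ cannot turn a nonzero meet into $\hat{0}$. Second, the two technical verifications you flag at the end do go through: principal order filters in a finite type $\mathbb{N}$-graded poset are graded with rank function shifted by the rank of the generator (covers in an order filter agree with covers in the ambient poset), and M\"obius values on $[\hat{0},p]$ for $p\in L$ agree whether computed in $L$ or in $\mathcal{L}$ because $L=[\hat{0},\hat{1}_L]$ is an interval of $\mathcal{L}$.
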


\begin{proof}
In~\cite[Corollary 14]{hopkins2022note} we proved that for an upho lattice $\mathcal{L}$ with core $L$,
\[\sum_{\substack{(p_1,\ldots,p_m) \in \mathcal{L}^m,\\ p_1 \wedge \cdots \wedge p_m = \hat{0}}}x^{\rho(p_1) + \cdots + \rho(p_m)} = \frac{\chi^*(L;x^m)}{\chi^*(L;x)^m}.\]
The left side of this equation evidently has all nonnegative coefficients, so the right must as well.
\end{proof}

\begin{remark}
Joel Lewis pointed out to me that the proof of \cite[Corollary 14]{hopkins2022note} in fact shows
\[\sum_{\substack{(p_1,\ldots,p_m) \in \mathcal{L}^m,\\ p_1 \wedge \cdots \wedge p_m = \hat{0}}}x_1^{\rho(p_1)}x_2^{\rho(p_2)}\cdots x_m^{\rho(p_m)} = \frac{\chi^*(L;x_1x_2\cdots x_m)}{\chi^*(L;x_1)\chi^*(L;x_2) \cdots \chi^*(L;x_m)}.\]
So, we can say more strongly that, for any $m \geq 1$, the coefficients of the multivariate formal power series $\chi^*(L;x_1x_2\cdots x_m)/(\chi^*(L;x_1)\chi^*(L;x_2) \cdots \chi^*(L;x_m))$ must all be nonnegative for any finite graded lattice $L$ which is a core.
\end{remark}

In the remainder of this subsection, we use these characteristic polynomial obstructions to show that various finite graded lattices are not cores.

\subsubsection{Face lattices of polytopes}

Recall that a polytope $P$ is the convex hull of finitely many points in Euclidean space $\mathbb{R}^n$. A face of $P$ is a subset of points $F\subseteq P$ which maximizes some linear functional. The dimension $\mathrm{dim}(F)$ of a face $F$ is the dimension of its affine span. By convention, the empty set $\varnothing$ is also considered a face, called the empty face, with $\mathrm{dim}(\varnothing)=-1$. The \dfn{face lattice} $L(P)$ of $P$ is the poset of faces of $P$ partially ordered by containment. The face lattice is a finite graded lattice, with the rank of a face $F$ being $\mathrm{dim}(F)+1$.

For these basic facts about polytopes and their faces, see~\cite[Ch.~2]{ziegler1995polytopes}.

Any face of a polytope $P$ is a convex hull of vertices (zero-dimensional faces). So we can identify the face lattice $L(P)$ with a poset of certain subsets of vertices ordered by containment. 

\begin{example}
\Cref{fig:square} depicts the face lattice of a square whose vertices are $a,b,c,d$ in clockwise order. In this figure, the faces are represented by subsets of these vertices, and the sets are written in a compact way without braces or commas.
\end{example}

\begin{figure}
\begin{tikzpicture}
\node (A) at (0,0) {$\varnothing$};
\node (B) at (-1.5,1) {$a$};
\node (C) at (-0.5,1) {$b$};
\node (D) at (0.5,1) {$d$};
\node (E) at (1.5,1) {$c$};
\node (F) at (-1.5,2) {$ab$};
\node (G) at (-0.5,2) {$ad$};
\node (H) at (0.5,2) {$bc$};
\node (I) at (1.5,2) {$cd$};
\node (J) at (0,3) {$abcd$};
\draw (B) -- (A) -- (C) -- (A) -- (D) -- (A) -- (E);
\draw (B) -- (F) -- (C) -- (H) -- (E) -- (I) -- (D) -- (G) -- (B);
\draw (F) -- (J) -- (G) -- (J) -- (H) -- (J) -- (I);
\end{tikzpicture}
\caption{The face lattice of a square.} \label{fig:square}
\end{figure}
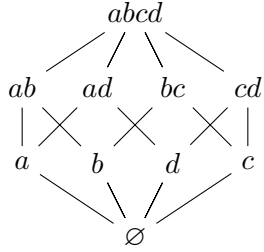

Notice that the face lattice of a $n$-dimensional simplex is isomorphic to the Boolean lattice $B_{n+1}$, which we know is a core. Hence, it is reasonable to ask which other face lattices of polytopes are cores. We do not know if the face lattice of a square is a core. But, going up a dimension, we can show that the face lattice of  an octahedron is not a core.

\begin{prop} \label{prop:octa}
Let $P$ be an octahedron. Then its face lattice $L(P)$ is not the core of any upho lattice.
\end{prop}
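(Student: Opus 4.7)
My plan is to apply the characteristic polynomial obstruction \cref{lem:positivity}: compute $\chi^*(L(P); x)$ explicitly and exhibit a negative coefficient in the reciprocal power series $\chi^*(L(P); x)^{-1}$, contradicting the hypothesis that $L(P)$ is a core.

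For the first step, the octahedron $P$ has $f$-vector $(6, 12, 8)$, and every proper face of $P$ is a simplex, so each lower interval $[\varnothing, F] \subseteq L(P)$ at a proper face $F$ is a Boolean lattice, giving $\mu(\varnothing, F) = (-1)^{\dim(F)+1}$. The identity $\sum_{F \leq P} \mu(\varnothing, F) = 0$ then forces $\mu(\varnothing, P) = 1$, so
\[
\chi^*(L(P); x) = 1 - 6x + 12 x^2 - 8 x^3 + x^4.
\]

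For the second step, write $\chi^*(L(P); x)^{-1} = \sum_{n \geq 0} c_n x^n$, which satisfies the linear recurrence $c_n = 6 c_{n-1} - 12 c_{n-2} + 8 c_{n-3} - c_{n-4}$ with $c_0 = 1$. Iterating produces $c_1, c_2, \ldots$ all positive at first, but after finitely many steps a specific $c_n$ becomes negative, whereupon \cref{lem:positivity} rules out $L(P)$ being a core. More conceptually, one can factor $\chi^*(L(P); x) = (1-x)(1 - 5x + 7 x^2 - x^3)$; computing the (negative) discriminant of the cubic factor $g(x) = 1 - 5x + 7x^2 - x^3$ shows it has one real root $\alpha$ and a complex conjugate pair $\beta, \bar\beta$, while the sign changes $g(1) > 0$ and $g(7) < 0$ locate $\alpha \in (1, 7)$, so Vieta's relation $\alpha \cdot |\beta|^2 = 1$ gives $|\beta| < 1$. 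Thus the smallest-modulus roots of $\chi^*(L(P); x)$ are the non-real pair $\beta, \bar\beta$, and by Pringsheim's theorem the Taylor coefficients of $\chi^*(L(P); x)^{-1}$ cannot all be nonnegative.

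The main obstacle is certifying the sign change. The direct recurrence approach is routine but not illuminating, since the first negative $c_n$ only appears after enough iterations for the complex-conjugate-pole oscillation to dominate the contributions from the real poles at $x = 1$ and $x = \alpha$; the root-analysis route is cleaner in principle but shifts the work to computing the cubic discriminant and bounding the real root.
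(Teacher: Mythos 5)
Your proposal is correct, and it uses the same obstruction as the paper (\cref{lem:positivity} applied to $\chi^*(L(P);x)=1-6x+12x^2-8x^3+x^4$), but it certifies the negative coefficient by a genuinely different and more self-contained route. The paper simply reports a SageMath computation: $[x^{13}]\,\chi^*(L(P);x)^{-1}=-123704$. You instead (i) derive the characteristic polynomial by hand from the $f$-vector $(6,12,8)$ and the fact that proper faces of the octahedron are simplices, with $\mu(\varnothing,P)=1$ from the M\"obius recursion --- all correct --- and (ii) prove abstractly that some coefficient of the inverse must be negative. Your factorization $\chi^*(L(P);x)=(1-x)(1-5x+7x^2-x^3)$ checks out, the cubic factor has discriminant $-44<0$ (so exactly one real root $\alpha$, located in $(1,7)$ by the sign changes you cite), and Vieta gives $|\beta|^2=1/\alpha<1$ for the conjugate pair; hence the unique singularities of $\chi^*(L(P);x)^{-1}$ of minimal modulus are non-real, and Pringsheim's theorem forbids all-nonnegative coefficients. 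Be aware that your first route --- ``iterate the recurrence until a $c_n$ goes negative'' --- is not a proof until you actually exhibit the index (the paper's $n=13$ does this); it is the Pringsheim argument that closes your proof. What each approach buys: the paper's computation is shorter and pinpoints the offending coefficient, while your argument is computer-free, explains \emph{why} a negative coefficient is inevitable (a dominant non-real pole pair), and generalizes to any candidate core whose reciprocal characteristic polynomial has its smallest-modulus roots off the positive real axis.
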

\begin{proof}
Using SageMath, we compute that $\chi^{*}(L(P);x) = 1-6x+12x^2-8x^3+x^4$ and that $[x^{13}] \chi^{*}(L(P);x)^{-1} = -123704$, where $[x^n]F(x)$ means the coefficient of $x^n$ in the power series $F(x)$. Hence, by \cref{lem:positivity}, $L(P)$ is not a core.
\end{proof}

See \cref{subsec:cross_hyper} below for an extension of \cref{prop:octa} to all cross-polytopes and hypercubes of dimension three or greater.

\subsubsection{Bond lattices of graphs}

Let $G$ be a connected, simple graph on the vertex set~$[n]$. A partition $\pi$ of $[n]$ is called $G$-connected if the induced subgraph of $G$ on each block of $\pi$ remains connected. The \dfn{bond lattice} $L(G)$ of $G$ is the restriction of the partition lattice~$\Pi_n$ to the $G$-connected partitions. The bond lattice $L(G)$ is a finite graded lattice; in fact, it is a geometric lattice, corresponding to the graphic matroid of~$G$. We have that $\chi^{*}(L(G);x) = x^n \cdot \chi(G;x^{-1})$ where $\chi(G;x)$ is the chromatic polynomial of $G$.

For these basic facts about bond lattices, see, e.g.,~\cite[\S2.3]{stanley2007hyperplane}. 

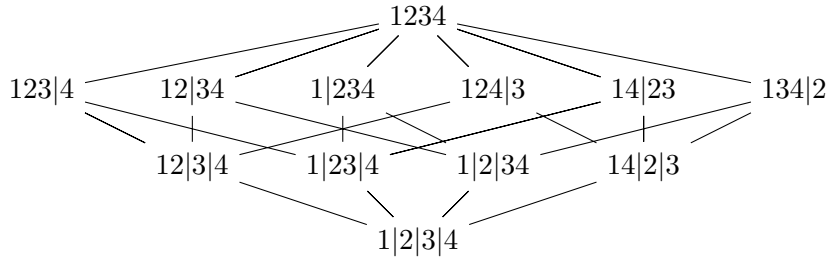
\begin{figure}
\begin{tikzpicture}
\node (A) at (0,0) {$1|2|3|4$};
\node (B) at (-3,1) {$12|3|4$};
\node (C) at (-1,1) {$1|23|4$};
\node (D) at (1,1) {$1|2|34$};
\node (E) at (3,1) {$14|2|3$};
\node (F) at (-5,2) {$123|4$};
\node (G) at (-3,2) {$12|34$};
\node (H) at (-1,2) {$1|234$};
\node (I) at (1,2) {$124|3$};
\node (J) at (3,2) {$14|23$};
\node (K) at (5,2) {$134|2$};
\node (L) at (0,3) {$1234$};
\draw (B) -- (A) -- (C) -- (A) -- (D) -- (A) -- (E);
\draw (F) -- (B) -- (G) -- (D) -- (K) -- (E) -- (J) -- (C) -- (F) -- (B) -- (I) -- (E) -- (J) -- (C) -- (H) -- (D);
\draw (F) -- (L) -- (G) -- (L) -- (H) -- (L) -- (I) -- (L) -- (J) -- (L) -- (K);
\end{tikzpicture}
\caption{The bond lattice of the cycle graph on four vertices.} \label{fig:c4}
\end{figure}

\begin{example}
For any $n \geq 1$, the cycle graph $C_n$ has vertex set~$[n]$ and edges $\{1,2\}, \{2,3\}, \ldots,\{n-1,n\}, \{n,1\}$. The bond lattice $L(C_4)$ is depicted in \cref{fig:c4}.
\end{example}

Notice that, for any $n \geq 1$, $\Pi_n=L(K_n)$ where $K_n$ is the complete graph on $[n]$. Since we know that $\Pi_n$ is a core, it is reasonable to ask which other graphs have bond lattices that are cores. We can show that the bond lattice of $C_4$ is not a core.

\begin{prop} \label{prop:c4}
The bond lattice $L(C_4)$ of the cycle graph $C_4$ on four vertices is not the core of any upho lattice.
\end{prop}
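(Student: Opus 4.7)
The plan is to apply the positivity obstruction of \cref{lem:positivity}. First I would compute $\chi^*(L(C_4);x)$ explicitly. This can be done by reading the Möbius values off \cref{fig:c4} directly: the minimum contributes $1$, each of the four atoms contributes $-x$, each of the six rank-two elements covers exactly two atoms and so contributes $+x^2$, and the top element has Möbius value $-(1 - 4 + 6) = -3$. Alternatively, the classical identity $\chi^*(L(G);x) = x^n \chi(G; 1/x)$ together with $\chi(C_4;x) = (x-1)^4 + (x-1)$ gives the same answer after a one-line simplification:
\[
\chi^*(L(C_4);x) = (1-x)^4 + x^3(1-x) = 1 - 4x + 6x^2 - 3x^3.
\]

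Next I would expand the formal power series $F(x) \coloneqq \chi^*(L(C_4);x)^{-1} = \sum_{n\geq 0} a_n x^n$ using the recurrence implicit in $F(x)\, \chi^*(L(C_4);x) = 1$, namely
\[
a_n = 4 a_{n-1} - 6 a_{n-2} + 3 a_{n-3}, \qquad a_0 = 1, \; a_{-1}=a_{-2}=0,
\]
and look for the first sign change. A direct iteration (easily verified by hand or by SageMath, as in the proof of \cref{prop:octa}) yields
\[
(a_0,a_1,a_2,a_3,a_4,a_5,a_6,a_7) = (1,\,4,\,10,\,19,\,28,\,28,\,1,\,-80).
\]
Since $a_7 = -80 < 0$, \cref{lem:positivity} forces the conclusion that $L(C_4)$ is not the core of any upho lattice.

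The argument is essentially mechanical once the characteristic polynomial is in hand, so there is no real ``hard part.'' What is worth remarking on is how deep one must search: despite $L(C_4)$ being a highly symmetric, rank-three geometric lattice with a very small $\chi^*$, the first negative coefficient of $F(x)$ appears only at rank seven, with the coefficients even collapsing to $a_6 = 1$ just before dropping negative. This illustrates that the positivity obstruction of \cref{lem:positivity} can be genuinely subtle, and it sets the stage for the generalization to the uniform matroids $U(k,n)$ with $2 < k < n$ mentioned after the proposition.
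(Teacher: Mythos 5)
Your proof is correct and follows exactly the paper's route: compute $\chi^*(L(C_4);x) = 1-4x+6x^2-3x^3$ and observe that $[x^7]\chi^*(L(C_4);x)^{-1} = -80 < 0$, so \cref{lem:positivity} applies. The only difference is that you carry out the coefficient recurrence by hand (and your intermediate values $1,4,10,19,28,28,1,-80$ check out), whereas the paper delegates the computation to SageMath.
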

\begin{proof}
Using SageMath, we compute that $\chi^{*}(L(C_4);x) = 1-4x+6x^2-3x^3$ and that $[x^7]\chi^{*}(L(C_4);x)^{-1} = -80$. So by \cref{lem:positivity}, $L(C_4)$ is not a core.
\end{proof}

See \cref{subsec:uniform_matroid} below for an extension of \cref{prop:c4} to all cycle graphs~$C_n$ for $n \geq 4$. In light of these (non-)examples, one reasonable guess is that the bond lattices which are cores are precisely those which are supersolvable. In~\cite{stanley1972supersolvable} Stanley proved that the supersolvable bond lattices are the ones coming from chordal graphs, where a graph is chordal if it has no induced $C_n$ for $n \geq 4$.

Before we move on to the structural obstructions, we might wonder if there is any lattice which can be proved to not be a core using \cref{lem:positivity2} instead of \cref{lem:positivity}. Let us give such an example.

\begin{figure}
\begin{tikzpicture}[scale=0.75]
\node[circle,fill,inner sep=1.5pt] (1) at (0,0) {};
\node[circle,fill,inner sep=1.5pt] (2) at (1,0.5) {};
\node[circle,fill,inner sep=1.5pt] (3) at (1,-0.5) {};
\node[circle,fill,inner sep=1.5pt] (4) at (2,0.5) {};
\node[circle,fill,inner sep=1.5pt] (5) at (2,-0.5) {};
\node[circle,fill,inner sep=1.5pt] (6) at (3,0.5) {};
\node[circle,fill,inner sep=1.5pt] (7) at (3,-0.5) {};
\node[circle,fill,inner sep=1.5pt] (8) at (4,0.5) {};
\node[circle,fill,inner sep=1.5pt] (9) at (4,-0.5) {};
\node[circle,fill,inner sep=1.5pt] (10) at (5,0.5) {};
\node[circle,fill,inner sep=1.5pt] (11) at (5,-0.5) {};
\node[circle,fill,inner sep=1.5pt] (12) at (6,0.5) {};
\node[circle,fill,inner sep=1.5pt] (13) at (6,-0.5) {};
\node[circle,fill,inner sep=1.5pt] (14) at (7,0.5) {};
\node[circle,fill,inner sep=1.5pt] (15) at (7,-0.5) {};
\node[circle,fill,inner sep=1.5pt] (16) at (8,0) {};
\draw (2) -- (1) -- (3) -- (2) -- (4) -- (5) -- (3) -- (5) -- (7) -- (6) -- (4) -- (6) -- (8) -- (9) -- (7) -- (9) -- (11) -- (10) -- (8) -- (10) -- (12) -- (13) -- (11) -- (13) -- (15) -- (14) -- (12) -- (14) -- (16) -- (15);
\end{tikzpicture}
\caption{The graph from \cref{prop:pos2_ex}.} \label{fig:pos2_ex}
\end{figure}

\begin{prop} \label{prop:pos2_ex}
Let $G$ be the graph on $16$ vertices depicted in \cref{fig:pos2_ex}. Then its bond lattice $L(G)$ is not the core of any upho lattice.
\end{prop}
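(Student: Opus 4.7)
The plan is to apply \cref{lem:positivity2} with $m=2$, since this proposition is explicitly framed as an example showing that \cref{lem:positivity2} is strictly stronger than \cref{lem:positivity}. Two checks are needed: first, that \cref{lem:positivity} fails to rule out $L(G)$ as a core; and second, that $\chi^{*}(L(G);x^{2})/\chi^{*}(L(G);x)^{2}$ has at least one negative coefficient. Both rest on an explicit formula for $\chi^{*}(L(G);x)$ coming from the chordal-like structure of $G$.

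First I would compute the chromatic polynomial. Inspecting \cref{fig:pos2_ex}, the graph $G$ decomposes as a triangle on $\{1,2,3\}$, a chain of six $4$-cycles glued consecutively along a shared edge (namely $\{2,3,4,5\},\{4,5,6,7\},\ldots,\{12,13,14,15\}$), and a closing triangle on $\{14,15,16\}$. Every two consecutive pieces meet in a single edge ($K_2$), so the clique-gluing identity $\chi(G_1\cup_{K} G_2;x)=\chi(G_1;x)\chi(G_2;x)/\chi(K;x)$, together with $\chi(C_4;x)=(x-1)^{4}+(x-1)=x(x-1)(x^{2}-3x+3)$ and $\chi(K_3;x)=x(x-1)(x-2)$, gives
\[
\chi(G;x)=x(x-1)(x-2)^{2}(x^{2}-3x+3)^{6}.
\]
Reversing the coefficients (using $\chi(G;x)=x\cdot\chi(L(G);x)$) then yields
\[
\chi^{*}(L(G);x)=(1-x)(1-2x)^{2}(1-3x+3x^{2})^{6}.
\]

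Next I would expand $\chi^{*}(L(G);x)^{-1}$ as a formal power series and verify that all its coefficients are nonnegative, confirming that \cref{lem:positivity} is silent. The individual factor $(1-3x+3x^{2})^{-1}$ already has negative coefficients (the recurrence $a_{n}=3a_{n-1}-3a_{n-2}$ with $a_0=1,a_1=3$ produces $a_{6}=-27$), so the smoothing effect of $(1-x)^{-1}(1-2x)^{-2}$ must compensate in every degree; the dominant pole at $x=1/2$ guarantees eventual positivity, and only a finite initial segment needs to be checked directly in SageMath. Then, for $m=2$, I would compute
\[
\frac{\chi^{*}(L(G);x^{2})}{\chi^{*}(L(G);x)^{2}}=\frac{(1+x)(1-2x^{2})^{2}(1-3x^{2}+3x^{4})^{6}}{(1-x)(1-2x)^{4}(1-3x+3x^{2})^{12}}
\]
(after cancelling one factor of $1-x$), expand it to sufficiently high order, and exhibit a specific negative coefficient. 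By \cref{lem:positivity2}, this certifies that $L(G)$ is not the core of any upho lattice.

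The main obstacle is simply locating the negative coefficient: the example has evidently been engineered so that the nonnegativity of $\chi^{*}(L(G);x)^{-1}$ is tight and the failure in the $m=2$ series appears only at a moderately high degree, so a computer algebra check is the practical route; one would also want a short asymptotic argument (via the dominant pole) to reduce the nonnegativity verification for $\chi^{*}(L(G);x)^{-1}$ to a finite computation. Should $m=2$ unexpectedly fail to produce a negative coefficient, the same calculation can be iterated for $m=3,4,\dots$.
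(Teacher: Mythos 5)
Your proposal is correct and takes essentially the same approach as the paper: compute $\chi^{*}(L(G);x)=(1-x)(1-2x)^{2}(1-3x+3x^{2})^{6}$, check that $\chi^{*}(L(G);x)^{-1}$ has all nonnegative coefficients so that \cref{lem:positivity} is inconclusive, and then apply \cref{lem:positivity2} with $m=2$ by exhibiting a negative coefficient (the paper finds $[x^{24}]\,\chi^{*}(L(G);x^{2})/\chi^{*}(L(G);x)^{2}=-269758375958758$). Your clique-gluing derivation of the characteristic polynomial and the dominant-pole argument (double pole at $x=1/2$ versus complex roots of modulus $1/\sqrt{3}$) for reducing the nonnegativity check to a finite computation are correct supplements to what the paper delegates entirely to SageMath, but the underlying argument is identical.
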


\begin{proof}
Using SageMath, we compute $\chi^{*}(L(G);x) = (1-x)(1-2x)^2(1-3x+3x^2)^6$. We also compute that $\chi^{*}(L(G);x)^{-1}$ has all positive coefficients, so \cref{lem:positivity} does not apply. However, $[x^{24}]\chi^{*}(L(G);x^2)/\chi^{*}(L(G);x)^2 = -269758375958758$, so by \cref{lem:positivity2} with $m=2$, $L(G)$ is not a core.
\end{proof}

\subsection{Structural obstructions} 

The following proposition is an immediate consequence of the definition of the core of an upho lattice, but it is worth recording since it rules out many finite graded lattices as cores. (For instance, it implies the only \emph{distributive} lattices which are cores are the Boolean lattices~$B_n$; see~\cite[\S3.4]{stanley2012ec1}.)

\begin{prop} \label{prop:max}
Let $L$ be a finite graded lattice which is the core of some upho lattice. Then its maximum $\hat{1} \in L$ is the join of its atoms $s_1,\ldots,s_r \in L$.
\end{prop}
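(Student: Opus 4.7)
The proof should be essentially a matter of unwinding the definition of the core. Let $\mathcal{L}$ be an upho lattice and suppose $L$ is (isomorphic to) its core, so by definition $L = [\hat{0}, s_1 \vee \cdots \vee s_r]$ where $s_1,\ldots,s_r$ are the atoms of $\mathcal{L}$. The maximum of $L$ is $\hat{1}_L = s_1 \vee \cdots \vee s_r$ by construction. So the entire content of the proposition is that the $s_i$, which are atoms of $\mathcal{L}$, are precisely the atoms of $L$ viewed as a lattice in its own right.

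The plan is to verify this by checking that covering relations at the bottom of $L$ coincide with covering relations at the bottom of $\mathcal{L}$. Since $L$ is a lower interval $[\hat{0}_{\mathcal{L}}, \hat{1}_L]$ of $\mathcal{L}$, for any $x \in L$ the set of elements $y \in \mathcal{L}$ with $\hat{0} \leq y \leq x$ is the same as the corresponding set computed in $L$ (any such $y$ satisfies $y \leq x \leq \hat{1}_L$, hence $y \in L$). In particular, $x$ covers $\hat{0}$ in $L$ if and only if $x$ covers $\hat{0}$ in $\mathcal{L}$, i.e., the atoms of $L$ are exactly those atoms of $\mathcal{L}$ that happen to lie in $L$. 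Because each $s_i$ satisfies $s_i \leq s_1 \vee \cdots \vee s_r = \hat{1}_L$, every atom of $\mathcal{L}$ lies in $L$, so the atoms of $L$ are precisely $s_1,\ldots,s_r$.

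Combining these two observations, $\hat{1}_L = s_1 \vee \cdots \vee s_r$ is the join of the atoms of $L$, which is the desired conclusion. There is no real obstacle here; the proposition is essentially tautological once one notices that the definition of ``core'' builds in the fact that its top element is the join of the atoms of the ambient upho lattice, and that those atoms coincide with the atoms of the core as an abstract lattice.
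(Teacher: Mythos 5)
Your proof is correct and matches the paper's treatment: the paper gives no separate argument, calling the proposition an immediate consequence of the definition of the core, which is exactly the unwinding you carry out. Your verification that the atoms of $\mathcal{L}$ coincide with the atoms of $L$ (because $L$ is a lower interval containing all atoms of $\mathcal{L}$) is the one small point worth spelling out, and you spell it out correctly.
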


In fact, using \cref{prop:max} we can show that the collection of finite lattices which are cores of upho lattices is not closed under duality.

\begin{example} \label{ex:dual}
Let $L$ be the finite lattice depicted in \cref{fig:dual}. Its dual $L^*$ cannot be the core of any upho lattice due to \cref{prop:max}. But Joel Lewis explained to me that $L$ itself actually \emph{is} a core. I thank him for letting me include this explanation here. Define the monoid $M \coloneqq \langle a,b,c \mid aa = bb, ba = ca\rangle$. It can be shown (e.g., using the techniques of~\cite{hopkins2024upho2}) that $M$ satisfies the conditions of \cref{lem:monoid}, so that $\mathcal{L}\coloneqq (M,\leq_L)$ is an upho lattice, whose core is $L$.
\end{example}

\begin{figure}
\begin{tikzpicture}
\node[circle,fill,inner sep=1.5pt] (1) at (0,0) {};
\node[circle,fill,inner sep=1.5pt] (2) at (-1,1) {};
\node[circle,fill,inner sep=1.5pt] (3) at (0,1) {};
\node[circle,fill,inner sep=1.5pt] (4) at (1,1) {};
\node[circle,fill,inner sep=1.5pt] (5) at (-0.5,2) {};
\node[circle,fill,inner sep=1.5pt] (6) at (0.5,2) {};
\node[circle,fill,inner sep=1.5pt] (7) at (0,3) {};
\draw (2) -- (1) -- (3) -- (1) -- (4);
\draw (2) -- (5) -- (3) -- (6) -- (4);
\draw (5) -- (7) -- (6);
\end{tikzpicture}
\caption{The lattice $L$ from \cref{ex:dual} which is a core but whose dual $L^*$ is not a core.} \label{fig:dual}
\end{figure}
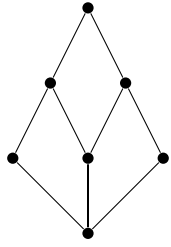

\Cref{prop:max} says something about the join of the elements covering $\hat{0}$ in a core~$L$. Looking at the join of the elements covering an arbitrary element $x\in L$ is a good idea, and leads to other, nontrivial obstructions for cores. The following lemma says that a core must already be ``partly self-similar'' in order to fit into an upho lattice.

\begin{lemma} \label{lem:structural}
Let $L$ be a finite graded lattice which is the core of some upho lattice. Let $x \in L\setminus \{\hat{0},\hat{1}\}$, and let $y_1,\ldots,y_k\in L$ be the elements covering $x$. Then there is a rank-preserving embedding of the interval $[x,y_1\vee \cdots \vee y_k]$ into $L$. 
\end{lemma}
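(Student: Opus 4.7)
The plan is to use the defining upho isomorphism to transport the upper interval above $x$ down to an interval based at $\hat{0}_\mathcal{L}$, and then observe that the relevant piece of this transported interval lies inside the core $L$. Let $\mathcal{L}$ be an upho lattice with core $L = [\hat{0}, s_1\vee\cdots\vee s_r]$, where $s_1,\ldots,s_r$ are the atoms of $\mathcal{L}$. By the upho property, choose an isomorphism $\phi\colon V_x\to\mathcal{L}$ from the principal order filter of $x$ onto $\mathcal{L}$. Since $L$ is an order-convex subset of $\mathcal{L}$, cover relations among elements of $L$ coincide with cover relations in $\mathcal{L}$, so each $y_i$ is an atom of $V_x$. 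Therefore $\phi(y_1),\ldots,\phi(y_k)$ are $k$ distinct atoms of $\mathcal{L}$, i.e., members of $\{s_1,\ldots,s_r\}$.

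Since $\phi$ is a lattice isomorphism,
\[\phi(y_1\vee\cdots\vee y_k) \;=\; \phi(y_1)\vee\cdots\vee\phi(y_k),\]
which, being a join of atoms of $\mathcal{L}$, is bounded above by $s_1\vee\cdots\vee s_r=\hat{1}_L$. Hence $\phi$ restricts to an isomorphism
\[[x,\,y_1\vee\cdots\vee y_k] \;\xrightarrow{\;\sim\;}\; [\hat{0}_\mathcal{L},\,\phi(y_1)\vee\cdots\vee\phi(y_k)],\]
and the latter interval lies entirely inside $L$. Composing with the inclusion into $L$ gives the desired embedding.

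To finish, I would note that rank-preservation is automatic: $\phi$ is an isomorphism of $\mathbb{N}$-graded posets sending $x$ to $\hat{0}_\mathcal{L}$, so for any $z \in [x,\,y_1\vee\cdots\vee y_k]$ one has $\rho_\mathcal{L}(z) - \rho_\mathcal{L}(x) = \rho_L(\phi(z))$, which is exactly the rank-preservation statement once $x$ is regarded as the bottom of the source interval. There is no real obstacle; the argument is just an unwinding of the definitions of \emph{upho} and \emph{core}. The only subtle point to double-check is that the set of elements covering $x$ in $\mathcal{L}$ may strictly contain $\{y_1,\ldots,y_k\}$ (some covers of $x$ in $\mathcal{L}$ might not lie in $L$), so $\phi$ need not match up the full atom set of $V_x$ with the atoms of $\mathcal{L}$; however, the preceding paragraph only requires that $\phi(y_1),\ldots,\phi(y_k)$ be \emph{some} atoms of $\mathcal{L}$, which is all that is used.
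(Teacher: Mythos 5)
Your proof is correct and takes essentially the same approach as the paper, which likewise restricts the upho isomorphism $\varphi\colon V_x\to\mathcal{L}$ to the interval $[x,\,y_1\vee\cdots\vee y_k]$. The paper's version is just terser, leaving implicit the verification you spell out---that the $y_i$ map to atoms of $\mathcal{L}$, so their join maps below $s_1\vee\cdots\vee s_r$ and the image lands inside $L$.
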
 

Recall that ``rank-preserving embedding'' means a map $\varphi\colon [x,y_1\vee \cdots \vee y_k] \to L$ which is an isomorphism onto its image, and which satisfies $\rho(\varphi(z)) = \rho(z)$ for all~$z$. Notice in particular that under the embedding from \cref{lem:structural}, $x$ is mapped to the minimum~$\hat{0} \in L$ and $y_1,\ldots,y_k$ are mapped to atoms $s_1,\ldots,s_k\in L$.

\begin{proof}[Proof of \cref{lem:structural}]
Let $\mathcal{L}$ be an upho lattice whose core is $L$. By assumption, there is some isomorphism $\varphi\colon V_x \to \mathcal{L}$, where $V_x \subseteq \mathcal{L}$ is the principal order filter of~$\mathcal{L}$ generated by $x$. The desired embedding of $[x,y_1\vee \cdots \vee y_k]$ into $L$ is then given by the restriction of $\varphi$ to $[x,y_1\vee \cdots \vee y_k] \subseteq L$.
\end{proof}

In the remainder of this subsection, we use \cref{lem:structural} to show that various lattices are not cores. It appears that this structural obstruction is stronger than the characteristic polynomial obstructions from the previous subsection, although we do not know any precise statement to that effect.

\subsubsection{The face lattices of the cross polytope and hypercube} \label{subsec:cross_hyper}

Recall that the $n$-dimensional \dfn{cross polytope} is the convex hull of all permutations of vectors of the form $(\pm1,0,0,\ldots,0)\in \mathbb{R}^n$. The two dimensional cross polytope is the square and the three dimensional cross polytope is the octahedron.

\begin{thm}
Let $P$ be the $n$-dimensional cross polytope, for $n \geq 3$. Then its face lattice $L(P)$ is not the core of any upho lattice.
\end{thm}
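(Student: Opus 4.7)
The plan is to derive a contradiction from \cref{lem:structural} by choosing $x \in L(P)$ to be an atom, i.e., a single vertex of the cross polytope.

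First I determine the elements covering $x$: these are precisely the edges of $P$ through $x$, one for each vertex of $P$ not equal or antipodal to $x$, giving $2(n-1)$ covers. I claim their join in $L(P)$ is $\hat{1}$. Indeed, for any direction $j$ different from the direction of $x$ (such a $j$ exists since $n \geq 2$), the two edges joining $x$ to $e_j$ and to $-e_j$ are both among the covers of $x$, and any face containing both would have to contain the antipodal pair $\{e_j, -e_j\}$; but no proper face of a cross polytope contains an antipodal pair.

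Hence \cref{lem:structural} supplies a rank-preserving embedding $\varphi \colon [x,\hat{1}] \hookrightarrow L(P)$. The interval $[x,\hat{1}]$ has rank $(n+1)-1 = n$, so $\varphi(\hat{1})$ is a rank-$n$ element of $L(P)$, i.e., a facet of $P$. Moreover, the $2(n-1)$ atoms of $[x,\hat{1}]$ (the edges of $P$ through $x$) are mapped injectively to $2(n-1)$ distinct atoms of $L(P)$, namely $2(n-1)$ distinct vertices of $P$, all of which must lie below the facet $\varphi(\hat{1})$ by order-preservation.

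The contradiction is then elementary: every facet of the $n$-dimensional cross polytope is an $(n-1)$-simplex with exactly $n$ vertices, so it dominates exactly $n$ atoms of $L(P)$. Since $2(n-1) > n$ whenever $n \geq 3$, we cannot fit $2(n-1)$ distinct atoms below a single facet. The only nontrivial input required is the computation that the join of the covers of a vertex is $\hat{1}$; the rest of the argument is pure bookkeeping with \cref{lem:structural}, reflecting the intuition that the upper interval at a vertex of $L(P)$ is ``too wide at the top'' to fit back inside $L(P)$ itself.
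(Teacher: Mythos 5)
Your proof is correct and follows the same strategy as the paper: apply \cref{lem:structural} at an atom $x$, observe that the join of the elements covering $x$ is $\hat{1}$ (so the relevant interval is all of $[x,\hat{1}]$), and derive a counting contradiction against the fact that a facet of the cross polytope is an $(n-1)$-simplex. The only difference is which count you use: you compare the $2(n-1)$ atoms of $[x,\hat{1}]$ (edges through the vertex $x$) against the $n$ vertices of a facet, whereas the paper compares the $2^{n-1}$ elements covered by $\hat{1}$ in $[x,\hat{1}]$ (facets through $x$) against the $n$ elements covered by a coatom of $L(P)$; both inequalities hold exactly when $n\geq 3$, so either count closes the argument. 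Your explicit verification that the join of the covers of $x$ is $\hat{1}$ (via the antipodal pair $\{e_j,-e_j\}$) is a nice touch that the paper leaves implicit.
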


\begin{proof}
Concretely, the face lattice $L(P)$ can be represented as the poset on words of length $n$ in the alphabet $\{0,+,-\}$, with $w\leq w'$ if $w'$ is obtained from $w$ by changing some $0$'s to $\pm$'s, together with a maximum element $\hat{1}$. Let $x$ be any atom of $L(P)$. Then the join of the elements covering $x$ is $\hat{1}$. And  from the concrete description of~$L(P)$, we see that $\hat{1}$ covers $2^{n-1}$ elements in the interval $[x,\hat{1}]$. However, any coatom of $L(P)$ covers only $n$ elements. Since $n \geq 3$, we have $2^{n-1} > n$. Hence, for this choice of $x$, there can be no embedding of the kind required by \cref{lem:structural}, and so $L(P)$ is not a core.
\end{proof}

Recall that the $n$-dimensional \dfn{hypercube} is the convex hull of all vectors of the form $(\pm1, \pm1,\ldots, \pm1) \in \mathbb{R}^n$. The two dimensional hypercube is the square and the three dimensional hypercube is the usual cube. The hypercube is the polar dual of the cross polytope, meaning that their face lattices are dual to one another. However, we have seen in \cref{ex:dual} that this does not directly imply anything about whether the face lattice of the hypercube is a core. Nevertheless, we can also prove the following. 

\begin{thm}
Let $P$ be the $n$-dimensional hypercube, for $n \geq 3$. Then its face lattice $L(P)$ is not the core of any upho lattice.
\end{thm}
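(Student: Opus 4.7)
My plan is to apply \cref{lem:structural} to an atom $x$ of $L(P)$, i.e., a vertex of the hypercube, in imitation of the argument just used for cross polytopes. I will use the standard encoding of the non-empty faces of $P=[-1,1]^n$ by words $w \in \{0,+,-\}^n$, where $w_i = \pm$ means that the $i$-th coordinate is fixed to $\pm 1$ and $w_i=0$ means it is free in $[-1,1]$; under this encoding, $w \leq w'$ in $L(P)$ iff $w$ is obtained from $w'$ by changing some of its $0$'s into $\pm$'s. I take $x$ to be the vertex $+\cdots+$.

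The key observation is that the interval $[x,\hat{1}]$ in $L(P)$ is isomorphic to the Boolean lattice $B_n$: the faces above $x$ are exactly the words $w \in \{0,+\}^n$, which biject with subsets of $[n]$ via $w \mapsto \{i : w_i = 0\}$. The $n$ elements of $L(P)$ covering $x$ are the edges with a single $0$ coordinate, and their join in $L(P)$ equals $\hat{1}$, since the only face above all of them must have a $0$ in every coordinate.

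Now suppose for contradiction that $L(P)$ is the core of some upho lattice $\mathcal{L}$. Then \cref{lem:structural} produces a rank-preserving embedding of $[x,\hat{1}] \cong B_n$ into $L(P)$. Unpacking the proof of that lemma, this embedding is the restriction to $[x,\hat{1}]$ of an upho isomorphism $\varphi\colon V_x \to \mathcal{L}$, so it is actually an order isomorphism onto an interval $[\hat{0}, s_1 \vee \cdots \vee s_n]$ of $L(P)$, where $s_1,\ldots,s_n$ are the atoms of $\mathcal{L}$ obtained as images of the $n$ edges covering $x$. The top of this interval has rank $n$ in $L(P)$, so it must be a coatom of $L(P)$---i.e., a facet $F$ of $P$. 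But then $[\hat{0},F]$ is required to be isomorphic to $B_n$ and thus to have exactly $n$ atoms; since $F$ is itself an $(n-1)$-cube, the atoms of $[\hat{0}, F]$ are the $2^{n-1}$ vertices of $F$. For $n \geq 3$ we have $2^{n-1} > n$, yielding the desired contradiction.

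The step I expect to require the most care is verifying that the image of the embedding from \cref{lem:structural} is the full interval $[\hat{0}, s_1 \vee \cdots \vee s_n]$ rather than a proper subposet. This will follow because $\varphi$ is a lattice isomorphism and therefore carries $[x,\hat{1}]$ bijectively onto an interval in $\mathcal{L}$, and because any interval in $\mathcal{L}$ whose top is a join of atoms automatically lies inside the core $L(P)$.
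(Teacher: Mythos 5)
Your proof is correct, and it follows the paper's skeleton up to a point: both arguments apply \cref{lem:structural} at an atom $x$ (a vertex), identify $[x,\hat{1}]$ with $B_n$ via the vertex figure, and observe that the image of the resulting rank-preserving map must sit inside $[\hat{0},F]$ for a facet $F$. Where you diverge is in the concluding step. The paper uses only the \emph{embedding} guaranteed by the lemma's statement, and then rules out any rank-preserving embedding of $B_n$ into the face lattice of an $(n-1)$-cube by comparing joins of atoms (in $B_n$ any two atoms join at rank $2$, while $n$ vertices of an $(n-1)$-cube cannot be pairwise adjacent). You instead upgrade the lemma: by unpacking its proof you argue, correctly, that the restriction of the upho isomorphism $\varphi\colon V_x\to\mathcal{L}$ carries $[x,\hat{1}]$ isomorphically onto the \emph{full} lower interval $[\hat{0},s_1\vee\cdots\vee s_n]$, which lies in the core because its top is a join of atoms. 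That surjectivity is genuinely needed for your atom count ($n$ versus $2^{n-1}$) --- a mere embedding would only place $n$ vertices inside the facet, not force the facet to have only $n$ vertices --- and your justification of it is sound, since $\varphi$ is an order isomorphism and hence maps intervals onto intervals and preserves joins. The trade-off: your route needs this strengthening of \cref{lem:structural} beyond its literal statement, but in exchange the final contradiction becomes a trivial count of atoms rather than an argument about joins inside the $(n-1)$-cube's face lattice.
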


\begin{proof}
Let $x$ be any atom of $L(P)$. Then the join of the elements covering $x$ is $\hat{1}$, and the interval $[x,\hat{1}]$ is isomorphic to the face lattice of an $(n-1)$-dimensional simplex, i.e., to the Boolean lattice $B_n$. On the other hand, if~$t$ is any coatom of~$L(P)$, then the interval $[\hat{0},t]$ is isomorphic to the face lattice $L(P')$, where $P'$ is the $(n-1)$-dimensional hypercube. 

So the question is: can there be an embedding of $B_n$ into $L(P')$? And the answer is no, as long as $n \geq 3$. This is because for any $n$ atoms in $L(P')$, there must be at least two of these atoms whose join is the maximum (which has rank $n$), but the join of any two atoms in $B_n$ has rank~$2$. 

Hence, for this choice of $x$, there can be no embedding of the kind required by \cref{lem:structural}, and so $L(P)$ is not a core.
\end{proof}

\subsubsection{The lattice of flats of the uniform matroid} \label{subsec:uniform_matroid}

As mentioned, geometric lattices are exactly the same thing as lattices of flats of matroids. We do not want to review the entire theory of matroids here, but we will discuss one important family of matroids: the uniform matroids.\footnote{Do not confuse ``uniform'' here with the ``uniform sequences'' of lattices from \cref{subsec:uniform_seqs}.}

Concretely, for $2\leq k\leq n$, the lattice of flats of the \dfn{uniform matroid} $U(k,n)$ is obtained from the Boolean lattice~$B_n$ by removing all elements of rank $k$ or higher, and then adjoining a maximal element~$\hat{1}$. So the lattice of flats of $U(2,n)$ is the rank two lattice $M_n$, which we have seen is a core. And the lattice of flats of $U(n,n)$ is the Boolean lattice~$B_n$, which we have seen is a core. But in all other cases, the lattice of flats of the uniform matroid is not a core.

\begin{thm} \label{thm:uni}
For any $2 < k < n$, the lattice of flats of the uniform matroid $U(k,n)$ is not the core of any upho lattice.
\end{thm}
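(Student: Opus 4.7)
My plan is to apply the structural obstruction \cref{lem:structural} to any atom of $L$ and derive a contradiction via a simple pigeonhole count on atoms lying below a $(k-1)$-subset.

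First, I would unpack the structure of $L$: its proper (non-$\hat{1}$) elements are precisely the subsets of $[n]$ of size at most $k-1$ ordered by inclusion, and $\hat{1}$ sits directly above all the rank-$(k-1)$ elements. Fix the atom $x \coloneqq \{1\}$. Since $k \geq 3$, the rank-$2$ elements of $L$ are the $2$-subsets, so the elements of $L$ covering $x$ are exactly $\{1,2\}, \{1,3\}, \ldots, \{1,n\}$, giving $n-1$ covers $y_1, \ldots, y_{n-1}$. Their join in $L$ is $\hat{1}$: their set-theoretic union is all of $[n]$, which has size $n > k-1$ and therefore fails to be a proper element of $L$. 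Hence the interval appearing in \cref{lem:structural} is the entire upper interval $[x,\hat{1}]\subseteq L$.

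Next, I would confront the rank-preserving embedding promised by \cref{lem:structural}. The interval $[x,\hat{1}]$ has intrinsic rank $k-1$ (one less than the rank of $L$) and has $n-1$ atoms, namely the $y_i$. A rank-preserving embedding $\varphi\colon [x,\hat{1}] \to L$ must send the top of $[x,\hat{1}]$ to some rank-$(k-1)$ element of $L$, i.e., to a $(k-1)$-subset $B \subseteq [n]$. Since every atom of $[x,\hat{1}]$ lies below its top, each of the $n-1$ atoms $\varphi(y_i)$ must be a distinct atom of $L$ lying below $B$. But the atoms of $L$ below $B$ are exactly the singletons $\{b\}$ for $b \in B$, of which there are only $k-1$. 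The hypothesis $n>k$ gives $n-1 > k-1$, so this injection is impossible; this contradiction shows $L$ is not a core.

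The only step requiring any care is confirming that the join of the covers of $x$ collapses to $\hat{1}$, which is where the assumption $n > k$ (as opposed to $n = k$) is essential; with that verified, the rest of the argument is a one-line pigeonhole and there is no substantial obstacle. I expect the write-up to be brief, and the only notational subtlety will be being explicit about whether ``rank-preserving'' is measured intrinsically in $[x,\hat{1}]$ or relative to $L$ (either convention gives the same conclusion here, since the difference is a global shift by $1$).
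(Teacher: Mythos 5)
Your proposal is correct and follows essentially the same route as the paper: both apply \cref{lem:structural} at an atom $x$, observe that the join of the covers of $x$ is $\hat{1}$, and derive a pigeonhole contradiction from the rank-preserving embedding of $[x,\hat{1}]$ into $L$. The only difference is cosmetic --- you count the $n-1$ atoms of $[x,\hat{1}]$ against the $k-1$ atoms below the coatom $\varphi(\hat{1})$, whereas the paper counts the $\binom{n-1}{k-2}$ elements covered by $\hat{1}$ in $[x,\hat{1}]$ against the $k-1$ elements covered by a coatom of $L$ (and note that your parenthetical is slightly off: the join of the covers of $x$ collapses to $\hat{1}$ even when $n=k$; it is the final inequality $n-1>k-1$ that uses $n>k$).
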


\begin{proof}
Let $L$ be the lattice of flats of the uniform matroid $U(k,n)$ and let $x\in L$ be an atom. Then the join of the elements covering $x$ is $\hat{1}$. And $\hat{1}$ covers $\binom{n-1}{k-2}$ elements in the interval $[x,\hat{1}]$. However, any coatom in $L$ covers only $k-1$ elements. Since $2 < k < n$, we have $\binom{n-1}{k-2} > k-1$. Hence, for this choice of $x$, there can be no embedding of the kind required by \cref{lem:structural}, and so $L$ is not a core.
\end{proof}

\begin{cor}
For any $n \geq 4$, the bond lattice $L(C_n)$ of the cycle graph $C_n$ is not the core of any upho lattice.
\end{cor}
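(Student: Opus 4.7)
The plan is to derive this corollary directly from \cref{thm:uni} by recognizing the bond lattice $L(C_n)$ as the lattice of flats of the uniform matroid $U(n-1,n)$.

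First I would verify the matroid identification. The graphic matroid of $C_n$ has as its ground set the $n$ edges of $C_n$, with independent sets the acyclic edge subsets. Since the only subgraph of $C_n$ which is itself a cycle is $C_n$ in its entirety, every proper subset of the edge set of $C_n$ is acyclic. Hence the independent sets of the graphic matroid of $C_n$ are precisely all subsets of the ground set of size at most $n-1$, which is the defining property of the uniform matroid $U(n-1,n)$.

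Equivalently, at the level of bond lattices: a partition $\pi$ of $[n]$ is $C_n$-connected iff each block induces a connected subgraph of $C_n$, i.e., a subpath. Since $C_n$ has only one cycle (itself), the only way a block can be all of $[n]$ is if it equals $[n]$, and otherwise each block must be a contiguous arc on the cycle. Collapsing edges of $C_n$ one at a time gives us all the $C_n$-connected partitions with at least two blocks, which correspond exactly to proper subsets of the edge set, and then there is a single maximum element corresponding to the whole ground set. This matches the lattice of flats of $U(n-1,n)$ described in the excerpt, namely $B_n$ with everything of rank $\geq n-1$ removed and a new $\hat{1}$ adjoined.

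Finally, for $n \geq 4$ we have $2 < n-1 < n$, so \cref{thm:uni} applies with $k = n-1$, and we conclude that $L(C_n)$ is not the core of any upho lattice. There is no serious obstacle here; the entire argument is a straightforward reduction to \cref{thm:uni}, with the only content being the elementary matroid identification above.
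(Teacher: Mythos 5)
Your proof is correct and is essentially the paper's proof: the paper likewise observes that $L(C_n)$ is the lattice of flats of $U(n-1,n)$ and invokes the uniform matroid theorem with $k=n-1$, and your first paragraph supplies the standard matroid identification that the paper leaves implicit. One tiny imprecision in your second paragraph: the $C_n$-connected partitions with at least two blocks correspond to edge subsets of size at most $n-2$ (not all proper subsets, since a subset of $n-1$ edges already collapses everything to one block), but this does not affect the argument.
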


\begin{proof}
The bond lattice $L(C_n)$ is isomorphic to the lattice of flats of $U(n-1,n)$, so this follows immediately from \cref{thm:uni}.
\end{proof}

\bibliography{upho_lattices}{}
\bibliographystyle{abbrv}

\end{document}